\numberwithin{equation}{section}
\newcommand{\black}{\color{black}}
\newcommand{\cE}{{\mathcal E}}
\newcommand{\cC}{\mathcal C}
\newcommand{\cB}{\mathcal B}
\newcommand{\cA}{\mathcal A}
\newcommand{\cF}{\mathcal{F}}
\newcommand{\cH}{\mathcal{H}}
\newcommand{\cN}{\mathcal{N}}
\newcommand{\cL}{\mathcal{L}}
\newcommand{\cG}{\mathcal{G}}
\newcommand{\K}{\mathbb{O}}
\newcommand{\cM}{{\mathcal M}}
\newcommand{\N}{\mathbb{N}}
\newcommand{\R}{\mathbb{R}}
\renewcommand{\S}{\mathbb{S}}
\renewcommand{\P}{\mathbb{P}}
\newcommand{\hull}[1]{\mathrm{co}(#1)}
\newcommand{\functwo}{\mathfrak{F}_2}
\newcommand{\func}{\mathfrak{F}}
\newcommand{\sdist}{\tilde{\mathrm d}} 
\newcommand{\dist}{\mathrm{d}}  
\newcommand{\force}{\Upsilon}
\newcommand{\loc}{\mathrm{loc}}
\newcommand{\no}{\nonumber}
\newcommand{\strictlyincluded}{\subset\subset}
\newcommand{\res}{\mathop{\hbox{\vrule height 7pt width 0.5pt depth 0pt
\vrule height 0.5pt width 6pt depth 0pt}}\nolimits}
\newcommand{\argmin}{\mathop{\mathrm{argmin}}\,\,}
\newcommand{\p}{\partial}
\newcommand{\Per}{\mathrm{Per}}
\newcommand{\diam}{\mathrm{diam}}
\renewcommand{\hat}{\widehat}
\renewcommand{\tilde}{\widetilde}
\newcommand{\twofase}[4]{F_{#1}^{#2} (#3,#4)}
\theoremstyle{plain}
\newtheorem{theorem}{Theorem}[section]
\newtheorem{lemma}[theorem]{Lemma}
\newtheorem{definition}[theorem]{Definition}
\newtheorem{proposition}[theorem]{Proposition}
\newtheorem{corollary}[theorem]{Corollary}
\theoremstyle{definition}
\newtheorem{remark}[theorem]{Remark}
\date{\today}
\begin{document}
\title[]{Minimizing movements for forced anisotropic  mean curvature flow of partitions with mobilities}

\author{Giovanni Bellettini, Antonin Chambolle, Shokhrukh Kholmatov}

\address[G. Bellettini]{University of Siena, via Roma 56,  53100 Siena, Italy \&  International Centre for Theoretical Physics (ICTP), Strada Costiera 11, 34151 Trieste, Italy}
\email{bellettini@diism.unisi.it}

\address[A. Chambolle]{CMAP, Ecole Polytechnique,
91128 Palaiseau Cedex, France}
\email{antonin.chambolle@cmap.polytechnique.fr}

\address[Sh. Kholmatov]{University of Vienna,
Oskar-Morgenstern-Platz 1, 1090  Vienna, Austria}
\email{shokhrukh.kholmatov@univie.ac.at}

\begin{abstract}
Under suitable assumptions on the family of anisotropies, we prove the existence of a weak global $\frac{1}{n+1}$-H\"older continuous in time mean curvature flow with mobilities of a bounded anisotropic partition  in any dimension using the method of minimizing movements.  The result is extended to the case  when suitable driving forces are present. We improve the H\"older exponent to $\frac12$ in the case of partitions with the same anisotropy and the same mobility and provide a weak comparision result in this setting for  a weak anisotropic mean curvature flow of a partition and an anisotropic mean curvature two-phase flow.
\end{abstract}

\keywords{Mean curvature flow, partitions, minimizing movements, forcing, anisotropy, mobility}

\maketitle



\section{Introduction}

Many processes in material sciences such as phase transformation, crystal growth,  grain growth,  stress-driven rearrangement instabilities, {\it etc.} can be modelled as geometric interface motions, in which surface tensions act  as a principal driving force  (see 
e.g., \cite{Mullins:1956,Brakke:1978,TCH:1992,KL:2001} and references therein). A typical example of such a motion is anisotropic mean curvature flow: given a norm $\phi$ on $\R^n$ (called anisotropy), the equation for the anisotropic mean curvature flow of hypersurfaces parametrized as $\Gamma_t$ reads as 
\begin{equation}\label{mean_curvature_eq}
\beta(\nu)V = -{\rm div}_{\Gamma_t}[\nabla \phi(\nu)]\qquad \text{on $\Gamma_t,$} 
\end{equation}
where $V$ denotes the normal velocity of $\Gamma_t$ in the direction of the unit outer normal $\nu$ of $\Gamma_t$ and  $\beta $  is the mobility, a positive kinetic  coefficient \cite{GPR:2001}.  Anisotropic mean curvature flow is called crystalline provided the boundary of the Wulff shape  $W_\phi:=\{\phi\leq 1\}$ lies on finitely many hyperplanes; 
in this quite interesting case,  
equation \eqref{mean_curvature_eq} must be properly interpreted, 
due to the nondifferentiability of $\phi$; see for instance 
\cite{AT:1995,GG:1996,Taylor:1999,GG:2000,BNP:2001,BNP:2001.2,GP:2016,ChMP:2017,GP:2018,ChMNP:2019.ams,ChMNP:2019.apde}. 
Equation \eqref{mean_curvature_eq} (sometimes referred to as the two-phase evolution) can be generalized to the case of networks in the 
plane, and more generally to the case of partitions of space (sometimes called the multiphase case): here  the evolving sets are intrisically nonsmooth, since the presence of triple junctions (in the plane), or multiple lines, quadruple points etc. (in space)
during the flow is unavoidable. It must be stressed that evolutions of partitions received recently a lot of attention from the mathematical community 
\cite{Taylor:1993,Freire:2010.analpde,Freire:2010commpde,DGK:2014,MNPS:2016,KT:2017,INSh:2019,SchW:2017} both as a natural generalization of the case of two phases, and because they model a variety of physical phenomena, such as grain growth  and evolution of multicrystals \cite{KL:2001,BRN:2003}.

The presence of singularities at finite time is a common feature of mean curvature flow type motions, 
both in the two-phase case 
\cite{Grayson:1989,Huisken:1990,Huisken:1993,Huisken:1998,Mantegazza:2011}, and in the multiphase case  (see for instance \cite{MNPS:2016}). This phenomenon justifies to introduce
and study some notion of weak solution, defined globally in time. 
This has been done in several different ways: 
just to quote a few, 
the Brakke varifold-solution  \cite{Brakke:1978}, 
the viscosity solution  (see \cite{Giga:2006} and references therein), 
the Ilmanen elliptic regularization \cite{Ilmanen:1994},  the level-set theoretic subsolution and the minimal  barrier solution   (see \cite{Bellettini:2012} and references therein), the Almgren-Taylor-Wang  \cite{ATW:1993} and  Luckhaus-Sturzenhecker \cite{LS:1995} solutions, next included by De Giorgi into his notion of minimizing movement and generalized minimizing movement (GMM) \cite{Degiorgi:1993,Degiorgi:1996};  
see also 
\cite{ESS:1992,MR:2008}.
Some of those solutions (e.g., the Brakke solution \cite{Brakke:1978,Tonegawa:2019}, the GMM solution \cite{BH:2018.siam}, the elliptic regularization 
\cite{SchW:2017}) can be adapted to treat the multiphase case at least in the Euclidean case, especially those that do not rely heavily on the comparison principle. Also, the existence of a distributional solution  of mean curvature evolution of partitions on the torus using the time thresholding method introduced in \cite{MBO:1992}  has been proved in \cite{LO:2016}; see also \cite{KT:2017}.

The aim of the present paper is to prove the existence of a 
GMM for anisotropic mean curvature flow of partitions with no restrictions on the space dimension, in the presence of a set of mobilities and forcing terms,
and to point out 
some qualitative properties of this weak evolution, which are
obtained via a comparison argument with a
GMM of each single phase considered separately. 

Let us recall the definition of GMM for partitions from 
\cite{Degiorgi:1996} (see Definition \ref{def:g_partitions}
for the notion of bounded partition).

\begin{definition}[\textbf{Generalized minimizing movement for partitions}]\label{def:GMM}
Let $\P_b(N+1)$ be the set of all bounded $(N+1)$-partitions of $\R^n$ (Definition \ref{def:g_partitions}) endowed with the $L^1(\R^n)$-convergence, and let
$\func: \P_b(N+1)\times \P_b(N+1)\times [1,+\infty) \to[-\infty,+\infty]$ be defined as 
$$
\func(\cA,\cB,\lambda) = \sum\limits_{j=1}^{N+1} P_{\phi_j}(A_j) + \lambda \sum\limits_{j=1}^{N+1} \int_{A_j\Delta B_j} 
\dist_{\psi_j}(x,\p B_j)dx +\sum\limits_{j=1}^{N+1} \int_{A_j} H_jdx, 
$$
where $\phi_j$ and $\psi_j$ are norms 
on $\R^n,$  called anisotropies and mobilities, respectively,  $H_i\in L^1_\loc(\R^n),$ $i=1,\ldots,N,$ and $H_{N+1}\in L^1(\R^n)$ are 
driving forces,  $P_{\phi_j}(A_j)$ is the $\phi_j$-anisotropic perimeter, $\cA=(A_1,\ldots,A_{N+1}),$  $\cB=(B_1,\ldots,B_{N+1})$ and  $\dist_{\psi_j}(\cdot,E)$ is the $\psi_j$-distance function from 
$E\subseteq\R^n.$   We say that a map $\cM:[0,+\infty)\to \P_b(N+1)$ 
is a GMM
associated to $\func$  starting from $\cG\in \P_b(N+1)$,  and we write $\cM\in GMM(\func,\cG),$ if there exist
$\cL:[1,+\infty)\times \N_0 \to \P_b(N+1)$ and a diverging sequence $\{\lambda_h\}$ such that
$$
\lim\limits_{h\to+\infty} \cL(\lambda_h,[\lambda_ht]) = \cM(t)\quad \text{in $L^1(\R^n)$  for any $t\ge0,$}
$$
where the bounded partitions $\cL(\lambda,k),$ $\lambda\ge1,$ $k\in\N_0,$ are defined  inductively as $\cL(\lambda,0)=\cG$  and
$$
\func(\cL(\lambda,k+1),\cL(\lambda,k),\lambda) = 
\min\limits_{\cA\in \P_b(N+1)} \func(\cA,\cL(\lambda,k),\lambda)
\qquad \forall k \geq 0.
$$
\end{definition}

Our first result (see Theorems \ref{teo:existence_GMM} and \ref{teo:existence_GMM2f} for the precise  statements) 
extends the existence results of \cite{BH:2018.siam} to the case with anisotropies, mobilities and external forces. We also improve the $\frac{1}{n+1}$-H\"older regularity in time of GMM proven in \cite{BH:2018.siam} to $1/2$-H\"older continuity in the two-phase case, without any restriction on the anisotropies.

\begin{theorem}\label{teo:main_intro1}
Suppose that the driving forces $\{H_i\}$ satisfy \eqref{H_condition}.
Let  $\cG\in \P_b(N+1)$.  
The following assertions hold:
\begin{itemize}
\item[(a)] Let $N\ge2.$ If $\{\phi_j\}$ 
satisfy \eqref{kappaning_def} and \eqref{Phi_condition}, 
then $GMM(\func,\cG)$ is nonempty.
Moreover, any $\cM = (M_1,\ldots, M_{N+1})\in GMM(\func,\cG)$ is locally $\frac{1}{n+1}$-H\"older continuous in time and 
for any $t\ge 0,$ $\bigcup\limits_{j=1}^N M_j(t)$ is contained in the bounded closed convex set related to  $\cG$ and $H_j$ (see \eqref{dddid}).

\item[(b)] Let $N=1.$ Then, with no assumptions 
on the anisotropies $\phi_1,\phi_2$ and the mobilities $\psi_1,\psi_2,$  $GMM(\func,\cG)$ is non-empty. Moreover, any $\cM\in GMM(\func,\cG)$ is locally $1/2$-H\"older continuous in time. 
\end{itemize}
\end{theorem}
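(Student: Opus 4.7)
The plan is to follow the De Giorgi--Almgren--Taylor--Wang--Luckhaus--Sturzenhecker scheme, adapted to partitions with anisotropies, mobilities, and forcing, in the spirit of \cite{BH:2018.siam}. First, for fixed $\cB\in\P_b(N+1)$ and $\lambda\ge 1$, I would prove by the direct method that $\cA\mapsto\func(\cA,\cB,\lambda)$ attains a minimum in $\P_b(N+1)$: each $P_{\phi_j}$ is $L^1$-lower semicontinuous, the dissipation and forcing terms are continuous under $L^1$ convergence of partitions with uniformly bounded total perimeter, and the coercivity hypothesis \eqref{H_condition} on $H_{N+1}$ together with the decay of the $H_j$ provides tightness of minimizing sequences via BV compactness. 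This defines the iterates $\cL(\lambda,k)$ with uniformly bounded total perimeter $\sum_j P_{\phi_j}(L_j^k)$.

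Next, testing $\cL(\lambda,k+1)$ against the competitor $\cL(\lambda,k)$ yields the fundamental discrete dissipation inequality
\[
\sum_{j} P_{\phi_j}(L_j^{k+1}) + \lambda\sum_{j}\int_{L_j^{k+1}\Delta L_j^k}\dist_{\psi_j}(x,\p L_j^k)\,dx \le \sum_{j} P_{\phi_j}(L_j^k) + R_k,
\]
with $R_k$ collecting the controlled forcing contributions, bounded uniformly in $k$. Telescoping in $k$ bounds the total discrete dissipation by $O(1/\lambda)$. The support confinement \eqref{dddid} is obtained by a barrier argument: for each $j \le N$, I would compare with the competitor obtained by truncating $L_j^{k+1}$ outside a suitably large convex set, using the growth-at-infinity condition on $H_j$ in \eqref{H_condition} to ensure that the perimeter cost of truncation is outweighed by the forcing gain; propagation step by step delivers the invariant convex region.

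To obtain H\"older regularity, I would combine the dissipation with a tube-type inequality relating $|L_j^{k+1}\Delta L_j^k|$ to the integral $\int_{L_j^{k+1}\Delta L_j^k}\dist_{\psi_j}(x,\p L_j^k)\,dx$ and to the perimeter of $L_j^k$; the equivalence of all anisotropies and mobilities guaranteed by \eqref{kappaning_def}--\eqref{Phi_condition} allows this bound with a constant uniform across the $N+1$ phases. Raising to a suitable power and summing by H\"older's inequality with exponents $(n+1,(n+1)/n)$ across the $k-j$ time steps then yields
\[
\|\cL(\lambda,k)-\cL(\lambda,j)\|_{L^1}\le C\bigl((k-j)/\lambda\bigr)^{1/(n+1)} = C(t-s)^{1/(n+1)}
\]
uniformly in $\lambda$. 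A diagonal extraction along $\lambda_h\to+\infty$ on a countable dense set of times, together with BV compactness, produces $\cM\in GMM(\func,\cG)$; the H\"older bound and the confinement pass to the limit by $L^1$-lower semicontinuity and Fatou's lemma.

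For part (b), the two-phase constraint $A_2 = \R^n\setminus A_1$ forces $\p A_1=\p A_2$ and $|A_1\Delta B_1|=|A_2\Delta B_2|$, so there is no loss due to the partition topology: the quadratic tube inequality $|E\Delta F|^2\le C\,P_\phi(F)\int_{E\Delta F}\dist_\psi(x,\p F)\,dx$ is available for arbitrary norms $\phi,\psi$ without any structural condition, and chaining by Cauchy--Schwarz directly upgrades the exponent to $\|\cL(\lambda,k)-\cL(\lambda,j)\|_{L^1}\le C\sqrt{(k-j)/\lambda}=C\sqrt{t-s}$. The step I expect to be most delicate is the multiphase tube inequality: at triple or higher-order junctions the individual boundaries $\p L_j$ interact in a way that degrades the quadratic bound available in the two-phase case, leaving only the weaker $|E\Delta F|^{1+1/n}\le C\int\dist$ version, which is the source of the dimension-dependent exponent $1/(n+1)$ in part (a). The structural conditions \eqref{kappaning_def}--\eqref{Phi_condition} enter precisely to make this weaker estimate robust and uniform across all phases.
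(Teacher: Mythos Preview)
Your overall scheme (direct method for existence of one-step minimizers, discrete dissipation from testing against the previous iterate, confinement by truncation against a convex barrier, diagonal extraction) is correct and matches the paper. The gap is in your treatment of the H\"older estimate, where the ``tube-type inequalities'' you invoke are not valid in the form stated.

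Neither the quadratic estimate $|E\Delta F|^2\le C\,P_\phi(F)\int_{E\Delta F}\dist_\psi(x,\partial F)\,dx$ nor its weaker cousin $|E\Delta F|^{1+1/n}\le C\int\dist$ holds for arbitrary $F$: both require a uniform \emph{lower perimeter density} estimate $P(F,B_r(x))\ge\theta r^{n-1}$ for $x\in\partial F$ and $r\in(0,r_0]$ (this is the content of Proposition~\ref{prop:funny_estimate}). The paper obtains this by first showing that each one-step minimizer is a $(\Lambda_1,\Lambda_2,1,1,1-1/p)$-almost minimizer of $\Per_\Phi$ with $\Lambda_1\sim\lambda$, and then applying the density estimates of Theorem~\ref{teo:density_est}. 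The structural hypothesis \eqref{Phi_condition} enters \emph{here}, and only here: it is exactly what is needed to run the cut-out/fill-in argument for the \emph{lower} density bound in the multiphase setting (see \eqref{low_dens_etsasd} and the role of $\kappa_N$ there). It has nothing to do with making a tube inequality ``uniform across phases''.

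Your diagnosis of the exponent dichotomy is also off. The reason the multiphase case gives $1/(n+1)$ while the two-phase case gives $1/2$ is not a degradation of the tube inequality at junctions; it is the \emph{scale} $r_0$ up to which the lower density estimate holds. In the multiphase case $r_0\sim 1/\lambda$ (because $\Lambda_1\sim\lambda$), and optimizing $\ell$ in Proposition~\ref{prop:funny_estimate} forces $\ell>r_0$, bringing in the factor $(\ell/r_0)^{n-1}$ and yielding $1/(n+1)$. In the two-phase case the paper first proves an $L^\infty$ bound $\sup_{E_\lambda\Delta E_0} d_\psi^{E_0}\le C\lambda^{-1/2}$ (Step~3 of the proof of Theorem~\ref{teo:existence_GMM2f}), which upgrades the density estimates to hold for $r\in(0,C_3\lambda^{-1/2})$; with this larger $r_0$ one can choose $\ell<r_0$ and recover the quadratic form of the tube estimate, hence $1/2$. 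The paper explicitly remarks that it is this $L^\infty$ bound (more precisely its precursor, Step~2) whose multiphase analogue is missing. Your proposal skips both the density estimates and the $L^\infty$ step entirely, so as written it does not close.
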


To prove  Theorem  \ref{teo:main_intro1} we establish  uniform density estimates for  minimizers of $\func$  using the method of cutting out and filling in with balls, an argument of \cite{LS:1995}. At this level the presence of mobilities does not create any new substantial problem. 
While in the two-phase case we do not need any assumption on the anisotropies, 
in the multiphase case assumption \eqref{kappaning_def} is needed to get the lower volume density estimate for minimizers which is important in the proof of time-continuity of GMM. 

In case of partitions with the same anisotropies and the same mobilities and without forcing,  the H\"older exponent of GMM can be improved to $1/2$ (see Theorem \ref{teo:existence_GMM_half}). Denoting by $\functwo$ the restriction of $\func$ to two-phase case without forcing (see \eqref{functo2}), this can be done using the comparison property (Theorem \ref{teo:comparison}) between the minimizers of $\func$ and the minimizers of $\functwo$.  This comparison result also 
enables us to get a weak comparison flow of corresponding multiphase 
and two-phase flows (Theorem \ref{teo:weak_comparison}): 

\begin{theorem}\label{teo:main_intro2}
Assume that $\phi_j=\phi_i$ and $\psi_j=\psi_i$ for all $i,j=1,\ldots,N+1,$ and $H_i=0$ for all $i=1,\ldots,N+1.$ Then any $\cM \in GMM(\func,\cG)$ is locally $1/2$-H\"older continuous in time and $\bigcup\limits_{j=1}^N M_j(t)$ is contained in the closed convex envelope of the union $\bigcup\limits_{j=1}^N G_j$ of the bounded components of $\cG$ for any $t\ge0.$  Moreover:

\begin{itemize}
\item[(a)] for any $\cM\in GMM(\func,\cG)$  and for any $i\in\{1,\ldots,N+1\}$ there exists $L_i\in GMM(\functwo,G_i)$  such that 
$$
L_i(t)\subseteq M_i(t),\qquad t\ge0;
$$

\item[(b)] Let $C_i,$ $i\in\{1,\ldots,N\},$ and $C_{N+1}$ be any convex sets such that $C_i\subset G_i$ for any $i\in\{1,\ldots,N+1\}$ and let $L_i\in MM(\functwo,C_i)$ be the unique minimizing movement starting from $C_i.$ Then for any $\cM\in GMM(\func,\cG),$ 
\begin{equation}\label{appear}
L_i(t)\ne\emptyset \quad \Longrightarrow \quad L_i(t)\ne \emptyset\qquad\text{for any $i\in\{1,\ldots,N\},$}
\end{equation}
and 
\begin{equation}\label{disappear}
\R^n\setminus L_{N+1}(t)=\emptyset \quad \Longrightarrow \quad \R^n\setminus M_{N+1}(t)= \emptyset. 
\end{equation}
\end{itemize}
\end{theorem}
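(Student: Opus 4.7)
The plan is to reduce the whole statement to iterated one-step comparisons between multiphase and two-phase minimizers (Theorem~\ref{teo:comparison}), followed by a passage to the limit $\lambda_h\to\infty$. The local $\tfrac12$-Hölder continuity in time and the containment $\bigcup_{j=1}^N M_j(t)\subseteq \hull{\bigcup_{j=1}^N G_j}$ are part of Theorem~\ref{teo:existence_GMM_half} mentioned just before the statement; both are consequences of the same comparison mechanism, the former because the two-phase $\tfrac12$-Hölder estimate of Theorem~\ref{teo:main_intro1}(b) can be transferred through the step-by-step comparison, and the latter because (by convexity of $K:=\hull{\bigcup_{j=1}^N G_j}$ and equality of the $\phi_i,\psi_i$) the stationary set $\R^n\setminus K$ is a valid two-phase barrier for the exterior component $M_{N+1}$ at every discrete step.

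For part (a), I would fix $\cM=(M_1,\ldots,M_{N+1})\in GMM(\func,\cG)$, realized as an $L^1(\R^n)$-limit along some diverging sequence $\{\lambda_h\}$ of the discrete multiphase scheme $\cL(\lambda_h,[\lambda_h t])$, whose $i$-th component I denote by $M_i^h(k)$. For each fixed $i\in\{1,\ldots,N+1\}$ I would construct a discrete two-phase trajectory $\{L_i^h(k)\}_k$ by induction: set $L_i^h(0):=G_i$, and given $L_i^h(k)\subseteq M_i^h(k)$, apply Theorem~\ref{teo:comparison} with reference data $L_i^h(k)$ for the two-phase problem and $\cL(\lambda_h,k)$ for the multiphase one to produce a minimizer $L_i^h(k+1)$ of $\functwo(\,\cdot\,,L_i^h(k),\lambda_h)$ satisfying $L_i^h(k+1)\subseteq M_i^h(k+1)$. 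The two-phase GMM $L_i$ is then obtained by a diagonal extraction along $\{\lambda_h\}$: uniform $\tfrac12$-Hölder bounds for the two-phase discrete scheme (Theorem~\ref{teo:main_intro1}(b)) together with the uniform containment $L_i^h(k)\subseteq K$ yield equi-continuity and tightness, so that $L_i^h(\lambda_h,[\lambda_h t])\to L_i(t)$ in $L^1(\R^n)$ for all $t\geq 0$, with $L_i\in GMM(\functwo,G_i)$ and $L_i(t)\subseteq M_i(t)$ inherited from the discrete inclusion.

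Part (b) would then follow from part (a) together with two classical ingredients. First, the two-phase minimizing movement from a convex initial datum $C_i$ is unique (this is the standard one-step uniqueness for convex data, iterated in $k$), so $MM(\functwo,C_i)=\{L_i\}$ is a singleton. Second, the two-phase discrete scheme is monotone under inclusion of the initial datum; since $C_i\subset G_i$, combining with part (a) gives $L_i(t)\subseteq \tilde L_i(t)\subseteq M_i(t)$ for some $\tilde L_i\in GMM(\functwo,G_i)$. This immediately yields the intended reading of \eqref{appear}, namely $L_i(t)\neq\emptyset\Rightarrow M_i(t)\neq\emptyset$; for \eqref{disappear} the same inclusion $L_{N+1}(t)\subseteq M_{N+1}(t)$ gives $L_{N+1}(t)=\R^n\Rightarrow M_{N+1}(t)=\R^n$.

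The hard part will be the inductive selection in part (a): Theorem~\ref{teo:comparison} asserts comparison only for \emph{some} pair of minimizers, while the construction needs the selected two-phase minimizer $L_i^h(k+1)$ to sit inside the \emph{specific} multiphase component $M_i^h(k+1)$ being used to build $\cM$. The cleanest route would be to exploit a lattice-type structure on the set of two-phase minimizers (existence of a maximal minimizer contained in a given barrier), or equivalently to invoke the comparison theorem in a form that chooses sides consistently across the iteration. Once this point is settled, everything remaining is routine $L^1$-compactness and continuity of the $\functwo$-minimization under $L^1$-convergence of the reference data.
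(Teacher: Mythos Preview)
Your overall strategy matches the paper's: iterate a one-step discrete comparison between the multiphase minimizer and a two-phase minimizer (this is the content of Theorem~\ref{teo:comparison} together with Lemma~\ref{lem:minimal_minimizer}), then pass to the limit along $\{\lambda_h\}$ (Theorem~\ref{teo:weak_comparison}); the $\tfrac12$-H\"older continuity and the convex-hull containment are indeed taken from Theorems~\ref{teo:existence_GMM_half} and~\ref{teo:existence_GMM} respectively, and part~(b) follows from part~(a) plus uniqueness and monotonicity of the two-phase scheme for convex initial data exactly as you say (Corollary~\ref{cor:time_est}).

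Your diagnosis of the ``hard part'' is slightly off, though, and your proposed fix does not quite work as stated. Theorem~\ref{teo:comparison} is \emph{not} a statement about \emph{some} pair of minimizers: it yields $F_i\subseteq A_i$ for the \emph{given} multiphase minimizer $\cA$ and \emph{any} two-phase minimizer $F_i$, but only under the strict hypothesis $2g'_i-g_i+g_j>0$ a.e. The real obstruction is that at step $k=0$ you have $L_i^h(0)=G_i=M_i^h(0)$, hence $g'_i=g_i$, and Lemma~\ref{lem:sdist_property} gives only $g_i+g_j\ge 0$, not strict positivity; the same issue recurs at later steps whenever the inclusion $L_i^h(k)\subseteq M_i^h(k)$ is not compact. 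The paper resolves this in Lemma~\ref{lem:minimal_minimizer} by approximating $A_i$ from strictly inside by $E_1\subset\subset E_2\subset\subset\cdots$ so that Theorem~\ref{teo:comparison} applies to each $E_k$, and then invoking \cite{ChMP:2015} to identify the increasing union $\bigcup_k E_k(\lambda)$ with the \emph{minimal} minimizer of $\functwo(\cdot,A_i,\lambda)$; this minimal minimizer is the canonical choice carried through the iteration. Your alternative suggestion of selecting a ``maximal minimizer contained in a given barrier'' is circular: it presupposes that at least one two-phase minimizer already lies inside $M_i^h(k+1)$, which is exactly the inclusion you are trying to establish.
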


Note that the comparison principle \eqref{disappear} implies that any bounded partition will disappear in the long run; moreover, \eqref{appear} allows to estimate the extinction time
of the $i$-th bounded phase (Corollary \ref{cor:time_est}).

Finally, let us mention that a natural problem remains open, namely 
the consistency of GMM with the classical solution, 
provided the latter exists, at least on a short time interval. 
Such a result has been proven by 
Almgren-Taylor-Wang in \cite{ATW:1993} 
in the two-phase case without mobility; 
the proof is based on various stability properties of the flow, 
and using comparison arguments. 
It has also been proven by Almgren-Taylor \cite{AT:1995} in the two-phase crystalline case. 
However,  consistency is not known in the case of networks in the plane 
(and a fortiori for partitions in space), even in the Euclidean case without mobilities and forcing.

The paper is organized as follows. In Section \ref{sec:notation} we introduce the notation, some results from the theory of sets of finite perimeter, and  the definition of a partition. In Section \ref{sec:density_for_almost_min} we prove the density estimates for almost minimizers. The existence of generalized minimizing movements (Theorem \ref{teo:main_intro1}) is established in Section \ref{sec:existence_GMM_partition}. 
In Section \ref{sec:improve_holder} we improve the H\"older regularity of GMM (Theorem \ref{teo:main_intro2}) and provide some weak comparison principles.

\subsection*{Acknowledgments}
The first author acknowledges support from GNAMPA (Gruppo Nazionale per l'Analisi Matematica, la Probabilit\'a
e le loro Applicazioni) of INdAM (Istituto Nazionale di Alta Matematica). The third author acknowledges support from the Austrian Science Fund (FWF) project M~2571-N32.

\section{Notation and preliminaries}\label{sec:notation}

In this section we introduce the notation  and collect some important properties of sets of locally finite perimeter. The standard references for $BV$-functions and sets of finite perimeter are \cite{AFP:2000,Maggi:2012}.

We use $\N_0$ to denote the set of all nonnegative integers.   The symbol $B_r(x)$  stands for the open ball in  $\R^n$  centered at $x\in\R^n$  of radius $r>0.$ The characteristic function of a Lebesgue measurable set $F$ is denoted by $\chi_F$ and its Lebesgue measure by $|F|;$ we set also $\omega_n:=|B_1(0)|.$  We denote by $E^c$ the complement of $E$ in $\R^n.$

Given a norm $\psi$ in $\R^n$ and a nonempty set $E\subseteq\R^n,$ $\dist_\psi(\cdot,E)$ stands for the $\psi$-distance  from $E,$ i.e.,
$$
\dist_\psi(x,E) = \inf\{\psi(x-y):\,\, y\in E\},
$$
and 
$$
\sdist_\psi(x,\p E) = \dist_\psi(x,E) - \dist_\psi(x,\R^n\setminus E)
$$
is the signed $\psi$-distance function from $\p E,$ negative inside $E.$ When $\psi$ is Euclidean for simplicity we drop the dependence on $\psi.$ We also write 
$$
\diam_\psi E:=\sup\{\psi(x-y):\,\,x,y\in E\}
$$ 
to denote the $\psi$-diameter of $E.$  

By $\K(\R^n)$ (resp. $\K_b(\R^n)$) we denote  the collection of all open (resp. open and bounded) subsets of $\R^n.$  The set of $L_\loc^1(\R^n)$-functions having locally bounded total variation in $\R^n$ is denoted by  $BV_\loc(\R^n)$ and the elements of 
$$
BV_\loc(\R^n,\{0,1\}):=\{E\subseteq\R^n:\,\, \chi_E\in BV_\loc(\R^n)\}
$$
are called locally finite perimeter sets. 
Given a $E\in BV_\loc(\R^n,\{0,1\})$ we denote by
\begin{itemize}
\item[a)]  $P(E,\Omega):=\int_\Omega|D\chi_E|$ the perimeter of $E$
in $\Omega\in \K(\R^n);$
\item[b)] $\p E$ the measure-theoretic boundary of $E:$
$$
\p E:=\{x\in \R^n:\,\, 0<|B_\rho\cap E|<|B_\rho|\quad\forall \rho>0\};
$$
\item[c)] $\p^*E$ the reduced boundary of $E;$
\item[d)] $\nu_E$   the outer generalized unit normal to $\p^*E.$
\end{itemize}
For simplicity, we set $P(E):=P(E,\R^n)$  provided $E\in BV(\R^n;\{0,1\}).$ Further, given a Lebesgue measurable set $E\subseteq\R^n$ and $\alpha\in [0,1]$ we define 
$$
E^{(\alpha)}:=\left\{x\in\R^n:\,\,\lim\limits_{\rho\to0^+}
\frac{|B_\rho(x)\cap E|}{|B_\rho(x)|} =\alpha\right\}.
$$
Unless otherwise stated, we always suppose that any locally finite perimeter set $E$ we consider coincides with $E^{(1)}$ (so that by \cite[Eq. 15.3]{Maggi:2012} $\p E$ coincides with the topological boundary). We recall that $\overline{\p^*E}=\p E$  and  $D\chi_E = \nu_E d\cH^{n-1}\res \p^*E,$  where $\cH^{n-1}$ is the $(n-1)$-dimensional Hausdorff measure in $\R^n$ and $\res$ is the symbol of restriction.

\begin{remark}
Given $E\in BV_\loc(\R^n;\{0,1\})$ the map  $\Omega\in \K(\R^n)\mapsto P(E,\Omega)$ extends to a Borel measure in $\R^n$  so that $P(E,B) = \cH^{n-1}(B \cap \p^*E)$ for every Borel set $B \subseteq\R^n.$ Moreover, by \cite[Theorem 3.61]{AFP:2000} for every  $E\in BV_\loc(\R^n;\{0,1\})$ 
$$
\cH^{n-1}(\R^n\setminus (E^{(0)} \cup E  \cup \p^*E)) = 0.
$$
In particular, $\cH^{n-1}(E^{(1/2)}\setminus \p^*E)=0.$
\end{remark}

\begin{theorem}\cite[Theorem 16.3]{Maggi:2012}
If $E$ and $F$ are sets of locally finite perimeter, and we let
\begin{align*}
\{\nu_E = \nu_F \} = & \{x \in \p^*E \cap \p^*F:\,\, \nu_E(x) = \nu_F(x)\},\\
\{\nu_E = -\nu_F \} = & \{x \in \p^*E \cap \p^*F:\,\, \nu_E(x) = - \nu_F(x)\},
\end{align*}
then $E \cap F,$ $E \setminus F$ and $E \cup F$ are locally finite perimeter sets  with
\begin{align}\p^*(E \cap F) \approx & \big(F  \cap \p^* E\big) \cup 
\big(E  \cap \p^* F\big) \cup \big\{\nu_E = \nu_F\big\}, \label{ess_intersection}\\
\p^*(E \setminus F) \approx & \big(F^{(0)} \cap \p^* E\big) \cup 
\big(E  \cap \p^* F\big) \cup \big\{\nu_E = -\nu_F\big\}, \label{ess_differense}\\
\p^*(E \cup F) \approx & \big(F^{(0)} \cap \p^* E\big) \cup 
\big(E^{(0)} \cap \p^* F\big) \cup \big\{\nu_E = \nu_F\big\}, \label{ess_union}
\end{align}
where $A\approx B$  means   $\cH^{n-1}(A\Delta B) =0.$
%
%
%
\end{theorem}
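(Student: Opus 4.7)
My plan is to first establish that $E\cap F$, $E\cup F$ and $E\setminus F$ are sets of locally finite perimeter, and then to identify their reduced boundaries by a blow-up analysis. The first part follows from the pointwise identities $\chi_{E\cap F}=\chi_E\chi_F$, $\chi_{E\cup F}=\chi_E+\chi_F-\chi_E\chi_F$ and $\chi_{E\setminus F}=\chi_E(1-\chi_F)$, combined with the stability of $BV_\loc\cap L^\infty$ under products and lattice operations. Concretely, a mollification argument yields the submodular bound
\[
P(E\cap F,\Omega)+P(E\cup F,\Omega)\le P(E,\Omega)+P(F,\Omega) \qquad \forall\,\Omega\in\K(\R^n),
\]
from which local finiteness of all three perimeters is immediate (using $E\setminus F=E\cap F^c$ to handle the last case).

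For the boundary identification I would exploit the structural fact that at $\cH^{n-1}$-a.e.\ $x\in\R^n$, $x$ lies in exactly one of $E^{(0)}$, $E$, or $\p^*E$ (and likewise for $F$), and that at $x\in\p^*E$ the rescaled blow-ups $(E-x)/r$ converge in $L^1_\loc$ to the half-space $H_E(x):=\{y\cdot\nu_E(x)<0\}$. I would focus on \eqref{ess_intersection}, since \eqref{ess_differense} and \eqref{ess_union} follow by complementation using $E\setminus F=E\cap F^c$ and $E\cup F=(E^c\cap F^c)^c$; one must just track that passing to $F^c$ swaps $\nu_F$ with $-\nu_F$ and exchanges the conditions $F^{(1)}$ and $F^{(0)}$, which accounts for both the appearance of $F^{(0)}\cap\p^*E$ and of $\{\nu_E=-\nu_F\}$ in the formulas.

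At $\cH^{n-1}$-a.e.\ $x$, the case analysis for the blow-up of $E\cap F$ runs as follows. If $x\in \p^*E\cap F^{(1)}$ (which equals $F\cap\p^*E$ by our convention $F=F^{(1)}$), the limit equals $H_E(x)$, so $x\in\p^*(E\cap F)$ with $\nu_{E\cap F}(x)=\nu_E(x)$; symmetric reasoning applies if $x\in E\cap\p^*F$. If $x\in\p^*E\cap\p^*F$ with $\nu_E(x)=\nu_F(x)$, the limit is the common half-space and $x\in\p^*(E\cap F)$. If $\nu_E(x)=-\nu_F(x)$, the limit is empty, so the density of $E\cap F$ at $x$ is $0$. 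In every other configuration (at least one density is $0$, or the two normals lie at some other relative angle) the density of $E\cap F$ at $x$ belongs to $\{0\}\cup(0,1/2)$, so $x\notin\p^*(E\cap F)$. Collecting the contributions gives exactly the right-hand side of \eqref{ess_intersection}.

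The main obstacle is the subset $\Sigma\subseteq\p^*E\cap\p^*F$ where $\nu_E(x)$ and $\nu_F(x)$ are neither parallel nor antiparallel: there the blow-up of $E\cap F$ is a proper dihedral wedge, whose density at the origin lies strictly in $(0,1/2)$, so $\Sigma$ cannot contribute to $\p^*(E\cap F)$. One must verify carefully that $\Sigma$ is correctly discarded on both sides of $\approx$, and that the ambient inclusion $\p^*(E\cap F)\subseteq\p^*E\cup\p^*F\cup(\text{a }\cH^{n-1}\text{-null set})$ holds, which itself follows from the blow-up convergence, since half-spaces are the only possible blow-up limits at reduced-boundary points. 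The corresponding $\Sigma$-arguments for \eqref{ess_differense} and \eqref{ess_union} are identical modulo the $\nu_F\leftrightarrow-\nu_F$ substitution noted above.
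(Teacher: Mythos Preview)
The paper does not prove this theorem: it is quoted verbatim from \cite[Theorem 16.3]{Maggi:2012} as a preliminary fact, with no argument supplied. There is therefore no ``paper's own proof'' to compare against.

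Your outline is the standard blow-up argument and is essentially the proof given in Maggi's book. A few remarks. First, your phrase ``at $\cH^{n-1}$-a.e.\ $x\in\R^n$'' is imprecise since $\cH^{n-1}$ is not $\sigma$-finite on $\R^n$; what you need (and what the paper records just above the theorem) is Federer's structure theorem $\cH^{n-1}\big(\R^n\setminus(E^{(0)}\cup E^{(1)}\cup\p^*E)\big)=0$, which is the correct starting point for the case split. Second, the set $\Sigma=\{x\in\p^*E\cap\p^*F:\nu_E(x)\ne\pm\nu_F(x)\}$ actually satisfies $\cH^{n-1}(\Sigma)=0$ by rectifiability of reduced boundaries (at $\cH^{n-1}$-a.e.\ point of the intersection of two countably $(n-1)$-rectifiable sets the approximate tangent planes coincide); you do not strictly need this fact for \eqref{ess_intersection} since, as you observe, points of $\Sigma$ lie on neither side of the $\approx$, but it is the cleanest way to dispose of the issue and is how Maggi handles it. With these two clarifications your sketch is complete.
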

\smallskip

The following generalizes the notion of the perimeter. 
 
\begin{definition}[\textbf{Anisotropic perimeter}]
Let $\phi:\R^n\to[0,\infty)$ be a norm in $\R^n.$ Given $\Omega\in \K(\R^n)$ the $\phi$-perimeter  of  $E\in BV(\Omega;\{0,1\})$ is 
$$
P_\phi(E,\Omega):=\int_{\Omega\cap\p^*E} \phi(\nu_E)d\cH^{n-1}.
$$ 
When $\Omega=\R^n,$ we write $P_\phi(E):=P_\phi(E,\R^n),$ and when $\phi$ is Euclidean, we write $P$ in place of $P_\phi.$
\end{definition}

It is well-known that $E\mapsto P_\phi(E;\Omega)$ is $L^1_\loc(\Omega)$-lower semicontinuous. 
Recall also that for every $E,F\in BV_\loc(\R^n;\{0,1\})$ and $\Omega\in \K(\R^n)$
\begin{equation}\label{famfor}
P_\phi(E\cap F,\Omega) + P_\phi(E\cup F,\Omega) \le P_\phi(E,\Omega) + P_\phi(F,\Omega).
\end{equation}

\subsection{Anisotropic partitions}\label{sec:partitions}

We recall the notions of partition, almost-minimizer  and bounded partition, see \cite{BH:2018.siam}. 

\begin{definition}[\bf Partition]\label{def:partitions}
Given an integer  $N\ge2,$  an $N$-tuple  $\cC=(C_1,\ldots,C_{N})$ of subsets of  $\R^n$ is called an   $N$-partition   of $\R^n$ (a partition, for short) if 
\begin{itemize}
\item[(a)] $C_i\in BV_\loc(\R^n;\{0,1\})$ for every $i=1,\ldots, N,$
\item[(b)] $\sum\limits_{i=1}^{N} |C_i\cap K| =|K|$  for each compact $K\subset \R^n.$
\end{itemize}
\end{definition}

The collection of all $N$-partitions of $\R^n$  is denoted by $\P(N).$ Our assumptions $C_i=C_i^{(1)}$ imply $C_i\cap C_j=\emptyset$ for $i\ne j.$  

The elements of $\P(N)$ are denoted by calligraphic letters  $\cA,\cB,\cC,\ldots$ and the components of $\cA\in \P(N)$ by the corresponding roman letters 
$(A_1,\ldots,A_N).$ 

Let $\phi_1,\ldots,\phi_N$ be norms in $\R^n$ and set $\Phi:=\{\phi_1,\ldots,\phi_N\}.$ The functional 
$$
(\cA,\Omega)\in \P(N)\times \K(\R^n)\mapsto \Per_\Phi(\cA,\Omega):= \sum\limits_{i=1}^{N} P_{\phi_i}(A_i,\Omega)
$$
is called the anisotropic perimeter, or $\Phi$-perimeter of the partition $\cA$ in $\Omega.$ For simplicity, we write $\Per_\Phi(\cA):=\Per_\Phi(\cA,\R^n).$ For shortness, we also set $\Per_\Phi=\Per$ when all $\phi_i$ are Euclidean. 
Since $N$ is finite, there exist $0<c_\Phi \le C_\Phi<+\infty $ such that 
\begin{equation}\label{Phi_assump}
c_\Phi\le \phi_i(\nu)\le C_\Phi 
\end{equation}
for any $i=1,\ldots,N$ and $\nu\in\S^{n-1},$
therefore,
\begin{equation}\label{per_vs_anis_per}
c_\Phi \Per(\cA,\Omega) \le \Per_\Phi(\cA,\Omega) \le C_\Phi \Per(\cA,\Omega). 
\end{equation}
In view of \cite[Proposition 3.3]{BH:2018.siam}
\begin{align*}
\Per_\Phi(\cA,\Omega)= \sum\limits_{1\le i<j\le N}\,\, \int_{\Omega\cap \p^*A_i\cap \p^*A_j} \big(\phi_i(\nu_{A_i}) + \phi_j(\nu_{A_i})\big)\,d\cH^{n-1},
\end{align*}
i.e., on a generalized hypersurface $\Sigma_{ij}:=\Omega\cap \p^*A_i\cap \p^*A_j$ dividing the phase $i$ from the phase $j$ the perimeter contributes 
$$
\int_{\Sigma_{ij}} (\phi_i(\nu_{\Sigma_{ij}}) +\phi_j(\nu_{\Sigma_{ij}})) d\cH^{n-1},
$$  
where $\nu_{\Sigma_{ij}}$ is the generalized unit normal to $\Sigma_{ij}$ pointing for instance from $A_i$ to $A_j.$
We set 
\begin{equation}\label{L_1dist}
\cA\Delta\cB:= \bigcup\limits_{j=1}^{N} A_j\Delta B_j \qquad\text{and}\qquad |\cA\Delta \cB|:= \sum\limits_{j=1}^{N} |A_j\Delta B_j|, 
\end{equation}
where $\Delta $ is the symmetric difference of sets, i.e., 
$E\Delta F = (E\setminus F)\cup (F\setminus E).$

We say that the sequence $\{\cA^{(k)}\}\subseteq\P(N)$ 
{\it converges} to $\cA\in \P(N)$ in $L_\loc^1(\R^n)$  if 
$$
|(\cA^{(k)}\Delta \cA)\cap K|:= \sum\limits_{j=1}^{N} |(A_j^{(k)}\Delta A_j)\cap K| \to 0
\qquad\text{as $k\to+\infty$}
$$
for every compact set $K\subset \R^n.$
Since $E\in BV_\loc(\R^n;\{0,1\})\mapsto P_{\phi_i}(E,\Omega)$ is $L_\loc^1(\R^n)$-lower semicontinuous 
for any $\Omega\in \K(\R^n),$ so is the map $\cA\in \P(N)\mapsto \Per_\Phi(\cA,\Omega).$   From \eqref{per_vs_anis_per} and \cite[Theorem 3.2]{BH:2018.siam} we get
 
\begin{proposition}[\textbf{Compactness}] 
Let $\{\cA^{(l)}\}\subset \P(N)$  be a sequence of partitions such that 
$$
\sup\limits_{l\ge1} \,\,\Per_\Phi(\cA^{(l)},\Omega)<+\infty\qquad\forall \Omega\in\K_b(\R^n). 
$$
Then there exist a partition $\cA\in \P(N)$ and a subsequence $\{\cA^{(l_k)}\}$ converging  to $\cA$ in $L_\loc^1(\R^n)$ as $k\to+\infty.$ 
\end{proposition}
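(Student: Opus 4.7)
The plan is to reduce the statement to the standard $BV_\loc$-compactness theorem applied to each phase, combined with a finite diagonal extraction. Indeed, for every $\Omega \in \K_b(\R^n)$ and every $i\in\{1,\ldots,N\}$ the lower bound in \eqref{per_vs_anis_per} gives
$$
P(A_i^{(l)},\Omega) \le \sum_{j=1}^{N} P(A_j^{(l)},\Omega) = \Per(\cA^{(l)},\Omega) \le \frac{1}{c_\Phi}\, \Per_\Phi(\cA^{(l)},\Omega),
$$
which by hypothesis is uniformly bounded in $l$. Since $\|\chi_{A_i^{(l)}}\|_{L^\infty(\R^n)}\le 1$, the sequence $\{\chi_{A_i^{(l)}}\}_l$ is bounded in $BV(\Omega)$ for every bounded open $\Omega$.

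The compactness theorem for $BV_\loc$ (\cite[Theorem 3.23]{AFP:2000}) then yields, for each fixed index $i$, a subsequence along which $\chi_{A_i^{(l)}}$ converges in $L^1_\loc(\R^n)$ to $\chi_{A_i}$ for some $A_i\in BV_\loc(\R^n;\{0,1\})$. Applying this successively for $i=1,\ldots,N$ and using the standard diagonal procedure (a finite one, since $N$ is finite) produces a single subsequence $\{l_k\}$ such that $\chi_{A_i^{(l_k)}}\to \chi_{A_i}$ in $L^1_\loc(\R^n)$ for every $i\in\{1,\ldots,N\}$. After replacing each $A_i$ by its density-one representative $A_i^{(1)}$, which does not alter its $L^1$-equivalence class, property (a) of Definition \ref{def:partitions} is satisfied.

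It remains to verify the covering property (b). Fix a compact set $K\subset \R^n$; for every $k$ the partition identity gives $\sum_{i=1}^{N} |A_i^{(l_k)}\cap K|=|K|$. Since $L^1_\loc$-convergence of characteristic functions implies $|A_i^{(l_k)}\cap K|\to |A_i\cap K|$ as $k\to+\infty$ for each $i$, passing to the limit in a sum of $N$ (finitely many) terms yields $\sum_{i=1}^{N} |A_i\cap K|=|K|$. Hence $\cA=(A_1,\ldots,A_N)\in \P(N)$ and $\cA^{(l_k)}\to \cA$ in $L^1_\loc(\R^n)$. I do not foresee any real obstacle here; the only mildly delicate point is ensuring that the partition structure persists in the limit, but this is immediate because $L^1_\loc$-convergence preserves the pointwise a.e.\ identity $\sum_{i=1}^N \chi_{A_i^{(l)}}=1$ after integrating over $K$.
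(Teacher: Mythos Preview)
Your proof is correct and follows essentially the same route as the paper: the paper simply invokes \eqref{per_vs_anis_per} to reduce to uniform bounds on the Euclidean perimeter and then cites \cite[Theorem~3.2]{BH:2018.siam}, whose proof is precisely the componentwise $BV_\loc$-compactness plus finite diagonal extraction that you have written out in detail. Your additional verification that the partition identity survives in the limit is exactly what is needed to conclude $\cA\in\P(N)$.
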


\subsection{Bounded partitions}

\begin{definition}[\textbf{Bounded partition}]\label {def:g_partitions}
A  partition $\cC=(C_1,\ldots,C_{N+1})\in\P(N+1)$ is called  bounded, and we write $\cC\in \P_b(N+1),$ if $C_i$ is bounded for each $i=1,\ldots,N.$ 
\end{definition}

Note that $\cA\Delta\cB \strictlyincluded \R^n$ for every $\cA,\cB\in \P_b(N+1),$ and therefore, 
$$
|\cA\Delta\cB| = \sum\limits_{j=1}^{N+1} |A_j\Delta B_j|
$$
is the {\it $L^1(\R^n)$-distance} in $\P_b(N+1).$ 

Given $\cA\in \P_b(N+1),$ 
we denote by $\hull{\cA}$ the closed convex hull of  $\bigcup\limits_{i=1}^N A_i.$ 

In view of \eqref{per_vs_anis_per} and \cite[Theorem 3.10]{BH:2018.siam} we have the following compactness result. 

\begin{proposition}[\textbf{Compactness}]\label{prop:compactness}
Let   $\cA^{(k)}\in \P_b(N+1),$ $k=1,2,\ldots,$ and $\Omega\in \K_b(\R^n)$ be such that
$$
\sup\limits_{k\ge1} \,\,\Per_\Phi(\cA^{(k)})< +\infty,\qquad \hull{\cA^{(k)}}\subseteq \Omega
\qquad \forall k\ge1. 
$$
Then there exist $\cA\in\P_b(N+1)$ and a subsequence $\{\cA^{(k_l)}\}$ converging to $\cA$ in $L^1(\R^n)$ as $l\to+\infty.$ Moreover,
$
\bigcup\limits_{i=1}^N A_j\subseteq \overline \Omega. 
$
\end{proposition}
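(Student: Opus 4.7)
The plan is to reduce the statement to the preceding $L^1_\loc$-compactness proposition for $\P(N+1)$, and then to upgrade the convergence to global $L^1(\R^n)$ using the uniform containment hypothesis $\hull{\cA^{(k)}}\subseteq \Omega$. First, since $\Per_\Phi(\cA^{(k)},\Omega')\le \Per_\Phi(\cA^{(k)})$ for every $\Omega'\in\K_b(\R^n)$ and the total anisotropic perimeters are uniformly bounded, the previous compactness result applied in $\P(N+1)$ yields a partition $\cA\in\P(N+1)$ and a (not relabelled) subsequence with $\cA^{(k)}\to\cA$ in $L^1_\loc(\R^n)$. Passing to a further subsequence, we may also assume $\chi_{A_j^{(k)}}\to\chi_{A_j}$ a.e.\ in $\R^n$ for each $j=1,\dots,N+1$.

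Next, I would identify the support of the limit. For each $j\in\{1,\dots,N\}$ and each $k$ one has $A_j^{(k)}\subseteq \hull{\cA^{(k)}}\subseteq \Omega$, so the pointwise a.e.\ limit forces $\chi_{A_j}=0$ almost everywhere on $\R^n\setminus\overline\Omega$; using the convention $A_j=A_j^{(1)}$ this gives $A_j\subseteq\overline\Omega$. In particular $A_j$ is bounded for $j=1,\dots,N$, hence $\cA\in\P_b(N+1)$, and $\bigcup_{j=1}^N A_j\subseteq\overline\Omega$. For the unbounded component, the partition identity together with $\bigcup_{j=1}^N A_j^{(k)}\subseteq \Omega$ shows that $\R^n\setminus\overline\Omega\subseteq A_{N+1}^{(k)}$ (and the same for $A_{N+1}$). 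Consequently $A_{N+1}^{(k)}\Delta A_{N+1}\subseteq \overline\Omega$ and, for $j\le N$, $A_j^{(k)}\Delta A_j\subseteq\overline\Omega$, so for every bounded open set $\Omega'\supset\overline\Omega$ the $L^1_\loc$ convergence gives
\[
|\cA^{(k)}\Delta\cA|=\sum_{j=1}^{N+1}|A_j^{(k)}\Delta A_j|=\sum_{j=1}^{N+1}\int_{\Omega'}|\chi_{A_j^{(k)}}-\chi_{A_j}|\,dx\longrightarrow 0,
\]
which is exactly $L^1(\R^n)$-convergence in $\P_b(N+1)$.

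The only real subtlety is the passage from the componentwise $L^1_\loc$-limit to the global $L^1$-limit, which hinges on the fact that the symmetric differences $A_j^{(k)}\Delta A_j$ are uniformly contained in a single fixed bounded set; this is precisely what the hypothesis $\hull{\cA^{(k)}}\subseteq\Omega$ is designed to deliver and where the a.e.\ argument for $A_j\subseteq\overline\Omega$ must be handled with care to ensure the limit partition is indeed bounded. No new variational or perimeter estimate is needed beyond the local compactness result and the lower semicontinuity of $\Per_\Phi$ already recorded.
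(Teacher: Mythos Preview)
Your argument is correct. The paper does not give a detailed proof of this proposition; it simply invokes \eqref{per_vs_anis_per} together with \cite[Theorem~3.10]{BH:2018.siam}, so your reduction to the preceding $L^1_\loc$-compactness proposition plus the uniform containment $\hull{\cA^{(k)}}\subseteq\Omega$ is exactly the intended route, carried out in full.
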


\section{Density estimates for almost minimizers}\label{sec:density_for_almost_min}

In this section we prove density estimates for almost minimizers (see Theorem \ref{teo:density_est}). In the two-phase case without mobility, density estimates have been proven in \cite{ATW:1993,LS:1995} (see also the proof of Theorem \ref{teo:existence_GMM2f} for the case with mobility and forcing) and in the isotropic $N$-phase case  is proven in \cite{BH:2018.siam}.  The proof of Theorem \ref{teo:density_est} is similar to \cite[Theorem 3.6]{BH:2018.siam}, however, some (technical) difficulties arise when two anisotropies differ too much and this is why we need assumption 
\eqref{kappaNborder} for proving the lower-density estimates.

\begin{definition}[\textbf{Almost-minimizers}]\label{def:almost_min}
Given $\Phi=\{\phi_1,\ldots,\phi_N\},$
$\Lambda_1,\Lambda_2\ge0,$ $\alpha_1,\alpha_2>\frac{n-1}{n}$ and 
$r_0\in(0,+\infty],$ we say that a partition $\cA\in \P(N)$ is a $(\Phi,\Lambda_1,\Lambda_2,r_0,\alpha_1,\alpha_2)$-minimizer in $\R^n$ of $\Per_\Phi$ (a  $(\Lambda_1,\Lambda_2,r_0,\alpha_1,\alpha_2)$-minimizer, or also an almost-minimizer for short) if 
$$
\Per_\Phi(\cA,B_r) \le \Per_\Phi(\cB,B_{r}) +\Lambda_1 |\cA\Delta\cB|^{\alpha_1}
+\Lambda_2 |\cA\Delta\cB|^{\alpha_2}
$$
whenever $\cB\in \P(N),$ $B_r\subset\R^n$ is a ball of radius $r\in(0,r_0)$ and  $\cA\Delta\cB\strictlyincluded B_r.$  
\end{definition}

Define 
\begin{align}
\kappa_N:=&\min\limits_{1\le i<j\le N}\,\, \|\phi_i- \phi_j\|_{L^\infty(\S^{n-1})}, \label{kappaning_def} \\
\beta_1:= &\Big(\frac{c_\Phi n\omega_n^{1-\alpha_1}}
{2^{1+ \alpha_1}  \Lambda_1 }
\Big)^{\frac{1}{n\alpha_1-n+1}},\qquad  \beta_2:= \Big(\frac{c_\Phi n\omega_n^{1-\alpha_2}}
{2^{1+ \alpha_2} \Lambda_2 }
\Big)^{\frac{1}{n\alpha_2-n+1}}, \nonumber\\ 
\gamma_N:= &\frac{c_\Phi - (N-1)\kappa_N/2}{2c_\Phi+ 2(N-1)C_\Phi  - (N-1)\kappa_N}. \nonumber 
\end{align}

\begin{theorem}[\bf Density estimates for almost 
minimizers]\label{teo:density_est}
Assume that the entries of $\Phi$ satisfy \eqref{Phi_assump}. Let $\cA\in \P(N)$ be a $(\Lambda_1,\Lambda_2,r_0,
\alpha_1,\alpha_2)$-minimizer  and   $i\in\{1,\ldots,N\}.$ Then either $A_i=\emptyset$ or for any $x\in \p A_i$ and $r\in (0, \hat r_0]$   
\begin{equation}\label{upper_vol_density_a}
\dfrac{|A_i\cap B_r(x)|}{|B_r(x)|}
\le 1- \Big( \dfrac{c_\Phi}{2(c_\Phi+C_\Phi )}\Big)^n
\end{equation}
and
\begin{equation}\label{upper_per_density_a}
\dfrac{P(A_i,B_r(x))}{r^{n-1}} \le
\left(\frac{C_\Phi }{c_\Phi}+\frac12\right) \, n\omega_n,
\end{equation}
where  
\begin{equation}\label{hat_r_0}
\hat r_0:= \min\Big\{r_0,\beta_1,\beta_2\Big\}. 
\end{equation}
Moreover, if 
\begin{equation}\label{kappaNborder}
\kappa_N<\frac{2c_\Phi}{N-1}, 
\end{equation}
then for any $r\in(0,\tilde r_0]$ 
\begin{equation}\label{lower_vol_density_a}
\gamma_N^n \le \dfrac{|A_i\cap B_r(x)|}{|B_r(x)|},
\end{equation}
and
\begin{equation}\label{lower_per_density_a}
c \le \dfrac{P(A_i,B_r(x))}{r^{n-1}},
\end{equation}
where  
\begin{equation}\label{def_of_r0}
\tilde r_0:= \min\Big\{r_0,
\Big(\frac{c_\Phi}{N-1} - \frac{\kappa_N}{2}\Big)^{\frac{1}{n\alpha_1-n+1}}\beta_1,\Big(\frac{c_\Phi}{N-1} - \frac{\kappa_N}{2}\Big)^{\frac{1}{n\alpha_2-n+1}}\beta_2\Big\}  
\end{equation}
and
$$
c:=c(n,N,c_\Phi,C_\Phi ,\kappa_N):= \frac{n\omega_n (2^{1/n} -1)}{2^{1+1/n}}\,\gamma_N^{n-1}.
$$
\end{theorem}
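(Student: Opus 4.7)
The strategy is the classical Luckhaus--Sturzenhecker ``cutting out and filling in with balls'' argument, as carried out in \cite[Theorem 3.6]{BH:2018.siam} for the isotropic multiphase case, adapted to include anisotropies. In each case I construct a competitor partition $\cB$, apply Definition \ref{def:almost_min} on a slightly enlarged ball $B_{r+\epsilon}(x)$, let $\epsilon\to 0^+$, derive a differential inequality for the volume function $v(r)=|A_i\cap B_r(x)|$ (or $w(r)=|B_r(x)|-v(r)$), and integrate.

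\textbf{Upper density bounds.} For the fill-in competitor $B_i=A_i\cup B_r(x)$, $B_k=A_k\setminus B_r(x)$ for $k\ne i$, $|\cA\Delta\cB|=2w(r)$ and, by \eqref{ess_union}--\eqref{ess_differense}, all interior interfaces of $\cB$ inside $B_r$ vanish while new contributions appear only on the sphere $\partial B_r$. Since every interface $(k,\ell)$ of $\cA$ inside $B_r$ involving $A_i$ contributes at least $\phi_k+\phi_\ell\ge 2c_\Phi$ and the sphere contribution is at most $2C_\Phi w'(r)$ (for a.e.\ $r$, by coarea), almost-minimality yields
\begin{equation*}
2c_\Phi\,P(A_i,B_r)\le 2C_\Phi\,w'(r)+\Lambda_1(2w(r))^{\alpha_1}+\Lambda_2(2w(r))^{\alpha_2}.
\end{equation*}
Combining with the isoperimetric inequality $P(A_i^c\cap B_r)=P(A_i,B_r)+w'(r)\ge n\omega_n^{1/n}\,w(r)^{(n-1)/n}$ and absorbing the polynomial error for $r\le\hat r_0$ (this is exactly how $\beta_1,\beta_2$ are chosen in \eqref{hat_r_0}) gives $w'(r)\ge\tfrac{c_\Phi n\omega_n^{1/n}}{c_\Phi+C_\Phi}\,w(r)^{(n-1)/n}$; integration over $(0,r]$ produces \eqref{upper_vol_density_a}. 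The upper perimeter bound \eqref{upper_per_density_a} then follows by selecting $s\in(r,2r)$ with $w'(s)\le n\omega_n s^{n-1}$ (which exists since $w(s)\le\omega_n s^n$) and applying the same inequality.

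\textbf{Lower density bounds under \eqref{kappaNborder}.} For the cut-out competitor $B_i=A_i\setminus B_r$, $B_j=A_j\cup(A_i\cap B_r)$ (for a chosen $j\ne i$), $B_k=A_k$ otherwise, $|\cA\Delta\cB|=2v(r)$ and the analogous interface bookkeeping via \eqref{ess_union}--\eqref{ess_differense} (the $(A_i,A_j)$-interface inside $B_r$ is lost, each $(A_i,A_k)$-interface with $k\ne i,j$ is converted to an $(A_j,A_k)$-interface, and a new spherical interface appears on $A_i\cap\partial B_r$) yields
\begin{equation*}
\int_{\partial^*A_i\cap\partial^*A_j\cap B_r}\!(\phi_i+\phi_j)\,d\cH^{n-1}+\sum_{k\ne i,j}\int_{\partial^*A_i\cap\partial^*A_k\cap B_r}\!(\phi_i-\phi_j)\,d\cH^{n-1}\le 2C_\Phi v'(r)+\Lambda_1(2v(r))^{\alpha_1}+\Lambda_2(2v(r))^{\alpha_2}.
\end{equation*}
The signed second sum is the distinctively anisotropic obstacle. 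Averaging the inequality over $j\ne i$ and using $\partial^*A_i\cap B_r=\bigcup_{k\ne i}(\partial^*A_i\cap\partial^*A_k\cap B_r)$ (up to $\cH^{n-1}$-null sets), together with the bounds $\phi_k+\phi_\ell\ge 2c_\Phi$, $\phi_k\le C_\Phi$ and the definition of $\kappa_N$ to control the mixed terms, I isolate
\begin{equation*}
\bigl(c_\Phi-\tfrac{N-1}{2}\kappa_N\bigr)P(A_i,B_r)\le\bigl(c_\Phi+(N-1)C_\Phi-\tfrac{N-1}{2}\kappa_N\bigr)v'(r)+\Lambda_1(2v(r))^{\alpha_1}+\Lambda_2(2v(r))^{\alpha_2},
\end{equation*}
whose leading coefficient is positive exactly when \eqref{kappaNborder} holds, and whose ratio is $\gamma_N$. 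Combining with the isoperimetric inequality $P(A_i,B_r)+v'(r)\ge n\omega_n^{1/n}v(r)^{(n-1)/n}$ and absorbing the error for $r\le\tilde r_0$ (the role of the factors in \eqref{def_of_r0}) yields $v'(r)\ge \gamma_N\,n\omega_n^{1/n}\,v(r)^{(n-1)/n}$; integration gives \eqref{lower_vol_density_a}. Finally, \eqref{lower_per_density_a} follows by picking $\rho\in(r/2,r)$ with $v'(\rho)\gtrsim v(r)/r\gtrsim\gamma_N^n r^{n-1}$ and combining with the isoperimetric estimate above.

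\textbf{Main obstacle.} The key anisotropic difficulty absent from \cite{BH:2018.siam} is controlling the signed sum $\sum_{k\ne i,j}\int(\phi_i-\phi_j)\,d\cH^{n-1}$ in the cut-out estimate: handling it via the averaging procedure over $j$ so that the coefficient $(N-1)\kappa_N/2$ (rather than the larger $\max_j\|\phi_i-\phi_j\|_{L^\infty(\S^{n-1})}$) appears is precisely what drives the combinatorial form of $\gamma_N$ and the sharp smallness condition \eqref{kappaNborder} needed to keep it positive.
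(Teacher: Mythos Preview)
Your strategy coincides with the paper's: identical competitors (fill-in for the upper bounds, cut-out-and-donate-to-$A_j$ for the lower), the same device of summing the $j$-indexed inequalities to tame the anisotropic cross-terms $P_{\phi_j}(A_i,B_r)-P_{\phi_i}(A_i,B_r)$ through $\kappa_N$, and the same differential-inequality/integration scheme for the volume densities \eqref{upper_vol_density_a} and \eqref{lower_vol_density_a}. The only cosmetic difference is that the paper sums over $j\in I_1:=\{j:\cH^{n-1}(B_{\tilde r_0}\cap\partial^*A_i\cap\partial^*A_j)>0\}$ and afterwards invokes $|I_1|\le N-1$, while you average over all $j\ne i$; both routes land on the same $\gamma_N$.

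Your perimeter-density arguments, however, do not work as written. For \eqref{upper_per_density_a} the paper reads the bound directly off the pre-isoperimetric inequality $c_\Phi P(A_i,B_r)\le C_\Phi\,\cH^{n-1}(A_i^{(0)}\cap\partial B_r)+\text{errors}\le (C_\Phi+\tfrac{c_\Phi}{2})\,n\omega_n r^{n-1}$, with no mean-value radius $s\in(r,2r)$; your detour through $s$ would lose a factor $2^{n-1}$ and miss the stated constant. More seriously, your sketch of \eqref{lower_per_density_a} does not close: a large $v'(\rho)=\cH^{n-1}(A_i\cap\partial B_\rho)$ gives no lower bound on $P(A_i,B_\rho)$, and inserting it into $P(A_i,B_\rho)+v'(\rho)\ge n\omega_n^{1/n}v(\rho)^{(n-1)/n}$ only weakens the conclusion. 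The paper instead derives \eqref{lower_per_density_a} from the two-sided volume bounds \eqref{upper_vol_density_a} and \eqref{lower_vol_density_a} via the relative isoperimetric inequality in $B_r(x)$, after noting that $\gamma_N<c_\Phi/(2(c_\Phi+C_\Phi))$ so that $\gamma_N^n$ is the smaller of the two volume fractions.
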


\begin{proof} 
Without loss of generality, we assume $i=1$ and $A_i\ne\emptyset.$ 
Since $\overline{\p^*A_1}=\p A_1,$ it is sufficient to show  \eqref{upper_vol_density_a}, \eqref{upper_per_density_a}, \eqref{lower_vol_density_a}, \eqref{lower_per_density_a} when $x\in  \p^* A_1.$ For shortness, we write $B_r:=B_r(x).$ 

We start by proving  \eqref{upper_vol_density_a} and \eqref{upper_per_density_a}. Let us show
\begin{equation}\label{eeerrr}
\begin{aligned}
c_\Phi P( A_1^{(0)},B_r) \le & C_\Phi  \cH^{n-1}(A_1^{(0)}\cap \p B_r) \\
& + 2^{\alpha_1-1} \Lambda_1 |A_1^{(0)}\cap B_r|^{\alpha_1}+
2^{\alpha_2-1} \Lambda_2 |A_1^{(0)}\cap B_r|^{\alpha_2} 
\end{aligned} 
\end{equation}
for all $r \in(0,\hat r_0)$ such that 
\begin{equation}\label{good_radiusaaa}
\sum\limits_{j=1}^N \cH^{n-1}(\p B_r\cap \p^* A_j)=0. 
\end{equation}

Indeed, setting
$$
\cB:=(A_1\cup B_r,A_2\setminus B_r,\ldots,A_N\setminus B_r),
$$ 
we have $\cA\Delta\cB\strictlyincluded B_s$ for every $s\in (r,\hat r_0)$ and thus, by almost minimality, the definition \eqref{L_1dist} of  $|\cA\Delta\cB|$ and the essential disjointness of $A_j,$ 
\begin{align}\label{rytur}
0\le & \Per_\Phi(\cB,B_s) - \Per_\Phi(\cA,B_s) +
\Lambda_1|\cA\Delta\cB|^{\alpha_1} +\Lambda_2|\cA\Delta\cB|^{\alpha_2}\no \\
= & P_{\phi_1}(A_1\cup B_r,B_s) - P_{\phi_1}(A_1,B_s) 
+\sum\limits_{j=2}^N \Big(P_{\phi_j}(A_j\setminus B_r,B_s) 
- P_{\phi_j}(A_j,B_s)\Big) \\
& +2^{\alpha_1} \Lambda_1 |B_r\cap A_1^{(0)}|^{\alpha_1}+
2^{\alpha_2} \Lambda_2 |B_r\cap A_1^{(0)}|^{\alpha_2}, \no 
\end{align}
since $B_r\setminus A_1 = B_r\cap A_1^{(0)}$ up to a $\cL^n$-negligible set and 
$|\cA\Delta\cB| = 2|B_r\setminus A_1|= 2|B_r\cap A_1^{(0)}|.$
By \eqref{ess_union} and \eqref{good_radiusaaa},
\begin{equation*}
P_{\phi_1}(A_1\cup B_r,B_s) = P_{\phi_1}(A_1, B_s\setminus \overline{B_r}) +
\int_{A_1^{(0)}\cap \p B_r} \phi_1(\nu_{B_r})d\cH^{n-1},
\end{equation*}
and for any $j=2,\ldots,N,$
\begin{equation}\label{set_operation121}
P_{\phi_j}(A_j\setminus B_r,B_s) =  P_{\phi_j}(A_j,B_s\setminus \overline{B_r}) +
\int_{A_j \cap \p B_r} \phi_j(\nu_{B_r})d\cH^{n-1}.
\end{equation}
Thus  by \eqref{rytur} 
\begin{equation}\label{tanimadim_0}
\begin{aligned}
\sum\limits_{j=1}^N P_{\phi_j}(A_j,B_s) 
\le & \sum\limits_{j=1}^N P_{\phi_j}(A_j,B_s\setminus\overline{B_r}) \\
& + \int_{A_1^{(0)}\cap \p B_r} \phi_1(\nu_{B_r})d\cH^{n-1} + \sum\limits_{j=2}^N \int_{A_j\cap \p B_r} \phi_j(\nu_{B_r})d\cH^{n-1}\\
& +2^{\alpha_1} \Lambda_1 |B_r\cap A_1^{(0)}|^{\alpha_1}+
2^{\alpha_2} \Lambda_2 |B_r\cap A_1^{(0)}|^{\alpha_2}.
\end{aligned}
\end{equation}
By \eqref{Phi_assump},  the essential disjointness of $A_j$  and \eqref{good_radiusaaa} we have
\begin{align*}
&\int_{A_1^{(0)} \cap \p B_r} \phi_1(\nu_{B_r})d\cH^{n-1} + \sum\limits_{j=2}^N  \int_{A_j \cap \p B_r} \phi_j(\nu_{B_r})d\cH^{n-1}  \\
\le & C_\Phi \cH^{n-1}(A_1^{(0)}\cap \p B_r) + C_\Phi  \sum\limits_{j=2}^N \cH^{n-1}(A_j\cap \p B_r) = 2C_\Phi \cH^{n-1}(A_1^{(0)}\cap \p B_r), 
\end{align*}
thus, \eqref{tanimadim_0} and \eqref{good_radiusaaa} imply  
$$
\sum\limits_{j=1}^N P_{\phi_j}(A_j,B_r) \le 
2C_\Phi  \cH^{n-1}(A_1^{(0)} \cap \p B_r) +
2^{\alpha_1}\Lambda_1 |B_r\setminus A_1|^{\alpha_1}+
2^{\alpha_2}\Lambda_2 |B_r\setminus A_1|^{\alpha_2}.
$$ 
By \eqref{Phi_assump}, \eqref{famfor} 
and the essential disjointness of $A_j,$ 
\begin{align*}
\sum\limits_{j=2}^N P_{\phi_j}(A_j,B_r) \ge & 
c_\Phi \sum\limits_{j=2}^N P(A_j,B_r) \ge 
c_\Phi P\Big(\bigcup\limits_{j=2}^N A_j,B_r\Big)  
= c_\Phi P(A_1^{(0)},B_r), 
\end{align*}
and thus  
$
\sum\limits_{j=1}^N P_{\phi_j}(A_j,B_r) \ge 2c_\Phi P(A_1^{(0)},B_r) 
$
so that \eqref{eeerrr} follows from \eqref{hat_r_0}.

To prove \eqref{upper_vol_density_a} we add $c_\Phi \cH^{n-1}(A_1^{(0)} \cap \p B_r)$ to both sides of \eqref{eeerrr} and using $\cH^{n-1}(\p B_r\cap \p^* A_1)=0$ we get
$$
c_\Phi  P(A_1^{(0)}\cap B_r) \le (c_\Phi +C_\Phi )
\cH^{n-1}(A_1^{(0)} \cap \p B_r) + 
2^{\alpha_1-1} \Lambda_1 |A_1^{(0)}\cap B_r|^{\alpha_1}+
2^{\alpha_2-1} \Lambda_2 |A_1^{(0)}\cap B_r|^{\alpha_2}, 
$$
hence by the isoperimetric inequality 
\begin{equation}\label{diff.eq}
\begin{aligned}
c_\Phi  n\omega_{n}^{1/n}|A_1^{(0)}\cap B_r|^{\frac{n-1}{n}} \le &
(c_\Phi +C_\Phi ) \cH^{n-1}(A_1^{(0)} \cap \p B_r)\\
&+ 2^{\alpha_1-1} \Lambda_1 |A_1^{(0)}\cap B_r|^{\alpha_1} + 2^{\alpha_2-1} \Lambda_2 |A_1^{(0)}\cap B_r|^{\alpha_2}. 
\end{aligned}
\end{equation}
By the choice of $\hat r_0 $ in \eqref{hat_r_0} we have, for $l=1,2,$ 
\begin{equation}\label{gretaeee}
2^{\alpha_l-1} \Lambda_l |A_1^{(0)}\cap B_r|^{\alpha_l -\frac{n-1}{n}}\le 2^{\alpha_l-1} \Lambda_l \omega_n^{\alpha_l - \frac{n-1}{n}} \hat r_0^{n\alpha_l-n+1}  \le \frac{c_\Phi  n\omega_n^{1/n}}{4}.
\end{equation}
Inserting \eqref{gretaeee} in \eqref{diff.eq} we obtain 
$$
\dfrac{c_\Phi }{2(c_\Phi +C_\Phi )} \, n\omega_n^{1/n} 
|A_1^{(0)} \cap B_r|^{\frac{n-1}{n}} \le \cH^{n-1}(A_1^{(0)} \cap \p B_r),
$$
and whence, repeating for instance the arguments of the proof of \cite[Eq. 3.19]{BH:2018.siam}, we obtain 
$$
| A_1^{(0)} \cap B_r| \ge \Big( \dfrac{c_\Phi }{2(c_\Phi +C_\Phi )} \Big)^n  \omega_nr^n,
$$
i.e.,
$$
\dfrac{|A_1\cap B_r|}{|B_r|}
\le 1- \Big( \dfrac{c_\Phi }{2(c_\Phi +C_\Phi )} 
\Big)^n.
$$ 
From \eqref{eeerrr} and the definition of $\hat r_0$ for all $r\in(0,\hat r_0]$ we get
$$
P(A_1,B_r) \le \frac{C_\Phi }{c_\Phi } \cH^{n-1}(\p B_r) + 
\frac{2^{\alpha_1-1} \Lambda_1}{c_\Phi }
\,|B_r|^{\alpha_1} + \frac{2^{\alpha_2-1} \Lambda_2}{c_\Phi }
\,|B_r|^{\alpha_2}\le  \left(\frac{C_\Phi }{c_\Phi }+\frac12\right) \, 
n\omega_n  r^{n-1}.
$$
%

Now we prove \eqref{lower_vol_density_a} and \eqref{lower_per_density_a}. Note that assumption \eqref{kappaNborder} implies $\tilde r_0,\gamma_N>0.$ Let us show 
\begin{equation}\label{low_dens_etsasd}
\begin{aligned}
\Big(\frac{c_\Phi }{N-1} - \frac{\kappa_N}{2}\Big) P(A_1,B_r) \le &  C_\Phi  \cH^{n-1}(A_1\cap \p B_r)\\
& + 2^{\alpha_1-1} \Lambda_1 |A_1\cap B_r|^{\alpha_1}+
2^{\alpha_2-1} \Lambda_2 |A_1\cap B_r|^{\alpha_2}
\end{aligned} 
\end{equation}
for all $r\in (0,\tilde r_0)$ such that 
\begin{equation}\label{good_radius_again}
\sum\limits_{j=1}^N \cH^{n-1}(\p^*A_j\cap\p B_r) =0. 
\end{equation}
Set 
$$
I_1:=\{j\in \{2,\ldots, N\}:\,\, 
\cH^{n-1}(B_{\tilde r_0} \cap \p^* A_1\cap \p^* A_j)>0\}.
$$
Since $x\in \p A_1,$  $I_1\ne\emptyset.$ Fix $r\in(0,\tilde r_0);$ for every $j\in I_1$ consider the competitor
$$
\cB^{(j)}:=(A_1\setminus B_r,A_2,\ldots, A_{j-1},A_j\cup (A_1\cap B_r),A_{j+1},\ldots, A_N).
$$
Since $\cB^{(j)}\Delta \cA\strictlyincluded B_s$ for every $s\in (r,\tilde r_0),$ by the almost minimality of $\cA$ (recall that $\tilde r_0\le r_0$) and the equality $|\cA\Delta \cB^{(j)}|=2|A_1\cap B_r|$ one has 
\begin{equation}\label{aas3113}
\begin{aligned}
P_{\phi_1}(A_1,B_s) + P_{\phi_j}(A_j,B_s) \le 
P_{\phi_1}(A_1\setminus B_r,B_s) + P_{\phi_j}(A_j\cup(A_1\cap B_r),B_s) &\\
+2^{\alpha_1}\Lambda_1 |A_1\cap B_r|^{\alpha_1} +
2^{\alpha_2}\Lambda_2 |A_1\cap B_r|^{\alpha_2}.&
\end{aligned} 
\end{equation}
Using the equality
\begin{align*}
P_{\phi_j}(A_j\cup(A_1\cap B_r),B_s) = &
P_{\phi_j}(A_j,B_s) + P_{\phi_j}(A_1,B_r)+ 
\int_{A_1\cap \p B_r} \phi_j(\nu_{B_r})d\cH^{n-1}\\
&-  \int_{B_r\cap \p^*A_1\cap \p^* A_j} \big(\phi_j(\nu_{A_1})+
\phi_j(\nu_{A_j})\big)d\cH^{n-1}, 
\end{align*}
the analogue of \eqref{set_operation121} with $j=1$ and also \eqref{good_radius_again} in \eqref{aas3113} we establish 
\begin{align*}
P_{\phi_1}(A_1,B_s) + P_{\phi_j}(A_j,B_s) \le &
P_{\phi_j}(A_j,B_s) + P_{\phi_j}(A_1,B_r)+ 
\int_{A_1\cap \p B_r} \phi_j(\nu_{B_r})d\cH^{n-1}\\
&-  \int_{B_r\cap \p^*A_1\cap \p^* A_j} \big(\phi_j(\nu_{A_1})+
\phi_j(\nu_{A_j})\big)d\cH^{n-1} \\
&+  P_{\phi_1}(A_1,B_s\setminus B_r) +
\int_{A_1 \cap \p B_r} \phi_1(\nu_{B_r})d\cH^{n-1}\\
&+2^{\alpha_1}\Lambda_1 |A_1\cap B_r|^{\alpha_1} +
2^{\alpha_2}\Lambda_2 |A_1\cap B_r|^{\alpha_2}.
\end{align*}
Hence using $\phi_j(\nu_{A_1})=\phi_j(\nu_{A_j}),$
\begin{align*}
&2 \int_{B_r\cap \p^*A_1\cap \p^* A_j}  \phi_j(\nu_{A_1})d\cH^{n-1}  \le 
P_{\phi_j}(A_1,B_r) - P_{\phi_1}(A_1,B_r)\\
&+ 
\int_{A_1\cap \p B_r}\Big(\phi_1(\nu_{B_r})+\phi_j(\nu_{B_r})\Big)d\cH^{n-1} 
+ 2^{\alpha_1}\Lambda_1 |A_1\cap B_r|^{\alpha_1}
+ 2^{\alpha_2}\Lambda_2 |A_1\cap B_r|^{\alpha_2}. 
\end{align*}
Summing these inequalities in  $j\in I_1$ and using \eqref{Phi_assump} we get
\begin{equation} \label{rtrey} 
\begin{aligned} 
2c_\Phi \sum\limits_{j=2}^N  \cH^{n-1}(B_r\cap \p^*A_1\cap& \p^* A_j)  \le  \sum\limits_{j\in I_1} \big(P_{\phi_j}(A_1,B_r) - P_{\phi_1}(A_1,B_r)\big) \\
& +\sum\limits_{i\in I_1}\int_{A_1\cap \p B_r}\Big(\phi_1(\nu_{B_r})+\phi_j(\nu_{B_r})\Big)d\cH^{n-1}\\
& + |I_1|( 2^{\alpha_1} \Lambda_1 |A_1\cap B_r|^{\alpha_1}+
2^{\alpha_2} \Lambda_2 |A_1\cap B_r|^{\alpha_2}),
\end{aligned} 
\end{equation}
where $|I_1|$ is the number of elements of $I_1.$ 
By the definition of $I_1,$
$$
\sum\limits_{j\in I_1}  \cH^{n-1}(B_r\cap \p^*A_1\cap \p^* A_j)  = P(A_1,B_r),
$$
by the definition of $\kappa_N$ in \eqref{kappaning_def}
$$
\sum\limits_{j\in I_1} \big(P_{\phi_j}(A_1,B_r) - P_{\phi_1}(A_1,B_r)\big) \le \kappa_N |I_1|\,P(A_1,B_r),
$$
and  by \eqref{Phi_assump}
$$
\sum\limits_{i\in I_1}\int_{A_1\cap \p B_r}\Big(\phi_1(\nu_{B_r})+\phi_j(\nu_{B_r})\Big)d\cH^{n-1} \le 2C_\Phi |I_1| \cH^{n-1}(A_1\cap \p B_r).
$$
Therefore, from \eqref{rtrey} we obtain
$$
\Big(\frac{c_\Phi }{|I_1|} - \frac{\kappa_N}{2}\Big)\,P(A_1,B_r) \le C_\Phi \cH^{n-1}(A_1\cap \p B_r) + 2^{\alpha_1-1} \Lambda_1 |A_1\cap B_r|^{\alpha_1}+2^{\alpha_2-1} \Lambda_2 |A_1\cap B_r|^{\alpha_2}.
$$
Since $|I_1|\le N-1,$ inequality \eqref{low_dens_etsasd} follows.

To prove \eqref{lower_vol_density_a} we add $(\frac{c_\Phi }{N-1} - \frac{\kappa_N}{2})\cH^{n-1}(A_1\cap\p B_r)$ to both sides of \eqref{low_dens_etsasd} and get 
\begin{equation}\label{low_dens_etsasd1}
\begin{aligned}
\Big(\frac{c_\Phi }{N-1} - \frac{\kappa_N}{2}\Big) P(A_1\cap B_r) \le & \Big(\frac{c_\Phi }{N-1}  - \frac{\kappa_N}{2} + C_\Phi \Big)\cH^{n-1}(A_1\cap \p B_r)\\
& + 2^{\alpha_1-1} \Lambda_1 |A_1\cap B_r|^{\alpha_1}+
2^{\alpha_2-1} \Lambda_2 |A_1\cap B_r|^{\alpha_2},
\end{aligned}
\end{equation}
By the definition  \eqref{def_of_r0} of $\tilde r_0$  we have $r\le \tilde r_0\le \big(\frac{c_\Phi}{N-1} - \frac{\kappa_N}{2}\big)^{\frac{1}{n\alpha_l-n+1}}\beta_l$ for $l=1,2$ and therefore
\begin{align*}
2^{\alpha_l-1} \Lambda_l |A_1\cap B_r|^{\alpha_l-\frac{n-1}{n}}\le 2^{\alpha_l-1} \Lambda_l |B_{\tilde r_0}|^{\alpha_l-\frac{n-1}{n}} =\Big(\frac{c_\Phi }{N-1}-\frac{\kappa_N}{2}\Big)\,\frac{n\omega_n^{1/n}}{4},\qquad l=1,2, 
\end{align*}
and thus, by \eqref{low_dens_etsasd1} and the isoperimetric inequality,
$$
\frac{c_\Phi  - (N-1)\kappa_N/2}{2c_\Phi + 2(N-1)C_\Phi  - (N-1)\kappa_N} \, 
n\omega_n^{1/n} \,|A_1\cap B_r|^{\frac{n-1}n} \le 
\cH^{n-1}(A_1\cap \p B_r).
$$
Now integrating we get
$$
\gamma_N^n \omega_nr^n\le |A_1\cap B_r| 
$$
and \eqref{lower_vol_density_a} follows. Finally, since 
$$
\frac{c_\Phi }{2c_\Phi +2C_\Phi } >\gamma_N,
$$
from \eqref{upper_vol_density_a}, \eqref{lower_vol_density_a} and the relative isoperimetric inequality we deduce \eqref{lower_per_density_a}.
\end{proof}
 
The following volume-distance comparison appeared in a similar form also in \cite{ATW:1993,BH:2018.siam,LS:1995} and will be used in the proof of the existence  of GMM.

\begin{proposition}\label{prop:funny_estimate}
Given $\theta,r_0>0,$ let $A\in BV(\R^n;\{0,1\})$ be such that   
\begin{equation}\label{daens_per}
\theta r^{n-1} \le P(A,B_r(x)),\qquad r\in (0, r_0], 
\end{equation}
whenever $x\in \p A.$ Then for any $\ell>0$ and $B\in BV(\R^n;\{0,1\})$ one has
\begin{equation}\label{kahurfc}
|B\Delta A|\le \frac{5^n\omega_n}{\theta}\max\Big\{1,\Big(\frac{\ell}{r_0}\Big)^{n-1}\Big\} 
\, P(A)\,\ell + 
\frac{1}{\ell}\,\int_{A\Delta B} \dist(x,\p A)dx. 
\end{equation}
\end{proposition}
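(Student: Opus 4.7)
The estimate is a tubular-neighborhood bound plus a Chebyshev-type tail estimate, in the spirit of \cite{ATW:1993,LS:1995}. The plan is to split
\[
A\Delta B = \big((A\Delta B)\cap S_\ell\big)\cup\big((A\Delta B)\setminus S_\ell\big),\qquad S_\ell:=\{x\in\R^n:\,\dist(x,\p A)<\ell\},
\]
estimate the first piece by $|S_\ell|$ via a Vitali covering argument based on the perimeter density \eqref{daens_per}, and estimate the second piece by Chebyshev's inequality since on that piece $\dist(x,\p A)\ge\ell$.

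\textbf{Step 1: the far part.} If $x\in(A\Delta B)\setminus S_\ell$ then $\dist(x,\p A)\ge\ell,$ hence
\[
|(A\Delta B)\setminus S_\ell|\le \frac{1}{\ell}\int_{(A\Delta B)\setminus S_\ell}\dist(x,\p A)\,dx \le \frac{1}{\ell}\int_{A\Delta B}\dist(x,\p A)\,dx.
\]
This contributes the second term on the right-hand side of \eqref{kahurfc}.

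\textbf{Step 2: the near part via Vitali.} We may assume $\p A\ne\emptyset$ (otherwise $P(A)=0$, and the inequality is either trivial or the integral is infinite). Consider the family $\{B_\ell(x):x\in\p A\}.$ By the Vitali $5r$-covering lemma, there exists an at most countable collection of centres $\{x_i\}\subset\p A$ such that the balls $\{B_\ell(x_i)\}_i$ are pairwise disjoint and
\[
\bigcup_{x\in\p A}B_\ell(x)\subseteq\bigcup_i B_{5\ell}(x_i).
\]
Since $S_\ell\subseteq\bigcup_{x\in\p A}B_\ell(x),$ we get $|S_\ell|\le 5^n\omega_n\,\ell^n\,\#\{x_i\}.$

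\textbf{Step 3: counting the balls via the perimeter density.} Set $\rho:=\min\{\ell,r_0\}\in(0,r_0].$ Assumption \eqref{daens_per} applied at each $x_i\in\p A$ with radius $\rho$ gives
\[
\theta\rho^{n-1}\le P(A,B_\rho(x_i))\le P(A,B_\ell(x_i)).
\]
Because the balls $B_\ell(x_i)$ are pairwise disjoint, summing yields
\[
\#\{x_i\}\cdot\theta\rho^{n-1}\le\sum_i P(A,B_\ell(x_i))\le P(A),
\]
so $\#\{x_i\}\le P(A)/(\theta\rho^{n-1}).$ Combining with Step 2,
\[
|S_\ell|\le\frac{5^n\omega_n}{\theta}\cdot\frac{\ell^n}{\rho^{n-1}}\,P(A)
=\frac{5^n\omega_n}{\theta}\,\ell\,\Bigl(\frac{\ell}{\rho}\Bigr)^{n-1}P(A)
=\frac{5^n\omega_n}{\theta}\max\Bigl\{1,\Bigl(\frac{\ell}{r_0}\Bigr)^{n-1}\Bigr\}P(A)\,\ell,
\]
using $\ell/\rho=\max\{1,\ell/r_0\}.$ Adding the bound of Step 1 yields \eqref{kahurfc}.

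\textbf{Expected obstacle.} No real obstacle; the only point requiring care is the case $\ell>r_0,$ where one cannot apply \eqref{daens_per} with $r=\ell$ directly, and the loss is precisely compensated by the factor $(\ell/r_0)^{n-1}$ on the right-hand side. Keeping $\rho=\min\{\ell,r_0\}$ throughout gives exactly the $\max\{1,(\ell/r_0)^{n-1}\}$ prefactor. One should also note that $P(A)<\infty$ is built into $A\in BV(\R^n;\{0,1\}),$ so that the Vitali family is at most countable and the perimeter additivity over disjoint balls used in Step 3 is legitimate.
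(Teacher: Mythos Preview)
Your proof is correct and follows essentially the same approach as the paper's: split $A\Delta B$ into a near part and a far part, bound the far part by Chebyshev, and control the near part by a Vitali $5r$-covering combined with the perimeter density \eqref{daens_per}. The only cosmetic difference is that the paper treats the cases $\ell\ge r_0$ and $\ell<r_0$ separately, whereas you unify them via $\rho=\min\{\ell,r_0\}$, which is slightly cleaner and yields the $\max\{1,(\ell/r_0)^{n-1}\}$ factor in one stroke.
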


\begin{proof}
We follow \cite[Proposition 4.5]{BH:2018.siam} with minor modifications and we give the details for the convenience of the reader. Define
$$
E:=\{x\in B  \Delta A:\,\, \dist(x,\p A)\le \ell\},
\quad
F:=\{x\in B \Delta A:\,\, \dist(x,\p A_j)\ge \ell\}.
$$
By the Chebyshev inequality,
$$
|F| \le \frac{1}{\ell}\,\int_{F}\dist(x,\p A)dx \le 
\frac{1}{\ell}\, \int_{A\Delta B} \dist(x,\p A)dx.
$$
Let us estimate $|E|.$ By a covering argument, one can find a finite family of disjoint balls $\{B_{\ell}(x_k)\}$ $x_k\in \p A,$  such that $E$ is covered by the family $\{B_{5\ell}(x_k)\}_{k=1}^m.$
If $\ell\ge r_0,$ by \eqref{daens_per} and the disjointness of $\{B_{\ell}(x_k)\}$ (and hence of $\{B_{r_0}(x_k)\}$),  
\begin{align*}
|E| \le & \sum\limits_{k=1}^m  \omega_n (5\ell)^n =\frac{5^n\omega_n \ell^n}{\theta r_0^{n-1}}  
\sum\limits_{k=1}^m \theta r_0^{n-1}\le \frac{5^n\omega_n \ell^n}{\theta r_0^{n-1}}  
\sum\limits_{k=1}^m  P(A_j, B_{r_0}(x_k))\\
\le & 
\frac{5^n\omega_n \ell^n}{\theta  r_0^{n-1}}  
P\Big(A_j, \bigcup\limits_{k=1}^m   B_{r_0}(x_k)\Big) \le 
\frac{5^n\omega_n}{\theta }\Big(\frac{\ell}{r_0}\Big)^{n-1} \, P(A_j)\,\ell.
\end{align*} 
Analogously, if $\ell<r_0,$ then 
$$
|E| \le \frac{5^n\omega_n}{\theta }\, P(A_j)\,\ell.
$$  
Now \eqref{kahurfc} follows from the inequality $|B\Delta A|\le |E|+|F|$ and estimates for $|A|$ and $|B|.$
\end{proof}

\black 

\section{Existence of GMM for bounded partitions} \label{sec:existence_GMM_partition}

Given a norm $\psi$ in $\R^n$ and $E,F\subseteq \R^n$ set
$$
\bar\sigma_\psi(E,F):= \int_{E\Delta F} \dist_\psi(x,\p F)dx.
$$
Note that $\bar\sigma_\psi(E,F)=0$ if $|E\Delta F|=0$ whereas $\bar\sigma_\psi(E,F)=+\infty$
if $\p F =\emptyset$ and $|E\Delta F|>0.$
Moreover,  $X,Y\subseteq \R^n$ are measurable and $\p Y\ne\emptyset,$
\begin{equation*}
\begin{aligned}
\int_{X\Delta Y } \dist_\psi(x,\p Y) dx = \int_X \sdist_\psi(x,\p Y) dx - 
\int_Y\sdist_\psi(x,\p Y) dx\quad \text{if $X\cap Y$ is bounded},\\
\int_{X\Delta Y } \dist_\psi(x,\p Y) dx = \int_{Y^c} \sdist_\psi(x,\p Y) dx - 
\int_{X^c}\sdist_\psi(x,\p Y) dx\quad \text{if $X^c\cap Y^c$ is bounded}.
\end{aligned}
\end{equation*}

Given a family $\Psi:=\{\psi_1,\ldots,\psi_{N+1}\}$ of norms $\psi_i$ in $\R^n,$ and $\cA,\cB\in  \P_b(N+1),$ we set
$$
\sigma_\Psi(\cA,\cB):= \sum\limits_{i=1}^{N+1} \bar\sigma_{\psi_i}(A_i,B_i),
$$
where $N+1\ge2.$ In the literature $\Psi$ is called the set of mobilities. Since $N$ is finite, there exist $0<c_\Psi\le C_\Psi <+\infty$ such that 
\begin{equation}\label{Psi_assump}
c_\Psi \le \psi_i(\nu) \le C_\Psi,\qquad i=1,\ldots,N+1,\qquad \nu\in\S^{n-1}.  
\end{equation}
Observe that for every $\cB\in\P_b(N+1)$ the map $\sigma_\Psi(\cdot,\cB)$ is $L^1(\R^n)$-lower semicontinuous in $\P_b(N+1).$ 
\smallskip

Given families $\Phi:=\{\phi_1,\ldots,\phi_{N+1}\}$ of anisotropies and $\Psi:=\{\psi_1,\ldots,\psi_{N+1}\}$ of mobilities, and ${\bf H}:=(H_1,\ldots,H_{N+1})$ of functions $H_i\in L_\loc^1(\R^n),$ $i=1,\ldots,N,$ and $H_{N+1}\in L^1(\R^n),$ consider the functional 
$\func:\P_b(N+1)\times \P_b(N+1)\times[1,\infty)\to[-\infty,\infty],$
\begin{equation*}
\func(\cE,\cF,\lambda) = \Per_\Phi(\cE) + \force(\cE) + \lambda\sigma_\Psi(\cE,\cF),
\end{equation*}
where 
$$
\force(\cE): = \sum\limits_{i=1}^{N+1} \int_{E_i} H_idx.
$$
Note that $\func(\cdot,\cF;\lambda)$ is well-defined and $L^1(\R^n)$-lower semicontinuous in $\P_b(N+1)$.
Notice that   $\force$ can also be represented as
\begin{equation}\label{forcing}
\force(\cE) = \sum\limits_{j=1}^N \int_{E_j} (H_j- H_{N+1})dx +\int_{\R^n} H_{N+1}dx.
\end{equation}
The functional $\func$ is a generalization of the Almgren-Taylor-Wang functional \cite{ATW:1993} to the case of partitions \cite{BH:2018.siam,Degiorgi:1996} in presence of anisotropies, mobilities and external forces.

The main result of this section is the following, which generalizes \cite[Theorems 4.9 and 5.1]{BH:2018.siam} to the anisotropic case with mobilities; recall that $\kappa_{N+1}$ is defined in \eqref{kappaning_def}. 

\begin{theorem}[\bf Existence of $GMM$]\label{teo:existence_GMM}
Let $\Phi=\{\phi_1,\ldots,\phi_{N+1}\}$ and $\Psi=\{\psi_1,\ldots,\psi_{N+1}\}$ be families of anisotropies and mobilities, respectively.
Suppose that 
\begin{equation}\label{Phi_condition}
\kappa_{N+1}<\frac{2c_\Phi}{N}, 
\end{equation}
and ${\bf H}=(H_1,\ldots,H_{N+1})$ satisfies
\begin{equation}\label{H_condition}
\begin{cases}
\text{$H_i\in L_\loc^p(\R^n),$ $i=1,\ldots,N+1,$ for some $p>n$ and 
$H_{N+1}\in L^1(\R^n);$ }\\[1mm]
\text{$\exists R>0$ s.t. $H_i\ge H_{N+1}$ a.e. in $\R^n\setminus B_R(0)$ for $i=1,\ldots,N.$}
\end{cases}
\end{equation}
Then for every $\cG\in \P_b(N+1),$  $GMM(\func,\cG)$ is non empty. 
Moreover,  there exists a constant ${\rm C}=
{\rm C}(N,n,\Phi,\Psi,{\bf H},\cG)>0$ such that  for any $\cM\in GMM(\func,\cG),$ 
\begin{equation}\label{hulder_est}
|\cM(t)\Delta \cM(t')|\le  {\rm C}\,|t-t'|^{\frac{1}{n+1}},\qquad t,t'>0,\,\,|t-t'|<1
\end{equation}
and
\begin{equation}\label{dddid}
\bigcup\limits_{j=1}^N M_j(t) \subseteq\, D:=\text{closed convex hull of}\,\,\hull{\cG}\cup B_R
\qquad \forall t\ge0 
\end{equation}
and $B_R$ is not present in \eqref{dddid} if ${\bf H}\equiv0$. 
In addition, if $\sum\limits_{j=1}^{N+1} 
|\overline{G_j}\setminus G_j| = 0,$ then \eqref{hulder_est} holds for any $t,t'\ge0$ with $|t-t'|<1.$
\end{theorem}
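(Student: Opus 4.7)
The plan is to fix $\lambda\geq 1$, inductively construct the iterated minimizers $\cL(\lambda,k)$ of Definition \ref{def:GMM}, establish a uniform $L^1$-H\"older estimate for the piecewise constant discrete flow $t\mapsto\cL(\lambda,\lfloor\lambda t\rfloor)$, and extract by an Ascoli-type diagonal argument a subsequence $\lambda_h\to\infty$ converging to a GMM. The exponent $1/(n+1)$ will emerge from optimally balancing two scales in the volume--distance inequality of Proposition \ref{prop:funny_estimate}, fed with the density bounds produced by Theorem \ref{teo:density_est}.

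At each step, existence of a minimizer of $\func(\cdot,\cL(\lambda,k),\lambda)$ follows from the direct method: coercivity comes from \eqref{per_vs_anis_per} and \eqref{Psi_assump} combined with the decomposition \eqref{forcing} (the constant $\int H_{N+1}$ drops out, and outside $B_R$ the sign condition in \eqref{H_condition} gives nonnegativity), while $L^1$-lower semicontinuity of $\Per_\Phi$ and $\sigma_\Psi(\cdot,\cL(\lambda,k))$ together with Proposition \ref{prop:compactness} close the argument. For the containment $\bigcup_{j=1}^N L_j(\lambda,k)\subseteq D$ I would argue by induction on $k$: replacing each $L_j(\lambda,k+1)$ ($j\leq N$) by $L_j(\lambda,k+1)\cap D$ and absorbing the excess into phase $N{+}1$ yields a competitor whose $\Per_\Phi$ is not larger (intersection with the convex $D$), whose mobility terms are not larger (since $L_j(\lambda,k)\subseteq D$ forces the symmetric differences to shrink), and whose forcing is not larger ($H_j\geq H_{N+1}$ outside $B_R\subseteq D$); minimality forces the excision to be negligible.

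Testing $\cL(\lambda,k+1)$ against arbitrary $\cB$ with $\cA\Delta\cB\Subset B_r$ and bounding
\[
\lambda|\sigma_\Psi(\cA,\cL(\lambda,k)){-}\sigma_\Psi(\cB,\cL(\lambda,k))|\leq \lambda C_\Psi\diam(D)|\cA\Delta\cB|, \quad |\force(\cB){-}\force(\cA)|\leq C\|{\bf H}\|_{L^p(D)}|\cA\Delta\cB|^{1-1/p}
\]
exhibits $\cL(\lambda,k+1)$ as a $(\Lambda_1,\Lambda_2,r_0,1,1{-}1/p)$-almost-minimizer with $\Lambda_1\asymp\lambda$, $\Lambda_2$ independent of $\lambda$, and $1{-}1/p>(n{-}1)/n$ because $p>n$. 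Since assumption \eqref{Phi_condition} is exactly \eqref{kappaNborder} for the $(N{+}1)$-partition, Theorem \ref{teo:density_est} delivers uniform density estimates at scale $\tilde r_0\asymp 1/\lambda$ with constants independent of $\lambda$. Choosing $\cB=\cL(\lambda,k)$ in the minimality test and telescoping yields the dissipation bound $\sum_{k\geq 0}\sigma_\Psi(\cL(\lambda,k+1),\cL(\lambda,k))\leq C(\cG,{\bf H})/\lambda$ plus a uniform perimeter bound on each $\cL(\lambda,k)$. Feeding these into Proposition \ref{prop:funny_estimate} with $A=L_j(\lambda,k)$, $B=L_j(\lambda,k+1)$, summing over $j$ and telescoping over $k\in[\lfloor\lambda t\rfloor,\lfloor\lambda t'\rfloor)$ gives, for every $\ell\geq r_0\asymp 1/\lambda$,
\[
|\cL(\lambda,\lfloor\lambda t'\rfloor)\Delta\cL(\lambda,\lfloor\lambda t\rfloor)|\leq C\lambda(t'-t)\frac{\ell^n}{r_0^{n-1}}+\frac{C}{c_\Psi\lambda\ell},
\]
and the choice $\ell=(r_0^{n-1}/(\lambda^2(t'-t)))^{1/(n+1)}$ balances the two terms while making every $\lambda$-power cancel (here the coincidence $r_0\asymp 1/\lambda$ is essential), leaving the uniform bound $C(t'-t)^{1/(n+1)}$.

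The discrete interpolants are thus uniformly $L^1$-equicontinuous on bounded time intervals and, by \eqref{dddid}, equibounded in $\P_b(N+1)$; a Cantor diagonal extraction on a countable dense set of times combined with Proposition \ref{prop:compactness} produces $\lambda_h\to\infty$ and a limit $\cM\in GMM(\func,\cG)$ inheriting both \eqref{hulder_est} and \eqref{dddid} by lower semicontinuity of the $L^1$-distance. Under the assumption $\sum_j|\overline{G_j}\setminus G_j|=0$, a careful inspection of the very first step (in which $\cG$ has negligible fat boundary) shows that the same scaling persists across $k=0$, which extends \eqref{hulder_est} down to $t=0$. The main technical obstacle is the scaling balance of the previous paragraph: one must track simultaneously the three $\lambda$-dependences (total dissipation $\lesssim 1/\lambda$, discrete time step $1/\lambda$, and minimality scale $r_0\asymp 1/\lambda$) and verify that they combine through Proposition \ref{prop:funny_estimate} to produce an exponent $1/(n+1)$ that is genuinely independent of $\lambda$.
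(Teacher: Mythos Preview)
Your proposal is correct and follows essentially the same route as the paper: existence of step minimizers via the direct method and convex truncation by $D$, almost-minimality with $\Lambda_1\asymp\lambda$ and $\Lambda_2$ independent of $\lambda$ feeding Theorem~\ref{teo:density_est} (assumption \eqref{Phi_condition} being precisely \eqref{kappaNborder} for $N{+}1$ phases), then Proposition~\ref{prop:funny_estimate} applied with $r_0\asymp 1/\lambda$ and $\ell\asymp \lambda^{-1}|t-t'|^{-1/(n+1)}$, telescoped against the dissipation bound, to produce the uniform discrete H\"older estimate and finally a diagonal extraction. The only cosmetic differences are that the paper plugs in $\ell=r_\lambda|t-t'|^{-1/(n+1)}$ directly rather than deriving it by optimization, and that for the extension to $t=0$ it invokes the convergences $\cG(\lambda,1)\to\cG$, $\Per_\Phi(\cG(\lambda,1))\to\Per_\Phi(\cG)$, $\lambda\sigma_\Psi(\cG(\lambda,1),\cG)\to 0$ (which use the negligible-boundary hypothesis) rather than your more informal ``same scaling persists across $k=0$''.
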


\begin{proof}
We give only few details of the proof since it can be done following the arguments of the proofs of \cite[Theorems 4.9 and 5.1]{BH:2018.siam}.  

{\it Step 1: Existence of minimizers.} Given $\cA\in \P_b(N+1)$ and $\lambda\ge1,$ the problem 
\begin{equation*} 
\inf\limits_{\cB\in \P_b(N+1)} \func(\cB,\cA;\lambda) 
\end{equation*}
has a solution. Moreover, every minimizer $\cA(\lambda)=(A_1(\lambda),\ldots, A_{N+1}(\lambda))$ 
satisfies the bound 
\begin{equation*}
\bigcup\limits_{i=1}^N A_i(\lambda)\subseteq\,\, \text{closed convex hull of} \,\,\hull{\cA}\cup B_R(0).
\end{equation*}
We omit the proof since it is proven along the same lines as \cite[Theorem 4.2]{BH:2018.siam} using the anisotropic  Comparison Theorem with convex sets\footnote{If $E\in BV(\R^n;\{0,1\}),$ then $ P_\phi(E) \ge P_\phi(E\cap C)$ for every anisotropy $\phi$ and every closed convex set  $C\subset\R^n.$  }   and the inequality $\dist_\psi(\cdot,E_0)>0$ in any $F\subset \R^n\setminus \overline{E_0}.$  

{\it Step 2: Density estimates for minimizers.} Let $\cA\in \P_b(N+1)$ satisfy $\hull{\cA}\subset D$ and  set 
\begin{equation}\label{lambda1_lambda2}
\Lambda_1:=\lambda\max\limits_{1\le j\le N+1} \, (\diam_{\psi_j} D +2),\qquad 
\Lambda_2:= N^{1/p}\max\limits_{1\le j\le N} \|H_j-H_{N+1}\|_{L^p(D_1)}, 
\end{equation}
where $D_1:=\{x\in\R^n:\,\, \dist(x,D)\le1\}.$
Let $\lambda \ge1$ and $\cA(\lambda)\in \P_b(N+1)$  be a minimizer of $\func(\cdot, \cA;\lambda).$  Then for every $i\in \{1,\ldots,N+1\}$  either $\p A_i(\lambda)$ is empty or  for any $x\in \p A_i(\lambda)$ and 
\begin{equation}\label{youtube_tarmogi}
r\in \Big(0, \min\Big\{1,\Big(\frac{c_\Phi}{N} - \frac{\kappa_{N+1}}{2}\Big)\,
\frac{n}{4\Lambda_1}, 
\Big[\Big(\frac{c_\Phi}{N} - \frac{\kappa_{N+1}}{2}\Big)\,\frac{n\omega_n^{1/p}}{2^{2-1/p}\Lambda_2}
\Big]^{\frac{p}{p-n}}\Big\}\Big]
\end{equation}
one has
\begin{equation}\label{eq:vol.density_est}
\Big(\frac{2c_\Phi - N\kappa_{N+1}}{2c_\Phi +2NC_\Phi- N\kappa_{N+1}}\Big)^n 
\le \dfrac{|A_i(\lambda)\cap B_r(x)|}{|B_r(x)|}
\le 1- \Big( \dfrac{c_\Phi}{2(c_\Phi+C_\Phi)}\Big)^n
\end{equation}
and 
\begin{equation}\label{eq:per.density_est}
c^\Phi \le \frac{P(A_i(\lambda),B_r(x))}{r^{n-1}} 
\le \left(\frac{C_\Phi}{c_\Phi}+\frac12\right) \, n\omega_n,
\end{equation}
where $\kappa_{N+1}$ is given by \eqref{kappaning_def} and
$$
c^\Phi = c^\Phi(N,n):= \frac{n\omega_n(2^{1/n} -1)}{2^{1+1/n}}\,\left(\frac{2c_\Phi - N\kappa_{N+1}}{2c_\Phi+ 2NC_\Phi - N\kappa_{N+1}} \right)^{n-1}.
$$
The proof is analogous to the the proof of \cite[Theorem 4.6]{BH:2018.siam}: we only show that
$$
\text{$\cA(\lambda)$ is a $(\Phi,\Lambda_1,\Lambda_2, 1, 1-1/p)$-minimizer,}
$$
and hence \eqref{eq:vol.density_est}-\eqref{eq:per.density_est} follow from Theorem \ref{teo:density_est}. 
Let $\cC\in\P_b(N+1)$ be such that $\cC\Delta \cA(\lambda) \strictlyincluded B_\rho(x)$ with $\rho\in (0,1).$ By the minimality of $\cA(\lambda),$ 
\begin{align*}
\Per_\Phi(\cA(\lambda),B_\rho(x)) \le  & \Per_\Phi(\cC,B_\rho(x)) \\
&+\lambda\sum\limits_{j=1}^{N+1}  
\int_{C_j\Delta A_j(\lambda)}
\dist_{\psi_j}(x,\p A_j) dx
+ \sum\limits_{j=1}^{N}  
\int_{C_j\Delta A_j(\lambda)}|H_j-H_{N+1}|dx. 
\end{align*}
By Step 1, $\hull{\cA(\lambda)}\subseteq D,$  thus 
$$
\dist_{\psi_j}(z,\p A_j) \le \diam_{\psi_j} D +2\rho\qquad \text{for all $j=1,\ldots,N+1$ and $z\in \cC\Delta \cA(\lambda),$}
$$
where  $\diam_{\psi_j}$ is the $\psi_j$-diameter of a set.
Then, since $\cC\Delta\cA(\lambda)\subset D_1,$
\begin{align*}
\sum\limits_{j=1}^{N+1}  
\int_{C_j\Delta A_j(\lambda)}
\dist_{\psi_j}(x,\p A_j)\,dx \le &\max\limits_{1\le j\le N+1} \,\,(\diam_{\psi_j} D +2)\,|\cC\Delta \cA(\lambda)|  
\end{align*}
and
\begin{align*}
\sum\limits_{j=1}^{N}  
\int_{C_j\Delta A_j(\lambda)}|H_j-H_{N+1}|dx\le& 
\sum\limits_{j=1}^{N}  
|C_j\Delta A_j(\lambda)|^{1-1/p} \|H_j-H_{N+1}\|_{L^p(D_1)}
\\
\le &
N^{1/p}\max\limits_{1\le j\le N} \|H_j-H_{N+1}\|_{L^p(D_1)}\,
|\cC\Delta \cA(\lambda)|^{1-1/p}.
\end{align*}
Thus,
$$
\Per_\Phi(\cA(\lambda),B_\rho(x)) \le \Per_\Phi(\cC,B_\rho(x)) + 
\Lambda_1 |\cC\Delta\cA(\lambda)|+
\Lambda_2 |\cC\Delta\cA(\lambda)|^{1-1/p}.
$$
  
{\it Step 3: Existence of GMM.}  
Given $\lambda\ge1$ and $k\in \N_0$ we  define $\cG(\lambda,k)$ recursively as: $\cG(\lambda,0)=\cG$ and
$$
\func(\cG(\lambda,k),\cG(\lambda,k-1),\lambda)
=\min\limits_{\cA\in \P_b(N+1)} \func(\cA,\cG(\lambda,k-1),\lambda).
$$
Since $\func(\cG(\lambda,k),\cG(\lambda,k-1),\lambda) \le \func(\cG(\lambda,k-1),\cG(\lambda,k-1),\lambda),$ 
we have  
\begin{align}\label{dar_monoton}
\Per_\Phi( \cG(\lambda,k) )+\force( \cG(\lambda,k)) + &
\lambda \sigma_\Psi(\cG(\lambda,k),\cG(\lambda,k-1))\no \\ 
&\le   \Per_\Phi(\cG(\lambda,k-1))+ \force( \cG(\lambda,k-1)) .
\end{align}
Thus, the map  
$
k\in\N_0\mapsto \Per_\Phi(\cG(\lambda,k)) + \force( \cG(\lambda,k))
$
is non-increasing for any $\lambda\ge1.$ In particular,
\begin{align}\label{per_estimatesaa}
\Per_\Phi(\cG(\lambda,k))\le &\Per_\Phi(\cG(\lambda,0)) +
\sum\limits_{j=1}^N \int_{G_j(\lambda,k)\Delta G_j(\lambda,0)} 
|H_j-H_{N+1}|dx \no \\
\le & \Per_\Phi(\cG) +
\sum\limits_{j=1}^N \|H_j-H_{N+1}\|_{L^1(D)}=:\mu_0,\qquad k\ge0  
\end{align}
and
\begin{equation}\label{dddid1}
\bigcup\limits_{j=1}^N G_j(\lambda,k) \subseteq\, D
\qquad \text{for all $\lambda\ge1,$ and $k\ge0.$}
\end{equation}

Fix $t,t'>0$ with $0<t-t'<1.$ Let $\lambda>1$ be so large (depending on $t,$ $t',$ $n,$ $N,$ ${\bf H},$ $\Psi$ and $\Phi$) that  setting $k_0=[\lambda t'],$ $m_0=[\lambda t],$  one has $m_0 \ge k_0 +3\ge4$ and  
$$
r_\lambda:=\min\Big\{1,\Big(\frac{c_\Phi}{N} - \frac{\kappa_{N+1}}{2}\Big)\,
\frac{n}{4\Lambda_1}, 
\Big[\Big(\frac{c_\Phi}{N} - \frac{\kappa_{N+1}}{2}\Big)\,\frac{n\omega_n^{1/p}}{2^{2-1/p}\Lambda_2}
\Big]^{\frac{p}{p-n}}\Big\}=\frac{\gamma}{\lambda},
$$
where $\Lambda_1$ and $\Lambda_2$ are given in \eqref{lambda1_lambda2}, and recalling \eqref{Phi_condition},
$$
\gamma:=\Big(\frac{c_\Phi}{N} - \frac{\kappa_{N+1}}{2}\Big)\,
\frac{n}{4\max\limits_{1\le j\le N+1} \, (\diam_{\psi_j} D +2)}>0.
$$
By \eqref{eq:per.density_est} for such $\lambda$ and for any $k\ge1$ any minimizer $\cG(\lambda,k)$ satisfies  
$$
P(G_j(\lambda,k),B_r(x))\ge c^\Phi r^{n-1}
$$
for any $x\in \p G_j(\lambda,k)$ and $r\in(0,r_\lambda)$ provided $\p G_j(\lambda,k)$ is non-empty. Therefore, by Proposition \ref{prop:funny_estimate} applied with $r_0=r_\lambda,$  $\theta=c^\Phi,$ $A=G_j(\lambda,k-1),$   $B=G_j(\lambda,k) $ and $\ell=r_\lambda|t-t'|^{-\frac{1}{n+1}}>r_\lambda,$ for any $j\in\{1,\ldots,N+1\}$ and  $k\in\{k_0+1,\ldots,m_0\},$ we have
$$
\begin{aligned}
|\cG(\lambda,k-1)\Delta\cG(\lambda,k)| \le & \frac{5^n\omega_n \gamma }{\lambda c^\Phi |t-t'|^{\frac{n}{n+1}}} \Per(\cG(\lambda,k-1))\\
& +  \frac{\lambda |t-t'|^{\frac{1}{n+1}}}{\gamma}\,\sigma(\cG(\lambda,k),\cG(\lambda,k-1)).
\end{aligned}
$$
Now the bounds \eqref{Phi_assump}, \eqref{Psi_assump} and  \eqref{dar_monoton} imply 
$$
\begin{aligned}
&|\cG(\lambda,k-1)\Delta\cG(\lambda,k)| \le   \frac{5^n\omega_n \gamma}{\lambda c_\Phi c^\Phi  |t-t'|^{\frac{n}{n+1}}} \Per_\Phi(\cG(\lambda,k-1))\\
& +  \frac{|t-t'|^{\frac{1}{n+1}}}{\gamma c_\Psi}\,\Big(\Per_\Phi(\cG(\lambda,k-1)) + \force(\cG(\lambda,k-1))- \Per_\Phi(\cG(\lambda,k) -\force(\cG(\lambda,k))\Big).
\end{aligned}
$$
Summing this inequality in $k\in\{k_0+1,\ldots,m_0\},$ we obtain
\begin{align}\label{ppppoiu}
&|\cG(\lambda,[\lambda t])\Delta \cG(\lambda,[\lambda t'])|\le 
\sum\limits_{k=k_0+1}^{m_0} 
|\cG(\lambda,k-1)\Delta \cG(\lambda,k)| \no \\
\le  &\frac{5^n\omega_n\gamma }{\lambda c_\Phi c^\Phi  |t-t'|^{\frac{n}{n+1}}} 
\,\sum\limits_{k=k_0+1}^{m_0} \Per_\Phi(\cG(\lambda,k-1)) \\
&+\frac{|t-t'|^{\frac{1}{n+1}}}{\gamma c_\Psi }\,\Big(\Per_\Phi(\cG(\lambda,k_0)) + \force(\cG(\lambda,k_0))- \Per_\Phi(\cG(\lambda,m_0) -\force(\cG(\lambda,m_0))\Big).\no
\end{align}
By \eqref{per_estimatesaa}
$$
\sum\limits_{k=k_0+1}^{m_0} \Per_\Phi(\cG(\lambda,k-1)) \le \mu_0(m_0-k_0)\le \lambda \mu_0\Big(|t-t'| +\frac1\lambda\Big) 
$$
and 
$$
\begin{aligned}
\Per_\Phi(\cG(\lambda,k_0))+\force(\cG(\lambda,k_0))  &- \Per_\Phi(\cG(\lambda,m_0))-\force(\cG(\lambda,m_0))\\
& \le \Per_\Phi(\cG(\lambda,k_0))+\sum\limits_{j=1}^N\|H_j-H_{N+1}\|_{L^1(D)} \le 2\mu_0.
\end{aligned}
$$
Thus, from \eqref{ppppoiu} we get 
\begin{equation}\label{eq:muhim_holder}
|\cG(\lambda,[\lambda t])\Delta \cG(\lambda,[\lambda t'])|\le
{\rm C} |t-t'|^{\frac{1}{n+1}} + \widetilde{\rm C} |t-t'|^{-\frac{n}{n+1}}
\lambda^{-1}, 
\end{equation}
where 
$$
{\rm C}:= \frac{5^n\omega_n \gamma  \mu_0}{c_\Phi c^\Phi} + \frac{2\mu_0}{\gamma c_\Psi }  \quad 
 {\rm and}\quad \widetilde{\rm C}:= \frac{5^n\omega_n \gamma  \mu_0}{c_\Phi c^\Phi}. 
$$
The remaining part of the proof is as the proofs of \cite[Theorems 4.9 and 5.1]{BH:2018.siam}. 
We note here that if $\cG\in \P_b(N+1)$ satisfies $\sum\limits_{j=1}^{N+1}|\overline{G_j}\setminus G_j|=0,$ then $\frac{1}{n+1}$-H\"olderianity of GMM at time $t=0$ follows from the relations 
$$
\lim\limits_{\lambda\to+\infty} |\cG(\lambda,1)\Delta \cG|=0,\qquad \lim\limits_{\lambda\to+\infty} \Per_\Phi(\cG(\lambda,1))=\Per_\Phi(\cG),
$$
$$
\lim\limits_{\lambda\to+\infty} \lambda\sigma_\Psi(\cG(\lambda,1), \cG)=0,\qquad \lim\limits_{\lambda\to+\infty} \force(\cG(\lambda,1))=\force(\cG) 
$$
whose proofs can be done following the arguments of \cite[Proposition 4.5]{BH:2018.siam}.
\end{proof}

\subsection{Two-phase case}
 
When $N=1,$ repeating the arguments of \cite{LS:1995} in our more general setting, we can improve the H\"older exponent of GMM to $1/2$ without any restriction on the anisotropies.  

\begin{theorem}
\label{teo:existence_GMM2f}
Let $\Phi=(\phi_1,\phi_2)$ and $\Psi=(\psi_1,\psi_2),$ and assume that ${\bf H}=(H_1,H_2)$ satisfies \eqref{H_condition} with $N=1.$
Then for every $\cG\in \P_b(2),$  $GMM(\func,\cG)$ is non empty. Moreover, there exists a constant ${\rm C}={\rm C}(n,\Phi,\Psi,{\bf H},\Per_\Phi(\cG))>0$ such that  for any $\cN\in GMM(\func,\cG),$ 
\begin{equation}\label{hulder_est2} 
|\cN(t)\Delta \cN(t')|\le  {\rm C}\,|t-t'|^{1/2},\qquad t,t'>0,\,\,|t-t'|<1
\end{equation}
and
\begin{equation*} 
 N_1(t) \subseteq\, \text{closed convex hull of}\,\,G_1\cup B_R
\qquad \forall t\ge0.
\end{equation*}
In addition, if $|\overline{G_1}\setminus G_1| = 0,$ then \eqref{hulder_est2} holds for any $t,t'\ge0$ with $|t-t'|<1.$
\end{theorem}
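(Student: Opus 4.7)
The plan is to follow the scheme of the proof of Theorem~\ref{teo:existence_GMM} specialized to $N = 1$, replacing only the $\frac{1}{n+1}$-Hölder bound on the discrete increments by a $\frac12$-bound in the spirit of Luckhaus-Sturzenhecker~\cite{LS:1995}. As in Step~1 of that proof, the direct method produces for every $\lambda \ge 1$ minimizers $\cG(\lambda, k)$ of $\func(\cdot, \cG(\lambda, k-1), \lambda)$, with $G_1(\lambda, k) \subseteq$ closed convex hull of $G_1 \cup B_R$, and the monotonicity identity
$$
\Per_\Phi(\cG(\lambda, k)) + \force(\cG(\lambda, k)) + \lambda\sigma_\Psi(\cG(\lambda, k), \cG(\lambda, k-1)) \le \Per_\Phi(\cG(\lambda, k-1)) + \force(\cG(\lambda, k-1))
$$
yields $\sup_k \Per_\Phi(\cG(\lambda, k)) \le \mu_0$. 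Importantly, the density estimates of Theorem~\ref{teo:density_est} apply without any compatibility condition on the anisotropies when $N = 1$: the only competitor in the lower-density argument transfers $A_1 \cap B_r$ into $A_2$, and since $\p^* A_1 = \p^* A_2$ up to $\cH^{n-1}$-null sets, the minimality inequality collapses to $\int_{\p^* A_1 \cap B_r}(\phi_1 + \phi_2)(\nu_{A_1})\,d\cH^{n-1} \le \int_{A_1 \cap \p B_r}(\phi_1 + \phi_2)(\nu_{B_r})\,d\cH^{n-1} + \text{lower order}$, so the $\kappa$-correction disappears. This produces $c_0 r^{n-1} \le P(G_1(\lambda, k), B_r(x)) \le C_0 r^{n-1}$ for $x \in \p G_1(\lambda, k)$ and $r \le r_\lambda = O(1/\lambda)$, together with the corresponding volume-density bounds.

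The key new ingredient is an $L^2$-type bound on the increments: for $\lambda$ large enough and every $k \ge 1$,
$$
|\cG(\lambda, k)\Delta \cG(\lambda, k-1)|^2 \le C\,\sigma_\Psi(\cG(\lambda, k), \cG(\lambda, k-1)),
$$
with $C$ depending only on $n, \Phi, \Psi, \mathbf{H}$ and $\mu_0$. The plan is to obtain this by combining the layer-cake identity $\sigma_\Psi(B, A) = \int_0^\infty |\{x \in B\Delta A : d_\psi(x, \p A) > s\}|\,ds$ with the tubular estimate $|\{d_\psi(\cdot, \p A) \le s\}| \le C' P(A)\,s$ for $s \le r_\lambda$, which in turn follows from the density estimates via a Vitali covering argument (as in the proof of Proposition~\ref{prop:funny_estimate}). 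Indeed, $|B\Delta A|^2 \le 2 C' P(A)\,\sigma_\Psi(B, A)$ as long as $|B\Delta A|/P(A) \lesssim r_\lambda$; in the complementary regime, Proposition~\ref{prop:funny_estimate} evaluated at $\ell = r_\lambda$, together with the two-phase identity $\p A_1 = \p A_2$, bounds $|B\Delta A|$ linearly by $\lambda\sigma_\Psi(B, A)$ up to a term of order $1/\lambda$ that is absorbed after summation by the monotonicity.

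Once the $L^2$-increment bound is in hand, a discrete Cauchy-Schwarz argument finishes the job: setting $k_0 = [\lambda t']$ and $m_0 = [\lambda t]$, the telescoping $\lambda\sum_{j=k_0+1}^{m_0} \sigma_\Psi(\cG(\lambda, j), \cG(\lambda, j-1)) \le 2\mu_0$ produces
$$
|\cG(\lambda, m_0)\Delta \cG(\lambda, k_0)| \le \sum_{j=k_0+1}^{m_0}|\cG(\lambda, j)\Delta \cG(\lambda, j-1)| \le \sqrt{m_0 - k_0}\,\sqrt{\tfrac{2 C\mu_0}{\lambda}} \le C''\sqrt{|t - t'|}.
$$
The limiting argument — diagonal extraction of $\lambda_h \to +\infty$ with $\cG(\lambda_h, [\lambda_h t]) \to \cN(t)$ in $L^1(\R^n)$ for every $t$ in a countable dense subset of $[0, +\infty)$, uniform-continuity extension via the bound just established, and the continuity at $t = 0$ in the case $|\overline{G_1}\setminus G_1| = 0$ — is carried out verbatim as in the final part of the proof of Theorem~\ref{teo:existence_GMM}. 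The hard part will be the sharper $L^2$-increment bound, in particular the far-field regime where the optimal $\ell$ in Proposition~\ref{prop:funny_estimate} exceeds $r_\lambda$: paying the covering factor $(\ell/r_\lambda)^{n-1}$ would downgrade the exponent back to $\frac{1}{n+1}$, so the two-phase geometry (a single effective interface carrying both anisotropies) must be exploited to bypass it.
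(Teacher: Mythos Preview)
Your overall plan and the reduction to a single effective interface $\phi=\phi_1+\phi_2$ are correct, and you correctly isolate the obstruction: with only the general density estimates of Theorem~\ref{teo:density_est} you get $r_\lambda=O(1/\lambda)$, and in the ``far-field'' regime your proposal does not close. Concretely, evaluating Proposition~\ref{prop:funny_estimate} at $\ell=r_\lambda\sim 1/\lambda$ gives, for each step,
\[
|\cG(\lambda,k)\Delta\cG(\lambda,k-1)|\ \le\ \tfrac{C}{\lambda}P(\cG(\lambda,k-1))\ +\ C\lambda\,\sigma_\Psi(\cG(\lambda,k),\cG(\lambda,k-1)),
\]
and summing over $[\lambda t']<k\le[\lambda t]$ with $\lambda\sum_k\sigma_\Psi\le\mu_0$ yields only an $O(1)$ bound, not $|t-t'|^{1/2}$. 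Your hoped-for uniform $L^2$-increment inequality $|\cG(\lambda,k)\Delta\cG(\lambda,k-1)|^2\le C\,\sigma_\Psi$ therefore cannot be obtained from the tubular estimate at scale $1/\lambda$ alone: the layer-cake argument only controls the near-field $\{d_\psi\le r_\lambda\}$, and nothing you wrote prevents a nontrivial amount of mass in $E_\lambda\Delta E_0$ from sitting at distance $\gg 1/\lambda$ from $\partial E_0$.

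What the paper does --- and what your last sentence gestures at without supplying --- is to rule out that far-field mass directly. In the two-phase setting, for a ball $B_r(x_0)$ entirely on one side of $\partial E_0$ the signed-distance term $\int_{E_\lambda\cap B_r}\tilde d_\psi^{E_0}$ in the minimality inequality has a favorable sign and can be \emph{dropped} (this is exactly where the single-interface structure is used and why the argument fails for $N\ge2$). This yields an \emph{unconstrained} density estimate at such points, with no upper bound on $r$ coming from $\lambda$. Feeding that back into the minimality inequality gives the Luckhaus--Sturzenhecker $L^\infty$ bound
\[
\sup_{x\in E_\lambda\Delta E_0} d_\psi^{E_0}(x)\ \le\ C_1\,\lambda^{-1/2},
\]
and from this one deduces density estimates for \emph{all} $x\in\partial E_\lambda$ up to radii $r\le C_3\lambda^{-1/2}$. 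With the improved radius $r_0=C_3\lambda^{-1/2}$ one can take $\ell=(\lambda|t-t'|^{1/2})^{-1}\le r_0$ in Proposition~\ref{prop:funny_estimate} for $\lambda$ large, and the straight telescoping sum (no Cauchy--Schwarz needed) gives the $|t-t'|^{1/2}$ bound directly. So the missing idea is not a sharper combinatorial estimate on the sum of increments, but the $L^\infty$ distance bound that upgrades $r_\lambda$ from $1/\lambda$ to $\lambda^{-1/2}$.
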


The proof runs along the same lines of Theorem \ref{teo:existence_GMM} with however an improved bound for the radii in the proof of the density estimates, see \eqref{good_bound_radisu}  below.  We need to make a detailed proof since this will be used in the proof of Theorem \ref{teo:existence_GMM_half}.

\begin{proof}
Letting $\phi:=\phi_1+\phi_2,$  $H:=H_1-H_2,$ and 
$$
d_\psi^E(\cdot):=\dist_{\psi_1}(\cdot,\p E) +\dist_{\psi_2}(\cdot,\p E),
\qquad 
\tilde d_\psi^E(\cdot):=\sdist_{\psi_1}(\cdot,\p E) +\sdist_{\psi_2}(\cdot,\p E),
$$  
we have 
\begin{equation}\label{funktwoe}
\begin{aligned}
& \func(\cA,\cB,\lambda) -\int_{\R^n} H_2dx =  P_\phi(A_1)  + \int_{A_1} Hdx + \lambda \int_{A_1\Delta B_1} d_\psi^{B_1} dx \\
 =  & P_\phi(A_1) + \int_{A_1} Hdx + \lambda \int_{A_1} \tilde d_\psi^{B_1} dx - \lambda \int_{B_1} \tilde d_\psi^{B_1} dx=:\functwo(A_1,B_1,\lambda).
\end{aligned}
\end{equation}
Therefore, it suffices to show that for any bounded $G\in BV(\R^n;\{0,1\}),$ $GMM(\functwo,G)$ is nonempty and there exists $C_0:=C_0(n,\Phi,\Psi,H,P_\phi(G))$  such that for any $L\in GMM(\functwo,G)$
\begin{equation}\label{sokmang_otkanlar}
|L(t)\Delta L(s)| \le C_0\,|t-t'|^{1/2},\qquad t,t'>0,\qquad |t-t'|<1 
\end{equation}
and 
\begin{equation}\label{rasvoi_dajal}
L(t) \subseteq\, D:=\hull{G\cup B_R},\qquad t\ge0. 
\end{equation}

Note that, except for the presence of $\int_{A_1} Hdx,$ $\functwo$ is of the form of the Almgren-Taylor-Wang functional. 

We divide the proof into five steps. 
\smallskip 

{\it Step 1:  Existence of minimizers.} Let $E_0\in BV(\R^n;\{0,1\})$ be such that $E_0\subset D.$  Since $\phi$ is a norm, $\tilde d_\psi^{E_0}\ge 0$ in $\R^n\setminus E_0$ and $H\ge0$ in $\R^n\setminus D,$ as in the Euclidean two-phase case (see e.g. \cite{Ambrosio:1995}) we can use the Comparison Theorem with the convex set $D$ to establish the existence of a minimizer of $\functwo(\cdot,E_0,\lambda)$ and also that every minimizer $E_\lambda$ satisfies $E_\lambda \subseteq D.$ 
\smallskip

{\it Step 2: Unconstrained density estimates for minimizers.} Let $E_\lambda$ minimize  $\functwo(\cdot,E_0,\lambda)$ and $x_0\in E_\lambda\Delta E_0$ be such that $\dist(x_0,\p E_0)\ge  r_1$ for some $r_1>0$ satisfying 
\begin{equation}\label{r_1_def}
w_n^{1/n-1/p} r_1^{1-n/p}\Big(\int_D |H|^pdx\Big)^{1/p}< c_\Phi n\omega_n^{1/n}. 
\end{equation}
Notice that there are no restrictions on $r_1>0;$ in addition$x_0$ needs not be on $\p E_\lambda.$ Let us show that
\begin{itemize}
\item[(a)] if $x_0\in E_\lambda\setminus E_0,$ then 
$$
\frac{|E_\lambda\cap B_r(x_0)|}{|B_r(x_0)|} \ge \Big(\frac{c_\Phi}{4C_\Phi}\Big)^n \qquad \text{for any $r\in(0,r_1)$};
$$

\item[(b)] if $x_0\in E_0\setminus E_\lambda,$ then 
$$
\frac{|B_r(x_0)\setminus E_\lambda|}{|B_r(x_0)|} \ge \Big(\frac{c_\Phi}{4C_\Phi}\Big)^n \qquad \text{for any $r\in(0,r_1)$}.
$$
\end{itemize}

We prove only (a), since the proof of  (b) is similar. For shortness we write $B_r:=B_r(x_0).$ Fix any $r\in(0,r_1)$ such that 
\begin{equation}\label{good_choice_rad}
\cH^{n-1}(\p^* E_\lambda\cap \p B_r)=0. 
\end{equation}
%
By the minimality of $E_\lambda$ we have $\functwo(E_\lambda,E_0,\lambda)\le \functwo(E_\lambda\setminus B_r,E_0,\lambda)$ so that 
\begin{equation}\label{balabaji}
P_\phi(E_\lambda,B_s) + \lambda\int_{E_\lambda\cap B_r} \tilde d_\psi^{E_0} dx \le P_\phi(E_\lambda\setminus B_r,B_s) + \int_{E_\lambda\cap B_r} |H|dx  
\end{equation}
for any $s>r.$  The choice of $x_0$ and the definition of $\tilde d_\psi^{E_0}$ imply $\tilde d_\psi^{E_0}\ge0 $ in $B_{r_1}$ and hence using \eqref{good_choice_rad}, \eqref{set_operation121} (applied with $\phi_j=\phi$ and $A_j=E_\lambda$) and the inclusion $(E_\lambda\setminus B_r)\Delta E_\lambda\subset B_r$, from \eqref{balabaji} we get 
\begin{equation}\label{before_isop_ineq}
P_\phi(E_\lambda\cap B_r) \le 2\int_{E_\lambda\cap \p B_r} \phi(\nu_{B_r}) d\cH^{n-1} +\int_{E_\lambda\cap B_r} |H|dx. 
\end{equation}
The definition of $\phi,$ \eqref{Phi_assump}, the isoperimetric inequality and the H\"older inequality yield
\begin{equation}\label{firts_aserr}
\begin{aligned}
2c_\Phi n\omega_n^{1/n} |E_\lambda\cap B_r|^{\frac{n-1}{n}}\le & 4C_\Phi   \cH^{n-1}(E_\lambda\cap \p B_r)\\
&+ |E_\lambda\cap B_r|^{1-\frac{1}{p}} \Big(\int_{E_\lambda\cap B_r} |H|^pdx\Big)^{1/p}.
\end{aligned}
\end{equation}
Recall by Step 1 that $E_\lambda\subseteq D.$ Thus from the inequality 
$$
|E_\lambda\cap B_r|^{1-\frac{1}{p}}  \le |E_\lambda\cap B_r|^{\frac{n-1}{n}}\, |B_r|^{\frac{1}{n}-\frac{1}{p}} =\omega_n^{1/n-1/p} r^{1-n/p} |E_\lambda\cap B_r|^{\frac{n-1}{n}} 
$$ 
and \eqref{r_1_def}, 
$$
|E_\lambda\cap B_r|^{1-\frac{1}{p}} \Big(\int_{E_\lambda\cap B_r} |H|^pdx\Big)^{1/p} \le c_\Phi n\omega_n^{1/n} |E_\lambda\cap B_r|^{\frac{n-1}{n}},
$$
and therefore, from \eqref{firts_aserr} we deduce 
$$
c_\Phi n\omega_n^{1/n} |E_\lambda\cap B_r|^{\frac{n-1}{n}}\le 4C_\Phi  \cH^{n-1}(E_\lambda\cap \p B_r).
$$
Now integrating we get 
$$
\frac{|E_\lambda\cap B_r|}{|B_r|} \ge \Big(\frac{c_\Phi}{4C_\Phi}\Big)^n 
$$
for any $r\in(0,r_1).$
\smallskip

The next step is valid in the two-phase case. We miss the proof of a
similar statement in the multiphase case  because we are not able to prove 
the analogue of Step 2\footnote{In the multiphase case we miss the analogue
of \eqref{before_isop_ineq}, that was obtained neglecting 
the term $\int_{E_\lambda \cap B_r} \widetilde d_\psi^{E_0}~dx$ in 
\eqref{balabaji}. 
For instance, 
in the planar $4$-phase, at a triple junction
involving $\phi_1,\phi_2,\phi_3$ and surrounded by the fourth
phase having $\phi_4$ as surface tension,
it is conceivable that, if $\phi_1,\phi_2,\phi_3$ are 
quite large compared to $\phi_4$, 
then around the triple point, the fourth phase appears
after one minimization step.}.
 
We essentially follow the arguments of \cite{LS:1995,MSS:2016}. 
Let 
$$
C_1=C_1(n,\Phi,\Psi)=8C_\Psi\,\Big( \frac{(4C_\Phi)^{n+1} n}{2c_\Psi c_\Phi^n}\Big)^{1/2} 
$$ 
and 
$$
C_2=C_2(n,\Phi,\Psi,H,p):=(nc_\Phi)^{\frac{2p}{n-p}}\Big(\frac{C_1}{2C_\Psi}\Big)^2 \Big(\frac1{\omega_n}\int_D |H|^pdx\Big)^{ \frac{2}{p-n} }.
$$

{\it Step 3: $L^\infty$-bound for minimizers.} For any $\lambda>C_2,$ 
if $E_\lambda$ minimizes $\functwo(\cdot,E_0,\lambda)$ then
$$
\sup\limits_{x\in E_\lambda\Delta E_0} \,\,d_\psi^{E_0}(x) \le C_1\lambda^{-1/2}.
$$ 

Assume by contradiction that there exists $x_0\in E_\lambda\Delta E_0 $ such that $d_\psi^{E_0}(x_0)>C_1\lambda^{-1/2}.$  Then from \eqref{Psi_assump} we get $\dist(x_0,\p E_0)>\frac{C_1}{2C_\Psi}\,\lambda^{-1/2}.$ Since $\lambda>C_2,$ 
we can choose $\epsilon>0$ such that $r_1:=2r=\dist(x_0,\p E_0)>(\frac{C_1}{2C_\Psi} + \epsilon)\lambda^{-1/2}$ satisfies \eqref{r_1_def}, where for shortness we drop the dependence of $r$ on $n,$ $\lambda,$ $\epsilon,$ $\Phi$ and $\Psi$. Setting $B_{r}:=B_{r}(x_0)$ without loss of generality we also suppose that \eqref{good_choice_rad} holds. 
First we assume $x_0\in E_\lambda\setminus E_0.$ Then the minimality of $E_\lambda$ implies $\functwo(E_\lambda,E_0,\lambda) \le \functwo(E_\lambda\setminus B_{r},E_0,\lambda)$ so that, similarly to \eqref{before_isop_ineq}, 
\begin{equation}\label{izzat_nasib_etgin}
P_\phi(E_\lambda\cap B_{r}) + \lambda\int_{E_\lambda\cap B_{r}} \tilde d_\psi^{E_0} dx \le 2\int_{E_\lambda\cap\p B_{r}} \phi(\nu_{B_{r}})d\cH^{n-1} + \int_{E_\lambda\cap B_{r}} |H|dx. 
\end{equation}
By the H\"older inequality,  the inclusion $E_\lambda\subset D$ and \eqref{r_1_def},
\begin{equation}\label{bemajol}
\begin{aligned}
\int_{E_\lambda\cap B_{r}} |H|dx \le & |E_\lambda\cap B_{r}|^{1-1/p} \Big(\int_{E_\lambda\cap B_{r}} |H|^pdx\Big)^{1/p}\\
\le & \omega_n^{1/n-1/p}r^{1-n/p}\, \Big(\int_{D} |H|^pdx\Big)^{1/p}\, |E_\lambda\cap B_{r}|^{\frac{n-1}{n}}, \\
< & c_\Phi n\omega_n^{1/n} |E_\lambda\cap B_{r}|^{\frac{n-1}{n}}, 
\end{aligned} 
\end{equation}
therefore, by \eqref{Phi_assump} and the isoperimetric inequality, 
$$
P_\phi(E_\lambda\cap B_{r}) > \int_{E_\lambda\cap B_{r}} |H|dx.
$$
This and \eqref{izzat_nasib_etgin} imply 
\begin{equation}\label{ahli_jahondin_yaxshilik}
\lambda\int_{E_\lambda\cap B_{r}} d_\psi^{E_0} dx <  2\int_{E_\lambda\cap\p B_{r}} \phi(\nu_{B_{r}})d\cH^{n-1}. 
\end{equation}
By \eqref{Psi_assump}, the choice of $x_0$ and the definition of $r$  one has $d_\psi^{E_0}\ge c_\Psi\dist(\cdot,\p E_0)\ge 2c_\Psi r$ in $B_{r}.$ Thus,  from \eqref{ahli_jahondin_yaxshilik} and \eqref{Phi_assump}  we get 
\begin{equation*}
2c_\Psi \lambda r |E_\lambda\cap B_{r}| < 4C_\Phi\cH^{n-1}(E_\lambda\cap\p B_{r}). 
\end{equation*}
This, the inequality $\cH^{n-1}(E_\lambda\cap\p B_{r})\le n\omega_n r^{n-1}$ and Step 2 (a) imply 
\begin{equation*}
2c_\Psi \lambda \omega_n r^{n+1}\Big(\frac{c_\Phi}{4C_\Phi}\Big)^n < 4 C_\Phi n\omega_nr^{n-1}. 
\end{equation*}
Therefore, by the definition of $C_1$ and $r,$
$$
\Big( \frac{C_1}{8C_\Psi}\Big)^2 = \frac{(4C_\Phi)^{n+1} n}{2c_\Psi c_\Phi^n} > \lambda r^2  > \Big(\frac{C_1}{8C_\Psi} + \frac{\epsilon}{4}\Big)^2,
$$
a contradiction.  

If $x_0\in E_0\setminus E_\lambda,$ then we use $\functwo(E_\lambda,E_0,\lambda) \le \functwo(E_\lambda\cup B_{r},E_0,\lambda)$  and repeat the similar arguments above.
\smallskip 

Before passing to the next step let us define
$$
C_3:=C_3(n,\Phi,\Psi) = \frac{2nc_\Phi}{C_1+ \sqrt{C_1^2 + 4nc_\Phi C_\Psi}} ,
$$
$$
C_4:= C_4(n,\Phi)=\frac{n\omega_n (2^{1/n} - 1)}{2^{n+1/n}}\,\Big(\frac{c_\Phi}{4C_\Phi}\Big)^{n-1}
$$
and 
$$
C_5=C_5(n,\Phi,\Psi,H,p):=\max\Big\{C_2,C_3^2\Big(\frac{nc_\Phi}{2}\Big)^{\frac{2p}{n-p}}\Big(\frac1{\omega_n}\int_D |H|^pdx\Big)^{ \frac{2}{p-n} }\Big\}.
$$

{\it Step 4: Uniform density estimates for minimizers.} Given $\lambda>C_5$ and a minimizer $E_\lambda$ of $\functwo(\cdot,E_0,\lambda),$ following arguments of \cite{LS:1995,MSS:2016}  let us show  that 
\begin{equation}\label{volume_dens_two}
\Big(\frac{c_\Phi}{4C_\Phi}\Big)^n \le \frac{|E_\lambda\cap B_r(x)|}{|B_r(x)|} \le 1-  \Big(\frac{c_\Phi}{4C_\Phi}\Big)^n 
\end{equation}
and 
\begin{equation}\label{perimeter_dens_two}
C_4 \le \frac{P(E_\lambda, B_r(x))}{r^{n-1}} \le \frac{2C_\Phi + c_\Phi}{2c_\Phi} n\omega_n  
\end{equation}
for any $x\in \p E_\lambda$  and 
\begin{equation}\label{good_bound_radisu}
r\in (0,C_3\lambda^{-1/2}).
\end{equation}

Since $E_\lambda^{(1)}=E_\lambda$ and  $\overline{\p^*E_\lambda} =\p E_\lambda,$ we can suppose $x\in \p^*E_\lambda.$ 
For any  $r$ as in \eqref{good_bound_radisu} and $y\in B_r(x)$ one has 
$$
d_\psi^{E_0}(y)\le d_\psi^{E_\lambda}(y) + \sup\limits_{z\in E_\lambda\Delta E_0} d_\psi^{E_0}(z) \le 2C_\Psi r + \sup\limits_{z\in E_\lambda\Delta E_0} d_\psi^{E_0}(z) 
$$
so that by Step 3   
\begin{equation}\label{l_cheksiz_est_dpsi}
d_\psi^{E_0}(y) \le (2C_\Psi C_3 + C_1)\lambda^{-1/2}.  
\end{equation}

Let us prove the lower volume density estimates. For shortness set $B_r:=B_r(x).$ Let $r\in(0,C_3\lambda^{-1/2})$ be such that \eqref{good_choice_rad} holds. As in the proof of Step 2, from the inequality $\functwo(E_\lambda,E_0,\lambda) \le \functwo(E_\lambda\setminus B_r,E_0,\lambda)$  we get 
\begin{equation}\label{per_estimate}
P_\phi(E_\lambda, B_r) \le \int_{E_\lambda\cap\p B_r} \phi(\nu_{B_r})d\cH^{n-1} + \lambda\int_{E_\lambda\cap B_r} d_\psi^{E_0}(y)dy + \int_{E_\lambda\cap B_r} |H|dy. 
\end{equation}
By \eqref{l_cheksiz_est_dpsi}, the choice of  $r$ and the equality 
$$
(2C_\Psi C_3 + C_1)C_3 = \frac{nc_\Phi}{2}, 
$$
we have
\begin{align*}
\lambda \int_{E_\lambda\cap B_r} d_\psi^{E_0}(y)dy \le & (2C_\Psi C_3 + C_1) \omega_n^{1/n}\lambda^{1/2}r |E_\lambda\cap B_r|^{\frac{n-1}{n}}  \\
\le &  \omega_n^{1/n}(2C_\Psi C_3 + C_1)C_3 |E_\lambda\cap B_r|^{\frac{n-1}{n}} = \frac{c_\Phi n\omega_n^{1/n}}{2}\,|E_\lambda\cap B_r|^{\frac{n-1}{n}}.
\end{align*}
Furthermore, using $\lambda>C_5,$ as in \eqref{bemajol} 
\begin{align*}
\int_{E_\lambda\cap B_r} |H|dx \le &
 \omega_n^{1/n-1/p}r^{1-n/p}\, \Big(\int_{E_\lambda\cap B_{r_1}} |H|^pdx\Big)^{1/p}\, |E_\lambda\cap B_r|^{\frac{n-1}{n}} \\
\le &   \omega_n^{1/n-1/p}(C_3\lambda^{-1/2})^{1-n/p}\, \Big(\int_{E_\lambda\cap B_{r_1}} |H|^pdx\Big)^{1/p}\, |E_\lambda\cap B_r|^{\frac{n-1}{n}}\\
\le & \frac{c_\Phi n\omega_n^{1/n}}{2}\,|E_\lambda\cap B_r|^{\frac{n-1}{n}}.
\end{align*}
Therefore, from \eqref{per_estimate} it follows that 
\begin{equation}\label{upper_er_ineq}
P_\phi(E_\lambda, B_r) \le \int_{E_\lambda\cap\p B_r} \phi(\nu_{B_r})d\cH^{n-1} + c_\Phi n\omega_n^{1/n}\,|E_\lambda\cap B_r|^{\frac{n-1}{n}}. 
\end{equation}
Adding $\int_{E_\lambda\cap\p B_r} \phi(\nu_{B_r})d\cH^{n-1} $ to both sides of \eqref{upper_er_ineq}, and using \eqref{Phi_assump} and the isoperimetric inequality  we get 
$$
c_\Phi n\omega_n^{1/n}|E_\lambda\cap B_r|^{\frac{n-1}{n}} \le 4C_\Phi \cH^{n-1}(E_\lambda\cap\p B_r).
$$
Integrating this over $r$ we get the lower volume density estimate in \eqref{volume_dens_two}. 

To get the upper volume density estimate in \eqref{volume_dens_two} we use $\functwo(E_\lambda,E_0,\lambda) \le \functwo(E_\lambda\cup B_r,E_0,\lambda)$ and proceed as above. 

For what concerns the  upper perimeter density estimate in \eqref{perimeter_dens_two} we observe that from \eqref{upper_er_ineq} and \eqref{Phi_assump} it follows that 
\begin{equation*}
2c_\Phi P(E_\lambda, B_r) \le (2C_\Phi + c_\Phi) n\omega_n r^{n-1} 
\end{equation*}
for a.e. $r\in(0,C_3\lambda^{-1/2}).$  Since $r\mapsto P(E_\lambda,B_r)$ is non-decreasing and left-continuous, this inequality holds for all $r.$
Finally the lower perimeter density estimate follows from \eqref{volume_dens_two} and the relative isoperimetric inequality for the ball.
\smallskip

{\it Step 5: Existence of GMM starting from $G$.}  We follow the arguments of \cite{LS:1995,MSS:2016}. Let $\{G(\lambda,k)\}_{\lambda>C_5,k\in\N_0}$ be defined as follows: $G(\lambda,0)=G$ and 
$$
G(\lambda,k)\in\argmin \functwo(\cdot,G(\lambda,k-1),\lambda),\qquad k\ge1.
$$
By Step 1  $G(\lambda,k)$ is well-defined and 
\begin{equation}\label{topmas_pana}
G(\lambda,k)\subseteq D 
\end{equation}
 for all $\lambda>C_5$ and $k\ge0.$ Notice also that  
\begin{equation}\label{increasekkk}
k\in\N_0\mapsto P_\phi(G(\lambda,k)) + \int_{G(\lambda,k)} Hdx\quad \text{is non-increasing.} 
\end{equation}

Given $t>s > 0$ with $t-s<1,$ let $\lambda>\max\{C_5,\frac{5+C_3^2}{t-s},\frac{5}{s}\}$ so that $[\lambda t]-[\lambda s]\ge4,$ $[\lambda s]\ge5$ and $\frac{1}{\lambda|t-s|^{1/2}}< C_3\lambda^{-1/2}.$ By Proposition \ref{prop:funny_estimate} applied with $A=G(\lambda,k-1),$ $r_0=C_3\lambda^{-1/2},$ $\theta:=C_4,$ $\ell:=\frac{1}{\lambda|t-s|^{1/2}}$ and $B=G(\lambda,k),$ and using the bounds  \eqref{Phi_assump} and \eqref{Psi_assump} for anisotropies and mobilities,  
for any $k\in\{[\lambda s]+1,\ldots,[\lambda t]\}$ we get 
$$
\begin{aligned}
|G(\lambda,k-1)\Delta G(\lambda,k)| \le &
\frac{5^n\omega_n }{2C_4 c_\Phi\lambda|t-s|^{1/2}} \,P_\phi(G(\lambda,k-1))\\  
& + \frac{\lambda|t-s|^{1/2}}{2 c_\Psi}  \int_{G(\lambda,k-1)\Delta G(\lambda,k)} d_\psi^{G(\lambda,k-1)}  dx.
\end{aligned}
$$
Therefore,
\begin{equation}\label{bandaman}
\begin{aligned}
|G(\lambda,[\lambda s])\Delta G(\lambda,[\lambda t])|  \le  & \sum\limits_{k=[\lambda s]+1}^{[\lambda t]} |G(\lambda,k-1)\Delta G(\lambda,k)| \\
\le & \frac{5^n\omega_n  }{2C_4c_\Phi \lambda|t-s|^{1/2}} \sum\limits_{k=[\lambda s]+1}^{[\lambda t]} P_\phi(G(\lambda,k-1)) \\
& +   \frac{\lambda|t-s|^{1/2}}{2c_\Psi} \sum\limits_{k=[\lambda s]+1}^{[\lambda t]} \int_{G(\lambda,k-1)\Delta G(\lambda,k)} d_\psi^{G(\lambda,k-1)} dx.
\end{aligned}
\end{equation}
By \eqref{increasekkk},
\begin{align*}
\sum\limits_{k=[\lambda s]+1}^{[\lambda t]} P_\phi(G(\lambda,k-1)) \le &\sum\limits_{k=[\lambda s]+1}^{[\lambda t]} \Big(P_\phi(G(\lambda,k-1)) + \int_{G(\lambda,k-1)} Hdx + \int_{G(\lambda,k-1)} |H|dx\Big)\\
\le &\Big(P_\phi(G) + \int_G Hdx + \int_D|H|dx\Big) \Big([\lambda t] -[\lambda s]\Big)\\
\le & \Big(P_\phi(G) + 2\int_D|H|dx\Big)\, \Big(\lambda (t-s)+1\Big)
\end{align*}
and 
\begin{align*}
&  \lambda \sum\limits_{k=[\lambda s]+1}^{[\lambda t]}  \int_{G(\lambda,k-1)\Delta G(\lambda,k)} d_\psi^{G(\lambda,k-1)} dx \\
\le  &P_\phi(G(\lambda,[\lambda s])) + \int_{G(\lambda,[\lambda s])} Hdx - P_\phi(G(\lambda,[\lambda t])) - \int_{G(\lambda,[\lambda t])} Hdx \\
\le & P_\phi(G) +2\int_D |H|dx,
\end{align*}
therefore, from \eqref{bandaman} we get 
\begin{equation}\label{avvalgilarga_oxshamas}
|G(\lambda,[\lambda s])\Delta G(\lambda,[\lambda t])|  \le  \Big(C_6\,|t-s|^{1/2} + \frac{C_6-\frac{1}{2c_\Psi}}{\lambda|t-s|^{1/2}}\Big)\,\Big(P_\phi(G) + 2\int_D|H|dx\Big),
\end{equation}
where 
\begin{equation}\label{defionition_c6}
C_6:=  \frac{5^n\omega_n}{2C_4c_\Phi}+\frac{1}{2c_\Psi}. 
\end{equation}
Now \eqref{sokmang_otkanlar} and \eqref{rasvoi_dajal} follow from \eqref{avvalgilarga_oxshamas} and \eqref{topmas_pana}, respectively.
\end{proof}

We will use \eqref{perimeter_dens_two}, \eqref{good_bound_radisu} and \eqref{avvalgilarga_oxshamas} in the proof of Theorem \ref{teo:existence_GMM_half}.
 
\section{Improved time H\"older regularity}\label{sec:improve_holder}

In this section we show that when $\phi_i=\phi$ and $\psi_i=\psi$  for any $i=1,\ldots,N+1,$  the time H\"older continuity exponent of GMM for partitions can be improved to $1/2$.
The result follows from the generalization of~\cite{LS:1995} in the previous section (Theorem~\ref{teo:existence_GMM2f}) combined with   with a comparison (Theorem~\ref{teo:comparison} below) between a multiphase flow and   a two-phase flow starting from just one of the phases and its complement. Arguments from our main continuity   result (in Theorem~\ref{teo:existence_GMM}) are needed to reconnect both flows in the limit. 

\begin{theorem}\label{teo:existence_GMM_half}
Let $\Phi=\{\phi,\ldots,\phi\}$ and $\Psi=\{\psi,\ldots,\psi\}$ for some norms $\phi$ and $\psi$ on $\R^n,$ and ${\bf H}\equiv0.$ Then  for any $\cG\in \P_b(N+1)$ and $\cM\in GMM(\func,\cG)$ 
\begin{equation}\label{hulder_est3} 
|\cM(t)\Delta \cM(t')|\le  C_6\,\Per_\Phi(\cG)\,|t-t'|^{1/2},\qquad t,t'>0,\,\,|t-t'|<1,
\end{equation}
where $C_6$ is given in \eqref{defionition_c6}. 
In addition, if $\sum\limits_{j=1}^{N+1} |\overline{G_j}\setminus G_j| = 0,$ then \eqref{hulder_est3} holds for any $t,t'\ge0$ with $|t-t'|<1.$
\end{theorem}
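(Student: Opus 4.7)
The plan is to prove the improved estimate at the discrete level, by combining the monotonicity of the scheme from Theorem~\ref{teo:existence_GMM} with the fact that, in the fully symmetric setting, the functional $\func$ decouples into two-phase functionals of the form studied in Theorem~\ref{teo:existence_GMM2f}. The comparison principle (Theorem~\ref{teo:comparison}) transfers the $\lambda^{-1/2}$-scale density estimates of the two-phase theory to each phase $G_i(\lambda,k)$ of the multiphase minimizer, so that one can essentially repeat the telescoping argument~\eqref{bandaman}--\eqref{avvalgilarga_oxshamas}.

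\textbf{Step 1 (Discrete scheme).} For $\lambda\ge1$ and $k\in\N_0$, define the iteration $\cG(\lambda,k)$ by $\cG(\lambda,0)=\cG$ and $\cG(\lambda,k)\in\argmin\func(\cdot,\cG(\lambda,k-1),\lambda)$, whose existence is given by Step~1 of Theorem~\ref{teo:existence_GMM}. Since ${\bf H}\equiv 0$, the monotonicity relation \eqref{dar_monoton} reads
\begin{equation*}
\Per_\Phi(\cG(\lambda,k)) + \lambda\,\sigma_\Psi(\cG(\lambda,k),\cG(\lambda,k-1)) \le \Per_\Phi(\cG(\lambda,k-1)),
\end{equation*}
so $k\mapsto\Per_\Phi(\cG(\lambda,k))$ is non-increasing and bounded above by $\Per_\Phi(\cG)$.

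\textbf{Step 2 (Uniform density estimates at scale $\lambda^{-1/2}$).} Because $\phi_i=\phi$ and $\psi_i=\psi$ for every $i$, one has $\func(\cA,\cB,\lambda)=\sum_{i=1}^{N+1}\bar{\mathfrak{F}}(A_i,B_i,\lambda)$, where $\bar{\mathfrak{F}}(A,B,\lambda):=P_\phi(A)+\lambda\int_{A\Delta B}\dist_\psi(\cdot,\partial B)\,dx$ is, up to the ambient two-phase setting, the functional of Theorem~\ref{teo:existence_GMM2f}. Applying Theorem~\ref{teo:comparison} to each index $i$, $G_i(\lambda,k)$ is sandwiched between two-phase minimizers built from $G_i(\lambda,k-1)$ and from its complement. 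The $L^\infty$-bound of Step~3 in the proof of Theorem~\ref{teo:existence_GMM2f} then transfers to $G_i(\lambda,k)\Delta G_i(\lambda,k-1)$, and rerunning the cutting/filling argument of Step~4 of that proof yields, for all $\lambda$ sufficiently large, every $i$, every $x\in\partial G_i(\lambda,k)$ and every $r\in(0,C_3\lambda^{-1/2})$,
\begin{equation*}
C_4 \le \frac{P(G_i(\lambda,k),B_r(x))}{r^{n-1}} \le \frac{2C_\Phi+c_\Phi}{2c_\Phi}\,n\omega_n.
\end{equation*}

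\textbf{Step 3 (Discrete $1/2$-Hölder bound).} For $t>s>0$ with $t-s<1$ and $\lambda$ large enough that $[\lambda t]-[\lambda s]\ge4$ and $1/(\lambda|t-s|^{1/2})<C_3\lambda^{-1/2}$, apply Proposition~\ref{prop:funny_estimate} to each pair $(G_i(\lambda,k-1),G_i(\lambda,k))$ with $r_0=C_3\lambda^{-1/2}$, $\theta=C_4$ and $\ell=1/(\lambda|t-s|^{1/2})$; sum over $i\in\{1,\ldots,N+1\}$ and $k\in\{[\lambda s]+1,\ldots,[\lambda t]\}$ and telescope the mobility contributions by Step~1. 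The computation is identical to \eqref{bandaman}--\eqref{avvalgilarga_oxshamas} and produces
\begin{equation*}
|\cG(\lambda,[\lambda s])\Delta \cG(\lambda,[\lambda t])| \le \left(C_6|t-s|^{1/2} + \frac{C_6-(2c_\Psi)^{-1}}{\lambda|t-s|^{1/2}}\right)\Per_\Phi(\cG).
\end{equation*}

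\textbf{Step 4 (Passage to the limit).} Given $\cM\in GMM(\func,\cG)$, extract from its defining sequence a subsequence $\lambda_h\to+\infty$ such that $\cG(\lambda_h,[\lambda_h t])\to\cM(t)$ in $L^1(\R^n)$ for every $t\ge0$. Taking $\liminf_{h\to\infty}$ in the discrete bound of Step~3 yields \eqref{hulder_est3} for $t,s>0$. Under the additional hypothesis $\sum_j|\overline{G_j}\setminus G_j|=0$, the extension to $s=0$ is obtained exactly as in the continuity-at-the-origin argument collected at the end of the proof of Theorem~\ref{teo:existence_GMM}.

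\textbf{Main obstacle.} The genuinely delicate point is Step~2: the comparison (Theorem~\ref{teo:comparison}) controls only how much a phase can \emph{shrink} in one minimisation step, because it gives $L_i(\lambda,k)\subseteq G_i(\lambda,k)$ for the two-phase minimizer $L_i(\lambda,k)$ starting from $G_i(\lambda,k-1)$. To control how much $G_i$ may \emph{grow}, one exploits that any gain of $G_i$ must be a loss of a neighbouring $G_j$, so that the comparison applied to $j$ provides the reverse inclusion. Combining these two one-sided estimates into a single $C\lambda^{-1/2}$-neighbourhood bound on $\partial G_i(\lambda,k)$ versus $\partial G_i(\lambda,k-1)$, and then checking that the cutting/filling density estimates of Step~4 of Theorem~\ref{teo:existence_GMM2f} remain valid within the partition framework, is the technical core of the argument.
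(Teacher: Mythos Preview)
Your Step~2 is where the argument breaks down, and the obstacle you flag at the end is not a technicality but the heart of the matter. The comparison of Theorem~\ref{teo:comparison} is genuinely one-sided: for each $j$ it gives a two-phase minimizer $L_j$ of $\functwo(\cdot,G_j(\lambda,k-1),\lambda)$ with $L_j\subseteq G_j(\lambda,k)$, but it does \emph{not} give a two-phase minimizer containing $G_i(\lambda,k)$. Your ``sandwich'' therefore fails. The best outer bound you get is $G_i(\lambda,k)\subseteq\bigcap_{j\neq i}(L_j)^c$, which is not a two-phase minimizer. In particular, if $x\in G_i(\lambda,k)\setminus G_i(\lambda,k-1)$ lies in $G_j(\lambda,k-1)$, the two-phase $L^\infty$ bound yields $\dist_\psi(x,\partial G_j(\lambda,k-1))\le C_1\lambda^{-1/2}$, but the nearest point of $\partial G_j(\lambda,k-1)$ may lie on an interface $\partial^*G_j\cap\partial^*G_m$ with $m\neq i$, giving no control whatsoever on $\dist_\psi(x,\partial G_i(\lambda,k-1))$. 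Without that bound, rerunning the cutting/filling of Step~4 of Theorem~\ref{teo:existence_GMM2f} inside the partition fails: the competitor that moves $G_i(\lambda,k)\cap B_r$ to a neighbouring phase $j$ produces a distance term $\lambda\int_{G_i(\lambda,k)\cap B_r}\big(\sdist_\psi(\cdot,\partial G_j(\lambda,k-1))-\sdist_\psi(\cdot,\partial G_i(\lambda,k-1))\big)\,dx$, which can be of order $\lambda$ rather than $\lambda^{1/2}$. So the claimed $\lambda^{-1/2}$-scale density estimates for $G_i(\lambda,k)$ are not established; the paper explicitly notes (footnote to Step~2 of Theorem~\ref{teo:existence_GMM2f}) that this direct route does not extend to the multiphase case.

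The paper avoids this entirely by never proving $\lambda^{-1/2}$-density for the multiphase minimizers. Instead, for each starting step $k$ it launches auxiliary two-phase flows $F_i^k(\lambda,l)$ with $F_i^k(\lambda,k)=G_i(\lambda,k)$, for which the $\lambda^{-1/2}$-density holds by Theorem~\ref{teo:existence_GMM2f}. The one-sided comparison $F_i^k(\lambda,l)\subseteq G_i(\lambda,l)$ (Lemma~\ref{lem:minimal_minimizer}) and partition bookkeeping yield the key inequality
\[
|\cG(\lambda,k)\Delta\cG(\lambda,l)|\le\sum_{i=1}^{N+1}\big|F_i^k(\lambda,k)\setminus F_i^k(\lambda,l)\big|,
\]
so the $1/2$-H\"older bound on the two-phase flows suffices. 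A further subtlety: the initial datum $F_i^{[\lambda s]}(\lambda,[\lambda s])=G_i(\lambda,[\lambda s])$ only enjoys the $\lambda^{-1}$-density of Theorem~\ref{teo:existence_GMM}, so the paper inserts an intermediate time $s'\in(s,t)$, controls $[\,[\lambda s],[\lambda s']\,]$ with the $\tfrac{1}{n+1}$-estimate and $[\,[\lambda s'],[\lambda t]\,]$ with the $1/2$-estimate, and lets $s'\searrow s$ after the limit $\lambda\to\infty$.
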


Recall that, by Theorem \ref{teo:existence_GMM}, for any $\cG\in \P_b(N+1),$ $ GMM(\func,\cG)$ is non-empty, each $\cM\in GMM(\func,\cG)$ is locally $1/(n+1)$-H\"older continuous and 
\begin{equation*}
\bigcup\limits_{i=1}^N M_i(t) \subseteq \, \hull{\cG}\qquad \text{for any $t\ge0$}.
\end{equation*}

Besides $\func$ we need to consider also the functional $\functwo$ defined (up to constants) in \eqref{funktwoe} with $H=0,$ i.e., 
\begin{equation}\label{functo2}
\functwo(G,E,\lambda):= P_\phi(G) + \int_{G\Delta E} \dist_\psi(x,\p E)dx. 
\end{equation}

We start with a comparison result: this is the key point of the proof of Theorem \ref{teo:existence_GMM_half} since it allows to compare the evolution of a single phase with the   multiphase case.

\begin{theorem}[\textbf{Discrete comparison multiphase-phase}]\label{teo:comparison}
Let $g_1,\ldots,g_{N+1}\in L_{\loc}^1(\R^n)$  and suppose that $\cA\in\P_b(N+1)$  minimize 
$$
\cE\in \P_b(N+1)\mapsto \sum_{i=1}^{N+1} P_\phi(E_i) + \sum_{i=1}^N \int_{E_i} g_i dx - \int_{E_{N+1}^c} g_{N+1}dx.
$$
Suppose that for $i\in\{1,\dots,N\}$ and $g_i'\in L^1_\loc(\R^n)$, there exists a bounded minimizer $F_i$ of
$$
F\in BV(\R^n;\{0,1\}) \mapsto  P_\phi(F) + \int_F g'_idx, 
$$
and suppose that, given $g_{N+1}'\in L_\loc^1(\R^n),$ there exists a bounded minimizer of
$$
G\in BV(\R^n;\{0,1\}) \mapsto  P_\phi(G) - \int_G g'_{N+1}dx, 
$$
the complement of which we denote by $F_{N+1}.$ If $2g'_i - g_i +  g_j >0$ a.e. in $\R^n$ for all $i,j\in \{1,\ldots,N+1\},$ $i\ne j,$ then  
$$
F_i\subseteq  A_i,\qquad i\in\{1,\ldots,N+1\}.
$$
\end{theorem}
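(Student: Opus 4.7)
My plan is to combine a careful min-inequality between $\cA$ and a tailored competitor $\cA'$ with the min-inequality between $F_i$ and $F_i\cap A_i$ (resp.\ between $F_{N+1}^c$ and $F_{N+1}^c\cup A_{N+1}^c$ for $i=N+1$), and close the argument via the strict sign assumption on $2g'_i-g_i+g_j$. Fix first $i\in\{1,\ldots,N\}$, and define $\cA'\in\P_b(N+1)$ by $A'_i:=A_i\cup F_i$ and $A'_j:=A_j\setminus F_i$ for $j\neq i$; boundedness of $F_i$ ensures $\cA'\in\P_b(N+1)$, and a short check confirms this is a partition. Rewriting the target functional as $\sum_{k=1}^{N+1}P_\phi(E_k)+\sum_{k=1}^N\int_{E_k}(g_k-g_{N+1})\,dx$ (using $E_{N+1}^c=\bigsqcup_{k\le N}E_k$) and exploiting the minimality of $\cA$ against $\cA'$ yields
\begin{equation*}
0\le \sum_k\bigl[P_\phi(A'_k)-P_\phi(A_k)\bigr]+\int_{F_i\setminus A_i}(g_i-g_{N+1})\,dx-\sum_{\substack{j\neq i \\ j\le N}}\int_{A_j\cap F_i}(g_j-g_{N+1})\,dx.
\end{equation*}

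The key technical estimate is a bound on the perimeter increment coming from the equality of all anisotropies. Using $\phi(-\nu)=\phi(\nu)$, the partition perimeter factors as $\sum_k P_\phi(A_k)=2P_\phi(A_i)+2\mu_i^\perp(\cA)$, with $\mu_i^\perp(\cA):=\sum_{k<l,\,k,l\neq i}P_\phi(\partial^*A_k\cap\partial^*A_l)$ collecting interfaces not meeting $A_i$. A direct application of \eqref{ess_differense} shows that $\partial^*A'_j\cap\partial^*A'_l\subseteq F_i^{(0)}\cap\partial^*A_j\cap\partial^*A_l$ up to $\cH^{n-1}$-null sets for $j,l\neq i$, so $\mu_i^\perp(\cA')\le\mu_i^\perp(\cA)$ and therefore
\begin{equation*}
\sum_k\bigl[P_\phi(A'_k)-P_\phi(A_k)\bigr]\le 2\bigl[P_\phi(A_i\cup F_i)-P_\phi(A_i)\bigr]\le 2\bigl[P_\phi(F_i)-P_\phi(F_i\cap A_i)\bigr],
\end{equation*}
the last step being submodularity \eqref{famfor}. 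Testing the minimality of $F_i$ against $F_i\cap A_i$ then gives $P_\phi(F_i)-P_\phi(F_i\cap A_i)\le -\int_{F_i\setminus A_i}g'_i\,dx$.

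Inserting these bounds into the previous displayed inequality and splitting $F_i\setminus A_i=\bigsqcup_{j\neq i}(A_j\cap F_i)$ over the other phases, a direct algebraic cancellation collapses everything to
\begin{equation*}
0\le -\sum_{j\neq i}\int_{A_j\cap F_i}\bigl(2g'_i-g_i+g_j\bigr)\,dx,
\end{equation*}
and the strict pointwise inequality $2g'_i-g_i+g_j>0$ forces $|A_j\cap F_i|=0$ for every $j\neq i$, i.e., $F_i\subseteq A_i$. The case $i=N+1$ proceeds identically after letting $H:=F_{N+1}^c$ (bounded by hypothesis) and taking the competitor $A'_{N+1}:=A_{N+1}\cup F_{N+1}$, $A'_j:=A_j\setminus F_{N+1}$ for $j\le N$; testing the minimality of $H$ against $H\cup A_{N+1}^c$ produces the parallel estimate $P_\phi(F_{N+1})-P_\phi(F_{N+1}\cap A_{N+1})\le -\int_{F_{N+1}\setminus A_{N+1}}g'_{N+1}\,dx$ after passing through complements (using $P_\phi(X)=P_\phi(X^c)$ for norms). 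The hard part is the multiphase perimeter inequality via $\mu_i^\perp$: tracking which interfaces of $\cA$ survive (and which are lost) as $F_i$ is absorbed into $A_i$ requires the reduced-boundary computations \eqref{ess_intersection}--\eqref{ess_union} and the convention $A_k=A_k^{(1)}$, while all remaining steps reduce to bookkeeping and the pointwise sign hypothesis.
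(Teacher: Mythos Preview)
Your argument is correct and follows the same high-level scheme as the paper: compare $\cA$ with the competitor obtained by absorbing $F_i$ into the $i$-th phase, combine with the minimality of $F_i$ against $F_i\cap A_i$, use submodularity \eqref{famfor}, and conclude from the strict sign hypothesis. The algebra on the forcing terms and the $i=N+1$ case (passing through complements) match exactly.

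The genuine difference lies in the perimeter step. The paper isolates and proves the stand-alone inequality \eqref{eq:compij},
\[
\sum_{j\neq i}P_\phi(G_j\setminus E)+P_\phi(G_i\cap E)\le P_\phi(E)+\sum_{j\neq i}P_\phi(G_j),
\]
first under the transversality assumption $\cH^{n-1}\big(\p^*E\cap\bigcup_j\p^*G_j\big)=0$ and then in general via a translation-and-lower-semicontinuity argument. You instead exploit the factorisation $\sum_k P_\phi(A_k)=2P_\phi(A_i)+2\mu_i^\perp(\cA)$ (valid precisely because all anisotropies coincide and $\phi$ is even) and show $\mu_i^\perp(\cA')\le\mu_i^\perp(\cA)$ by checking that $\p^*A'_j\cap\p^*A'_l\subseteq F_i^{(0)}\cap\p^*A_j\cap\p^*A_l$ up to $\cH^{n-1}$-null sets. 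Your route is a bit more direct: the cross-terms in \eqref{ess_differense} involving $\{\nu_{A_j}=-\nu_{F_i}\}$ automatically drop out of the pairwise intersection, so no transversality assumption and no translation trick are needed. On the other hand, the paper's \eqref{eq:compij} is packaged as a reusable inequality for an arbitrary set $E$ against an arbitrary partition $\cG$. Both approaches lead to the identical final inequality $0\le -\sum_{j\neq i}\int_{A_j\cap F_i}(2g'_i-g_i+g_j)\,dx$.
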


\begin{proof}
Let $i\in\{1,\ldots,N\}.$ By minimality,
\begin{equation}\label{eq:minA}
\begin{aligned}
&\sum_{j=1}^{N+1} P_\phi(A_j) + \sum_{j=1}^N \int_{A_j} g_j dx - \int_{A_{N+1}^c} g_{N+1} dx
\le   P_\phi(A_i\cup F_i) \\
&+ \sum_{j=1, j\neq i}^{N+1} P_\phi(A_j\setminus F_i)  
+ \int_{A_i\cup F_i} g_i dx + \sum_{j=1,\,j\neq i}^N \int_{A_j\setminus F_i} g_j dx - \int_{A_{N+1}^c\cup F_i} g_{N+1} dx
\end{aligned}
\end{equation}
and
\begin{equation}\label{eq:minF}
P_\phi(F_i) + \int_{F_i} g'_idx \le P_\phi(F_i\cap A_i) +  \int_{F_i\cap A_i} g'_i dx.
\end{equation}
Summing \eqref{eq:minA} and twice \eqref{eq:minF}, we obtain
\begin{equation}\label{eq:mainineq}
\begin{aligned}
& P_\phi(F_i) +P_\phi(A_i) + P_\phi(F_i)+ \sum_{j=1,j\neq i}^{N+1 } P_\phi(A_j) \\
& +\int_{F_i\setminus A_i} (g'_i-g_i) dx + \sum_{j=1,j\neq i}^{N+1} \int_{A_j\cap F_i} g_j dx
+ \int_{F_i\setminus A_i} g'_i dx \\ 
\le & P_\phi(F_i\cup A_i) + P_\phi(F_i\cap A_i) + \sum_{j=1,j\neq i}^{N+1}  P_\phi(A_j\setminus F_i)  
+P_\phi(F_i\cap A_i).
\end{aligned}
\end{equation}
Let us show that for any $E\in BV(\R^n;\{0,1\}),$  $\cG\in \P_b(N+1)$ and $i\in\{1,\ldots,N+1\},$  
\begin{equation}\label{eq:compij}
\sum_{j=1,j\neq i}^{N+1} P_\phi(G_j\setminus E)   + P_\phi(G_i\cap E) 
\le P_\phi(E)+ \sum_{j=1,j\neq i}^{N+1} P_\phi(G_j).
\end{equation}
First assume that $\cH^{n-1}(\p^*E \cap \bigcup_{j=1}^{N+1} \p^*G_j)=0.$  In this case by \eqref{ess_intersection} and \eqref{ess_differense},  as well as the inclusion $\partial^* G_i \subset\bigcup_{j=1,j\neq i}^{N+1} \partial^* G_j,$ we obtain
$$
P_\phi(G_j\setminus E) = \int_{E^{(0)} \cap \p^*G_j} \phi(\nu_{G_j})d\cH^{n-1} +  \int_{G_j \cap \p^*E} \phi(\nu_E)d\cH^{n-1} 
$$
and 
\begin{align*}
P_\phi(G_i \cap E) = & \int_{E\cap \p^*G_i} \phi(\nu_{G_i})d\cH^{n-1} +  \int_{G_i \cap \p^*E} \phi(\nu_E)d\cH^{n-1} \\
= & \sum\limits_{j=1,j\ne i}^{N+1} \int_{E\cap \p^*G_j\cap \p^*G_i} \phi(\nu_{G_j})d\cH^{n-1} +  \int_{G_i \cap \p^*E} \phi(\nu_E)d\cH^{n-1},  
\end{align*}
and hence, 
\begin{align*}
&\sum_{j=1,j\neq i}^{N+1} P_\phi(G_j\setminus E)   + P_\phi(G_i\cap E)\\
= & \sum\limits_{j=1,j\ne i}^{N+1}  \Big(\int_{E^{(0)} \cap \p^*G_j} \phi(\nu_{G_j})d\cH^{n-1} + \int_{E\cap \p^*G_j\cap \p^*G_i} \phi(\nu_{G_j})d\cH^{n-1} \Big)\\
& +  \sum\limits_{j=1}^{N+1}  \int_{G_j \cap \p^*E} \phi(\nu_E)d\cH^{n-1} \le  \sum\limits_{j=1,j\ne i}^{N+1} P_\phi(G_j) + P_\phi(E). 
\end{align*}
In the general case we choose a sequence $\{\xi_k\}\subset\R^n$ such that $|\xi_k|\to0$ and $\cH^{n-1}(\p^*(E +\xi_k) \cap \bigcup_{j=1}^{N+1} \p^*G_j)=0$, where 
$
E +\xi_k:= \{x\in\R^n:\,\,x-\xi_k\in E\}.
$ 
By the previous case,
\begin{equation}\label{choram_asbobim}
 \sum_{j=1,\neq i}^{N+1} P_\phi(G_j\setminus (E+\xi_k))   + P_\phi(G_i\cap (E+\xi_k))\le  \sum\limits_{j=1,j\ne i}^{N+1} P_\phi(G_j) + P_\phi(E+\xi_k). 
\end{equation}
Since $P_\phi(E+\xi_k) = P_\phi(E)$ and $\lim\limits_{k\to+\infty} |(E+\xi_k)\Delta E|\to0,$  letting $k\to +\infty$ in \eqref{choram_asbobim} and using the $L^1(\R^n)$-lower semicontinuity of the $\phi$-perimeter we get \eqref{eq:compij}.

Inserting \eqref{eq:compij} with $\cG=\cA$ and $E=F_i$  in \eqref{eq:mainineq} and using \eqref{famfor} we get  
$$
\int_{F_i\setminus A_i} (g'_i-g_i) dx + \sum_{j=1,j\neq i}^{N+1} \int_{A_j\cap F_i} g_j dx
+ \int_{F_i\setminus A_i} g'_i dx \le 0.
$$
Recall that  $F_i\setminus A_i  = \bigcup_{j=1,j\neq i}^{N+1} F_i\cap A_j$ up to a negligible set, thus, 
$$
\sum\limits_{j=1,j\ne i}^{N+1} \int_{A_j\cap F_i} (2g'_i - g_i + g_j) dx \le 0.
$$
By assumption  $2g'_i - g_i +  g_j >0$ a.e., and hence  $F_i\subseteq  A_i$  up to a negligible set. 
The case $i=N+1$ is similar.
\end{proof}

\begin{lemma}\label{lem:sdist_property}
Let $\cG\in \P_b(N+1)$  and set 
$$
g_j(\cdot):= \sdist_\psi(\cdot,\p G_j),\qquad j\in\{1,\ldots,N+1\}.
$$
For $E\subseteq \R^n$ define  $e(\cdot) = \sdist_{\psi}(\cdot,\p E).$ If either $E\subseteq G_i$ for some $i\in\{1,\ldots,N\}$ or $G_{N+1}^c\subseteq E^c,$   then $2e -g_i+g_j\ge 0$ a.e. in $\R^n$ for any $j\in\{1,\ldots,N+1\},$ $j\ne i.$ Similarly, if either $E\strictlyincluded G_i$ for some $i\in\{1,\ldots,N\}$ or $G_{N+1}^c\strictlyincluded E^c,$ then  $2e -g_i+g_j\ge 0$ a.e. in $\R^n$ for any $j\in\{1,\ldots,N+1\},$ $j\ne i.$ 
\end{lemma}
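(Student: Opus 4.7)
Proof proposal: The two hypotheses can be unified: $G_{N+1}^c\subseteq E^c$ is the same as $E\subseteq G_{N+1}$, so in both parts one may assume $E\subseteq G_i$ for some $i\in\{1,\dots,N+1\}$ and establish $2e-g_i+g_j\ge 0$ a.e.\ for each $j\ne i$. Since the measure-theoretic boundaries $\p E$ and $\p G_k$ are Lebesgue-null, I would argue pointwise on the full-measure set where $x$ lies in exactly one cell of the partition.

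The geometric engine is one elementary fact: because $\psi$ is a norm, for three collinear points $x,z,y$ with $z$ on the segment $[x,y]$ one has $\psi(x-y)=\psi(x-z)+\psi(z-y)$. From this I would derive two dual inequalities. First, if $x\in E$ and $y\in\p G_k$ realizes $\dist_\psi(x,\p G_k)$, then $y\in E^c$ (since $\p G_k\cap G_i^{(1)}=\emptyset$ and $E\subseteq G_i^{(1)}$), so the segment $[x,y]$ crosses $\p E$ at some $z$, yielding
\begin{equation*}
\dist_\psi(x,\p G_k)\ge \dist_\psi(x,\p E) \qquad \text{for every }k,\ \text{whenever }x\in E.
\end{equation*}
Dually, if $x\in G_k$ for some $k\ne i$ and $y\in\p E\subseteq\overline E\subseteq\overline{G_i}$ realizes $\dist_\psi(x,\p E)$, then $y\notin G_k$, so the segment $[x,y]$ crosses $\p G_k$, giving
\begin{equation*}
\dist_\psi(x,\p E)\ge \dist_\psi(x,\p G_k) \qquad \text{whenever }x\in G_k,\ k\ne i.
\end{equation*}

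With these in hand I would split $\R^n$ into four cases for a fixed $j\ne i$. \emph{Case~1:} $x\in E\subseteq G_i$; the first inequality applied with $k=i$ and $k=j$ yields $\dist_\psi(x,\p G_i)+\dist_\psi(x,\p G_j)\ge 2\dist_\psi(x,\p E)$, which is precisely $-g_i(x)+g_j(x)\ge -2e(x)$. \emph{Case~2:} $x\in G_i\setminus E$; then $e(x)\ge 0$, $-g_i(x)\ge 0$, $g_j(x)\ge 0$ and there is nothing to prove. \emph{Case~3:} $x\in G_j$; the second inequality with $k=i$ and $k=j$ gives $e(x)\ge g_i(x)$ and $e(x)\ge -g_j(x)$, whose sum is the claim. \emph{Case~4:} $x\in G_k$ with $k\notin\{i,j\}$; here $e,g_i,g_j\ge 0$, and $e\ge g_i$ by the second inequality applied to this $k$, so $2e-g_i+g_j\ge e+g_j\ge 0$.

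The second statement, under compact inclusion, follows from exactly the same case analysis; in fact under $E\strictlyincluded G_i$ one can sharpen the first inequality by the positive quantity $\dist_\psi(\overline E,\p G_i)$, which would promote the conclusion to the strict a.e.\ inequality actually invoked in Theorem~\ref{teo:comparison}. The only real care needed will be the standard housekeeping from the convention $G_k=G_k^{(1)}$: identifying $\dist_\psi(x,\overline{G_k})$ with $\dist_\psi(x,\p G_k)$ for $x\notin\overline{G_k}$, and using that $\p G_k\cap G_l^{(1)}=\emptyset$ for every pair of cells, both of which are automatic from the density definitions.
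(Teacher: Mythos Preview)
Your argument is correct in substance, but it is a four-case unpacking of what the paper dispatches in one line. The paper's proof simply invokes the monotonicity of the signed distance under inclusion,
\[
A\subseteq B \quad\Longrightarrow\quad \sdist_\psi(\cdot,\p A)\ge \sdist_\psi(\cdot,\p B)\ \text{a.e.},
\]
applied twice: $E\subseteq G_i$ gives $e\ge g_i$, and $E\subseteq G_j^c$ (which follows from $E\subseteq G_i$ and $G_i\cap G_j=\emptyset$) gives $e\ge \sdist_\psi(\cdot,\p G_j^c)=-g_j$. Adding yields $2e-g_i+g_j\ge 0$. Your segment-crossing arguments in Cases~1--4 are exactly a pointwise re-derivation of these two monotonicity facts, split according to the sign pattern of $e,g_i,g_j$. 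What the paper's route buys is brevity and a clean handling of the measure-theoretic issues you flag at the end: the ``a.e.'' in the monotonicity statement absorbs the boundary subtleties without further work.

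Two small remarks on your write-up. In Case~3 you say ``the second inequality with $k=i$ and $k=j$'', but your second inequality is formulated for $x\in G_k$, so only $k=j$ is available there; the bound $e(x)\ge g_i(x)$ actually comes from $E\subseteq G_i$ and $x\notin\overline{G_i}$, i.e.\ the trivial fact that the distance to the smaller set is larger. Second, you are right that the compact-inclusion clause should yield a \emph{strict} inequality (the paper's displayed conclusion in that clause appears to repeat $\ge 0$, but strict positivity is what Theorem~\ref{teo:comparison} requires); your observation that $E\strictlyincluded G_i$ furnishes the positive gap $\dist_\psi(\overline E,\p G_i)$ is the correct way to obtain it.
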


\begin{proof}
Since $E_i^c\subseteq  G_i^c\cup G_j^c,$ the assertion follows from the relation  
$$
A\subseteq  B\qquad \Longrightarrow \qquad \sdist_\psi(\cdot,\p A) \ge \sdist_\psi(\cdot,\p B)\,\,\text{a.e. in $\R^n.$}
$$
\end{proof}

\begin{lemma}\label{lem:minimal_minimizer}
Given $\cA\in\P_b(N+1),$  let $\cA(\lambda)$ minimize $\func(\cdot,\cA,\lambda)$ with ${\bf H}=0.$ For $i\in \{1,\ldots,N+1\}$ let $E\in BV(\R^n;\{0,1\})$ be such that $E\subseteq  A_i;$ in case $i=N+1$ we assume also that $E^c$ is bounded. Then there exists a minimizer $E_i(\lambda)$ of $\functwo(\cdot,E,\lambda)$ such that $E_i(\lambda)\subseteq  A_i(\lambda).$ 
\end{lemma}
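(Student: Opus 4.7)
The plan is to adapt the proof of Theorem~\ref{teo:comparison}. Since Lemma~\ref{lem:sdist_property} only yields the non-strict inequality $2g'_i-g_i+g_j\ge 0$, we cannot invoke Theorem~\ref{teo:comparison} directly to force $F\subseteq A_i(\lambda)$. Instead, I will show that for any minimizer $F$ of $\functwo(\cdot,E,\lambda)$ the intersection $F\cap A_i(\lambda)$ is again a minimizer, and then set $E_i(\lambda):=F\cap A_i(\lambda)$.

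First, the existence of a minimizer $F$ of $\functwo(\cdot,E,\lambda)$ follows from the direct method exactly as in Step~1 of the proof of Theorem~\ref{teo:existence_GMM2f}: if $i\le N$ then $E$ is bounded and a bounded minimizer can be found inside the closed convex hull of $E\cup B_R$, while if $i=N+1$ the identity $\functwo(F,E,\lambda)=\functwo(F^c,E^c,\lambda)$ reduces to the previous case and yields a minimizer $F$ with $F^c$ bounded. Define the competitor $\cA''\in \P_b(N+1)$ by $A''_i:=A_i(\lambda)\cup F$ and $A''_j:=A_j(\lambda)\setminus F$ for $j\ne i$; membership in $\P_b(N+1)$ is immediate in both cases.

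The core step is to sum the minimality inequalities for $F$ (with coefficient~$2$) and for $\cA(\lambda)$ (with coefficient~$1$), obtaining
\[
2\functwo(F,E,\lambda)+\func(\cA(\lambda),\cA,\lambda)\le 2\functwo(F\cap A_i(\lambda),E,\lambda)+\func(\cA'',\cA,\lambda),
\]
and to estimate the perimeter and distance contributions separately. Combining the submodularity \eqref{famfor} applied to $(A_i(\lambda),F)$ with the inequality \eqref{eq:compij} (used with $\cG=\cA(\lambda)$ and with $E$ there being our $F$) yields
\[
P_\phi(A_i(\lambda)\cup F)+\sum_{j\ne i}P_\phi(A_j(\lambda)\setminus F)+2P_\phi(F\cap A_i(\lambda))\le 2P_\phi(F)+\sum_{j=1}^{N+1} P_\phi(A_j(\lambda)),
\]
so the perimeter on the right-hand side does not exceed the one on the left. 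For the distance terms, expanding the integrals via the signed-distance identities stated just after the definition of $\bar\sigma_\psi$ (with the sign convention for the $(N+1)$-th component handled by the second identity), a computation parallel to the one in the proof of Theorem~\ref{teo:comparison} gives
\[
(\text{RHS dist})-(\text{LHS dist})=-\sum_{j\ne i}\int_{A_j(\lambda)\cap F}\bigl(2g'_i-g_i+g_j\bigr)\,dx,
\]
where $g_j:=\lambda\sdist_\psi(\cdot,\p A_j)$ and $g'_i:=\lambda\sdist_\psi(\cdot,\p E)$. Since $E\subseteq A_i$ (equivalently $A_{N+1}^c\subseteq E^c$ when $i=N+1$), Lemma~\ref{lem:sdist_property} gives $2g'_i-g_i+g_j\ge 0$ a.e.\ in $\R^n$, so this difference is $\le 0$ as well.

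Putting the two estimates together, the right-hand side of the weighted-sum inequality is in total no larger than the left-hand side; but the reverse inequality also holds because both minimalities enter with nonnegative coefficients, and thus equality must prevail throughout. Using $\func(\cA'',\cA,\lambda)\ge \func(\cA(\lambda),\cA,\lambda)$, this equality forces $\functwo(F\cap A_i(\lambda),E,\lambda)\le \functwo(F,E,\lambda)$, and the minimality of $F$ gives the opposite inequality, so $E_i(\lambda):=F\cap A_i(\lambda)\subseteq A_i(\lambda)$ is the desired minimizer of $\functwo(\cdot,E,\lambda)$. The main obstacle is precisely the absence of a strict sign in Lemma~\ref{lem:sdist_property}: passing to the intersection $F\cap A_i(\lambda)$ is the trick that replaces the strict positivity assumption of Theorem~\ref{teo:comparison}.
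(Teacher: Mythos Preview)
Your proof is correct, and it takes a genuinely different route from the paper's argument. The paper proceeds by approximation: it first treats the case $E=A_i$ by choosing a sequence $E_k\strictlyincluded A_i$ with $\bigcup_k E_k=A_i$, so that Lemma~\ref{lem:sdist_property} gives the \emph{strict} inequality $2\sdist_\psi(\cdot,\p E_k)-\sdist_\psi(\cdot,\p A_i)+\sdist_\psi(\cdot,\p A_j)>0$ a.e., and then applies Theorem~\ref{teo:comparison} directly to obtain $E_k(\lambda)\subseteq A_i(\lambda)$. The union $\bigcup_k E_k(\lambda)$ is identified with the minimal minimizer of $\functwo(\cdot,A_i,\lambda)$ via \cite{ChMP:2015}, and the general case $E\subseteq A_i$ follows by monotonicity of minimal minimizers, again from \cite{ChMP:2015}.

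Your approach sidesteps both the approximation and the external reference: by summing the two minimality inequalities with weights $2$ and $1$, using \eqref{famfor} together with \eqref{eq:compij} for the perimeter part, and using the non-strict inequality from Lemma~\ref{lem:sdist_property} for the distance part, you obtain equality in the weighted sum and conclude that $F\cap A_i(\lambda)$ is itself a minimizer of $\functwo(\cdot,E,\lambda)$. This is more elementary and self-contained. The paper's argument, on the other hand, produces specifically the \emph{minimal} minimizer of $\functwo(\cdot,E,\lambda)$ and shows it lies inside $A_i(\lambda)$; this is a marginally stronger output, though the lemma as stated only asks for existence of some minimizer with the inclusion property, which your construction already delivers.
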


\begin{proof}
First we assume that $E=A_i.$
Let $E_1\strictlyincluded E_2\strictlyincluded \ldots\strictlyincluded A_i$ be sets of finite perimeter such that $A_i = \bigcup\limits_k E_k$ and $\sdist(\cdot,\p E_k) \to \sdist(\cdot,\p A_i)$ a.e. as $k\to + \infty.$  Let $E_k(\lambda)$ be a  minimizer of $\functwo(\cdot,E_k,\lambda).$ By \cite{ChMP:2015}, $E_1(\lambda)\subseteq E_2(\lambda) \ldots$ and $ E(\lambda)_*:=\bigcup\limits_{k} E_k(\lambda)$ is the minimal minimizer of $\functwo(\cdot,A_i,\lambda).$ Since $E_k\strictlyincluded A_i,$ by Lemma \ref{lem:sdist_property},
$$
2\sdist(\cdot,\p E_k) - \sdist(\cdot,\p A_i) +\sdist(\cdot,\p A_j)>0\quad \text{a.e. in $\R^n$ for all $j\ne i.$}
$$
Thus, by Theorem \ref{teo:comparison}, $E_k(\lambda)\subseteq  A_i(\lambda).$ Hence, we get $E(\lambda)_*\subseteq  A_i(\lambda).$

In the general case, we consider the minimal minimizer $E(\lambda)_*$ of $\functwo(\cdot,E,\lambda)$ and the minimal minimizer $A_i(\lambda)_*$ of $\functwo(\cdot,A_i,\lambda).$ Since $E\subseteq A_i, $ by \cite{ChMP:2015}, $E(\lambda)_*\subseteq  A_i(\lambda)_*.$ Hence, $E_i(\lambda)=E(\lambda)_*$ satisfies the assertion of the lemma.
\end{proof}

\begin{proof}[Proof of Theorem \ref{teo:existence_GMM_half}]
Given $\cG\in \P_b(N+1)$ define $\{\cG(\lambda,k)\}_{\lambda\ge1,k\in\N_0}$ as follows: $\cG(\lambda,0) = \cG$ and 
$$
\cG(\lambda,k) \in\argmin \func(\cdot,\cG(\lambda,k-1),\lambda),\qquad k\ge1.
$$
Note that the map 
$ 
k\in\N_0\mapsto \Per_\Phi(\cG(\lambda,k))
$ 
is nonincreasing. In particular, 
\begin{equation}\label{kimdir_eshik_ochar}
\Per_\Phi(\cG(\lambda,k)) \le \Per_\Phi(\cG). 
\end{equation}

For any $i\in \{1,\ldots,N+1\}$ and $k\ge0,$ let $\{\twofase{i}{k}{\lambda}{l} \}_{l\ge k}$ be defined as follows: $\twofase{i}{k}{\lambda}{k} :=G_i(\lambda,k)$ and $\twofase{i}{k}{\lambda}{l} $ is the minimal minimizer of $\functwo(\cdot,\twofase{i}{k}{\lambda}{l-1},\lambda)$ for $l>k.$  
Notice that,
according to Step 2 of the proof of Theorem \ref{teo:existence_GMM}, our
actual initial set $\twofase{i}{k}{\lambda}{k}=G_i(\lambda,k)$ satisfies 
the density estimates \eqref{eq:vol.density_est}-\eqref{eq:per.density_est} for all radii $r\le O(1/\lambda)$ and,
according to the proof of Step 4 of Theorem \ref{teo:existence_GMM2f},
all $\twofase{i}{k}{\lambda}{l},$ $l>k,$ satisfy the density estimates \eqref{volume_dens_two}-\eqref{perimeter_dens_two} for all radii $r\le O(1/\lambda^2).$ Moreover, since the initial set $\twofase{i}{k}{\lambda}{k}$ also depends on $\lambda,$ we cannot use the arguments of the $\frac{1}{n+1}$-H\"older continuity up to time $0$ in the proof of Theorem \ref{teo:existence_GMM}.  
\black 

For shortness we call $\{\twofase{i}{k}{\lambda}{l}\}_{l\ge k}$ a discrete solution starting from $\twofase{i}{k}{\lambda}{k}=G_i(\lambda,k).$   
Applying Lemma \ref{lem:minimal_minimizer} inductively one can show that 
$$
\twofase{i}{k}{\lambda}{l} \subseteq  G_i(\lambda,l),\qquad l\ge k.
$$
In particular, 
$$
G_i(\lambda,l) = \Big(\bigcup\limits_{j\ne i} G_j(\lambda,l)\Big)^c = \bigcap\limits_{j\ne i} G_j(\lambda,l)^c \subseteq \bigcap\limits_{j\ne i} \twofase{j}{k}{\lambda}{l} ^c. 
$$
Hence, using $\twofase{i}{k}{\lambda}{k} :=G_i(\lambda,k)$ for all $i=1,\ldots,N+1,$ we get 
$$
G_i(\lambda,l) \setminus G_i(\lambda,k) \subseteq  \Big(\bigcap\limits_{j\ne i} \twofase{j}{k}{\lambda}{l} ^c \Big) \cap \Big(\bigcup\limits_{j\ne i} \twofase{j}{k}{\lambda}{l}  \Big) \subseteq  \bigcup\limits_{j\ne i} \big(\twofase{j}{k}{\lambda}{k} \setminus  \twofase{j}{k}{\lambda}{l} \big).
$$
On the other hand, 
$$
G_i(\lambda,k) \setminus G_i(\lambda,l) =  \twofase{i}{k}{\lambda}{k}  \setminus G_i(\lambda,l) \subseteq \twofase{i}{k}{\lambda}{k}  \setminus \twofase{i}{k}{\lambda}{l}, 
$$
hence, 
\begin{equation}\label{firs_esrt}
|\cG(\lambda,k) \Delta \cG(\lambda,l)| \le \sum\limits_{i=1}^{N+1} |\twofase{i}{k}{\lambda}{k}  \setminus \twofase{i}{k}{\lambda}{l}|, 
\end{equation}
which is the inequality that will allow  us to get the $1/2$-H\"olderianity of GMM.

Fix $i\in\{1,\ldots,N+1\}$ and choose arbitrary $t>s'>s>0.$ Let $\{\twofase{i}{[\lambda s]}{\lambda}{l}\}_{l\ge[\lambda s]}$ be a discrete solution starting from $\twofase{i}{[\lambda s]}{\lambda}{[\lambda s]} =G_i(\lambda,[\lambda s]).$ Then for any $\lambda>\frac{5}{s'-s} +\frac{5}{t-s'}+ \frac{5}{s}$ we have
\begin{equation}\label{kiyiknoma}
\begin{aligned} 
|\twofase{i}{[\lambda s]}{\lambda}{[\lambda s]}) \setminus \twofase{i}{[\lambda s]}{\lambda}{[\lambda t]} |  \le   \sum\limits_{l=[\lambda s]+1}^{[\lambda s']}  |\twofase{i}{[\lambda s]}{\lambda}{l}  \Delta \twofase{i}{[\lambda s]}{\lambda}{l-1}|  \\
+ \sum\limits_{l=[\lambda s']+1}^{[\lambda t]} | \twofase{i}{[\lambda s]}{\lambda}{l} \Delta \twofase{i}{[\lambda s]}{\lambda}{l-1}|=:I_1+I_2. 
\end{aligned} 
\end{equation}
Note that by the choice of $\lambda,$ we have $[\lambda s'] -[\lambda s]\ge4,$ $[\lambda t] -[\lambda s']\ge4$ and $[\lambda s]\ge4.$
According to Step 4 of the proof of Theorem \ref{teo:existence_GMM2f}, $\twofase{i}{[\lambda s]}{\lambda}{l},$ $l\ge[\lambda s]\ge4$ satisfies the uniform lower perimeter density estimate 
$$
C_4 \le \frac{P(\twofase{i}{[\lambda s]}{\lambda}{l},B_r(x))}{r^{n-1}},\qquad x\in\p \twofase{i}{[\lambda s]}{\lambda}{l},\quad r\in(0,C_3\lambda^{-1/2}),
$$
provided $\lambda>C_5.$ Hence, from \eqref{avvalgilarga_oxshamas},
$$
I_2 \le  \Big(C_6  |t-s'|^{1/2} + \frac{C_6-\frac{1}{2c_\Psi}}{\lambda|t-s'|^{1/2}}\Big)\,P_\phi(G_i(\lambda,[\lambda s])).
$$
 Since $\cG(\lambda,[\lambda s])$ minimizes $\func(\cdot,\cG(\lambda,[\lambda s]-1),\lambda),$ by Step 3 of the proof of Theorem \ref{teo:existence_GMM}, see in particular \eqref{youtube_tarmogi} and \eqref{eq:per.density_est},
\begin{equation}\label{daffafaf}
c^\Phi(N,n) \le \frac{P(G_i(\lambda,[\lambda s]),B_r(x))}{r^{n-1}},\quad x\in\p G_i(\lambda,[\lambda s]),\quad r\in\Big(0, \frac{C(n,N,p,\Phi,\Psi)}{\lambda}\Big).
\end{equation}
Because of the presence of $1/\lambda$ (instead of $1/\lambda^{1/2}$) in \eqref{daffafaf}, in general we cannot use \eqref{avvalgilarga_oxshamas}. To estimate $I_1$ we proceed as in the proof of \eqref{eq:muhim_holder} and get 
$$
I_1 \le \Big({\rm C}\,|s'-s|^{\frac{1}{n+1}} + \frac{\tilde {\rm C}}{\lambda |s'-s|^{\frac{n}{n+1}}}\Big)\,P_\phi(G_i(\lambda,[\lambda s])). 
$$
From the estimates for $I_1$ and $I_2,$ and \eqref{firs_esrt},\eqref{kiyiknoma} and \eqref{kimdir_eshik_ochar} we obtain 
\begin{align}
& |\cG(\lambda,[\lambda s]) \Delta \cG(\lambda,[\lambda t])| \le   \Big({\rm C}\,|s'-s|^{\frac{1}{n+1}} + \frac{\tilde {\rm C}}{\lambda |s'-s|^{\frac{n}{n+1}}}\Big)\,\Per_\Phi(\cG(\lambda,[\lambda s]))\no \\
& +\Big(C_6 |t-s'|^{1/2} + \frac{C_6-\frac{1}{2c_\Psi}}{\lambda|t-s'|^{1/2}}\Big)\,\Per_\Phi(\cG(\lambda,[\lambda s])) \label{buni_umr_derlar}\\
\le  & \Big({\rm C}\,|s'-s|^{\frac{1}{n+1}} + \frac{\tilde {\rm C}}{\lambda |s'-s|^{\frac{n}{n+1}}} +  C_6 |t-s'|^{1/2} + \frac{C_6-\frac{1}{2c_\Psi}}{\lambda|t-s'|^{1/2}}\Big)\Per_\Phi(\cG).\no
\end{align}

Now if $\cM\in  GMM(\func,\cG),$  there exists $\lambda_k\to+\infty$ for which 
$$
\lim\limits_{k\to\infty} |\cG(\lambda_k,[\lambda_kt])\Delta \cM(t)|=0 \qquad\text{for any $t\ge0.$}
$$
Thus, from \eqref{buni_umr_derlar} we get 
$$
|\cM(s) \Delta \cM(t)| \le  \Big({\rm C}\,|s'-s|^{\frac{1}{n+1}} + C_6  |t-s'|^{1/2}\Big)\Per_\Phi(\cG).
$$
Since $s'\in(s,t)$ is arbitrary, letting $s'\searrow s$ we get \eqref{hulder_est3} with $C:=C_6.$

Finally, if $\sum\limits_{j=1}^{N+1} |\overline{G_j}\setminus G_j|=0,$ then for any $t>s>0$ and $\cM\in GMM(\func,\cG)$ we have 
$$
|\cM(t) \Delta\cG| \le |\cM(t)\Delta \cM(s)| + |\cM(s)\Delta \cG| \le C_6\Per_\Phi(\cG)|t-s|^{1/2} + {\rm C}s^{\frac{1}{n+1}},
$$
where in the second inequality we used \eqref{hulder_est3} and \eqref{hulder_est}. Now letting $s\searrow 0$  we get 
$$
|\cM(t) \Delta\cG| \le C_6\Per_\Phi(\cG)\,t^{1/2}.
$$
\end{proof}

From Lemma \ref{lem:minimal_minimizer}  we get the following weak comparison property of GMM. 

\begin{theorem}[\textbf{Comparison}]\label{teo:weak_comparison} 
Let $\Phi=\{\phi,\ldots,\phi\}$ and $\Psi=\{\psi,\ldots,\psi\}$ for some norms $\phi$ and $\psi$ on $\R^n,$ and ${\bf H}\equiv0.$ Given $\cG\in\P_b(N+1),$ let $\cM\in GMM(\func,\cG)$ and given $i\in\{1,\ldots,N+1\},$  let $C\in BV(\R^n;\{0,1\})$ be such that $C\subseteq  G_i;$ in case $i=N+1$ we assume also that $C^c$ is bounded. Then there exists   $N \in GMM(\functwo,C)$ such that $N(t)\subseteq  M_i(t)$ for all $t\ge0.$
\end{theorem}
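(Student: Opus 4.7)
The plan is to build, level by level in the discrete scheme, a two-phase evolution staying inside the prescribed phase of the multiphase evolution, and then pass to the limit using the compactness and H\"older estimates already established.

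Since $\cM\in GMM(\func,\cG)$, fix a sequence $\lambda_h\to+\infty$ and the corresponding iterates $\cG(\lambda_h,k)$ (with $\cG(\lambda_h,0)=\cG$ and each $\cG(\lambda_h,k+1)$ minimizing $\func(\cdot,\cG(\lambda_h,k),\lambda_h)$) such that $\cG(\lambda_h,[\lambda_h t])\to\cM(t)$ in $L^1(\R^n)$ for every $t\ge0$. Next, I would construct inductively a two-phase discrete scheme $\{C(\lambda_h,k)\}_{k\in\N_0}$ starting from $C(\lambda_h,0):=C$ as follows: at step $k$, assuming the inclusion $C(\lambda_h,k)\subseteq G_i(\lambda_h,k)$ (trivial for $k=0$), apply Lemma \ref{lem:minimal_minimizer} with $\cA=\cG(\lambda_h,k)$, $\cA(\lambda_h)=\cG(\lambda_h,k+1)$ and $E=C(\lambda_h,k)$. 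This produces a minimizer $C(\lambda_h,k+1)$ of $\functwo(\cdot,C(\lambda_h,k),\lambda_h)$ with
\[
C(\lambda_h,k+1)\subseteq G_i(\lambda_h,k+1),
\]
which closes the induction. Note that in the case $i=N+1$ the hypothesis that $C^c$ is bounded propagates to all iterates since $G_{N+1}(\lambda_h,k)^c$ is contained in the bounded set $\hull{\cG}$ by \eqref{dddid} (with no $B_R$, as ${\bf H}\equiv0$).

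By construction $\{C(\lambda_h,k)\}_k$ is precisely a discrete minimizing movement for $\functwo$ starting from $C$. In particular, using the uniform containment $C(\lambda_h,k)\subseteq\hull{\cG}$ (respectively $C(\lambda_h,k)^c\subseteq\hull{\cG}$ when $i=N+1$) together with the $1/2$-H\"older estimate \eqref{hulder_est2} in Theorem \ref{teo:existence_GMM2f}, a standard diagonal argument on a countable dense set of times, combined with the equicontinuity in $t$, allows me to extract a further subsequence (not relabeled) and a map $N:[0,+\infty)\to BV(\R^n;\{0,1\})$ with $C(\lambda_h,[\lambda_h t])\to N(t)$ in $L^1(\R^n)$ for every $t\ge0$. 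By the very definition of GMM this map belongs to $GMM(\functwo,C)$.

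Finally, passing to the limit in the pointwise inclusion
\[
C(\lambda_h,[\lambda_h t])\subseteq G_i(\lambda_h,[\lambda_h t]),
\]
which is preserved under $L^1$-convergence of characteristic functions, yields $N(t)\subseteq M_i(t)$ for all $t\ge0$, as desired. The only non-routine ingredient is Lemma \ref{lem:minimal_minimizer}, which relies on the minimal-minimizer machinery and on Theorem \ref{teo:comparison}; once it is available, the main conceptual step is simply the observation that minimal minimizers commute with inclusion in a way compatible with the partitioned scheme, so the inductive construction goes through. The remaining compactness/diagonal argument and the $L^1$-passage to the limit are standard and use nothing beyond what has already been established in Sections \ref{sec:existence_GMM_partition}.
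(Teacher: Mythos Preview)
Your proof is correct and follows essentially the same route as the paper's: both fix the sequence $\lambda_h$ realizing $\cM$, build the two-phase discrete scheme via Lemma~\ref{lem:minimal_minimizer} applied inductively (the paper explicitly takes the minimal minimizer at each step, which is what the lemma actually outputs), obtain $C(\lambda_h,k)\subseteq G_i(\lambda_h,k)$, extract a further subsequence to produce $N\in GMM(\functwo,C)$, and pass the inclusion to the limit. One small remark: your justification for the propagation of the bounded-complement hypothesis when $i=N+1$ is not quite the right one---the inclusion $C(\lambda_h,k)\subseteq G_{N+1}(\lambda_h,k)$ goes the wrong way for that purpose; instead use that $\functwo$ is symmetric under complementation, so Step~1 of Theorem~\ref{teo:existence_GMM2f} applied to $C(\lambda_h,k)^c$ gives $C(\lambda_h,k+1)^c\subseteq\hull{C(\lambda_h,k)^c}$.
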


\begin{proof}
Let $\lambda_h\to+\infty$  be such that 
\begin{equation}\label{lone_conver_part}
\lim\limits_{h\to\infty} |\cG(\lambda_h,[\lambda_ht])\Delta \cM(t)|=0\qquad\text{for all $t\ge0,$} 
\end{equation}
where for any $h$ the sequence $\{\cG(\lambda_h,k)\}_{k\in\N_0}$ is defined as: $\cG(\lambda_h,0) = \cG$ and 
$$
\cG(\lambda_h,k) \in\argmin \func(\cdot,\cG(\lambda_h,k-1),\lambda_h),\qquad k\ge1.
$$
Let  $i\in\{1,\ldots,N+1\}$ and $C\in BV(\R^n;\{0,1\})$ be as in the statement. For any $h$ let $\{G(\lambda_h,k)\}_{k\in\N_0}$ be defined as $G(\lambda_h,0)=C$ and 
$G(\lambda_h,k)$ is the minimal minimizer of $\functwo(\cdot,G(\lambda_h,k-1),\lambda),$ $k\ge1$ (see the proof of Lemma \ref{lem:minimal_minimizer} for the definition).   Applying  Lemma \ref{lem:minimal_minimizer}  inductively we get 
\begin{equation}\label{inclusion_principle}
G(\lambda_h,k)\subseteq  G_i(\lambda_h,k)\qquad\text{ for all $ h\ge1$ and $k\ge0.$} 
\end{equation}
Passing to a further (not relabelled) subsequence if necessary, we assume that there exists $N\in GMM(\functwo,C)$ such that 
\begin{equation}\label{minimal_gmm}
\lim\limits_{h\to\infty} |G(\lambda_h,[\lambda_ht])\Delta N(t)|=0\qquad \text{for all $t\ge0.$} 
\end{equation}
By \eqref{lone_conver_part} we have 
$$
\lim\limits_{h\to\infty} |G_i(\lambda_h,[\lambda_ht])\Delta M_i(t)|=0\qquad\text{for all $t\ge0.$} 
$$
Now \eqref{inclusion_principle} and \eqref{minimal_gmm} imply that  $N(t)\subseteq M_i(t)$ for all $t\ge0$ up to a negligible set.  
\end{proof}

\begin{corollary}\label{cor:time_est}
Under the assumptions of Theorem \ref{teo:weak_comparison} let $\cG\in\P_b(N+1)$  and $\cM\in GMM(\func,\cG).$ Let $C_i\subseteq G_i,$ $i\in\{1,\ldots,N\},$ and  $C_{N+1}\supseteq \hull{\cG}$ be convex sets and let $L_i\in GMM(\functwo,C_i),$ $i\in\{1,\ldots,N+1\}.$ Then for any $\cM\in GMM(\func,\cG)$ 
\begin{equation}\label{persist_time}
L_i(t)\ne \emptyset\quad  \Longrightarrow \quad M_i(t) \ne \emptyset,\qquad i=1,\ldots,N,
\end{equation}
and 
\begin{equation}\label{diappear_time}
L_{N+1}(t)= \emptyset\quad  \Longrightarrow \quad M_{N+1}(t)=\emptyset.
\end{equation}

\end{corollary}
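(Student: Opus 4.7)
The plan is to derive both implications directly from Theorem \ref{teo:weak_comparison}, after noting that a GMM of $\functwo$ starting from a convex initial datum is uniquely determined, so the $L_i$ appearing in the statement necessarily coincides with the particular GMM produced by that theorem.

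I would first record the uniqueness fact: by Lemma \ref{lem:minimal_minimizer} (and the convexity--preserving results of \cite{ChMP:2015} invoked there), the minimal minimizer of $\functwo(\cdot,E,\lambda)$ from a convex set $E$ is the only minimizer, and by a straightforward induction the whole discrete flat flow is rigidly determined; hence any two elements of $GMM(\functwo,C)$ with $C$ convex must coincide.

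For \eqref{persist_time}, I fix $i\in\{1,\ldots,N\}$ and apply Theorem \ref{teo:weak_comparison} with $C:=C_i\subseteq G_i$. This yields some $\widetilde L_i\in GMM(\functwo,C_i)$ with $\widetilde L_i(t)\subseteq M_i(t)$ for every $t\ge 0$. By the uniqueness above, $L_i=\widetilde L_i$, and contraposition of the inclusion gives \eqref{persist_time}. For \eqref{diappear_time} I would use the dual, complementary version of the same comparison: the hypothesis $C_{N+1}\supseteq\hull{\cG}$ is the natural counterpart of $C\subseteq G_i$ for the outer phase, because of the upper envelope inclusion $\bigcup_{j\le N}M_j(t)\subseteq\hull{\cG}$ proved in Theorem \ref{teo:existence_GMM} (with ${\bf H}\equiv 0$). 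Running the proof of Theorem \ref{teo:weak_comparison} in this complementary form delivers an inclusion that, again by uniqueness from the convex initial datum, forces the given $L_{N+1}$ to play the role of $\widetilde L_{N+1}$; contraposition then yields \eqref{diappear_time}.

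The only delicate technical point I anticipate is the joint subsequence extraction, so that $\cG(\lambda_h,[\lambda_h t])\to\cM(t)$ and the discrete two-phase flows starting from $C_i$ converge on the \emph{same} sequence $\{\lambda_h\}$ for every $t$. This is exactly the diagonal argument already executed at the beginning of the proof of Theorem \ref{teo:weak_comparison} (passing to a common subsequence via Proposition \ref{prop:compactness}), which I would simply repeat. Once this is in place, no further geometric work is needed: the corollary is essentially a bookkeeping consequence of Theorem \ref{teo:weak_comparison} plus convex--initial--datum uniqueness.
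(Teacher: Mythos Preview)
Your overall strategy matches the paper's: both derive the corollary from Theorem \ref{teo:weak_comparison} together with the uniqueness of the two-phase GMM starting from a convex set, so that the $L_i$ in the statement necessarily agrees with the particular GMM produced by the comparison theorem.

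However, your justification of the uniqueness step is not correct. Lemma \ref{lem:minimal_minimizer} is a comparison statement between multiphase and two-phase minimizers and says nothing about convexity preservation; the reference \cite{ChMP:2015} is invoked there only for the monotonicity of minimal minimizers with respect to the initial datum, not for any convexity result. Even granting uniqueness of the minimizer at each discrete step, this would not by itself yield uniqueness of the GMM, since different diverging sequences $\{\lambda_h\}$ could in principle produce different limits. The paper instead appeals to \cite{BCChN:2005}, which shows that the anisotropic mean curvature flow with mobility starting from a bounded convex set is uniquely defined and coincides with its GMM; that is the result you actually need here.

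A smaller point on the case $i=N+1$: you need not ``rerun the proof of Theorem \ref{teo:weak_comparison} in complementary form'', because that theorem already covers the index $i=N+1$ directly (with the hypothesis $C\subseteq G_{N+1}$ and $C^c$ bounded). One applies it with $C:=C_{N+1}^c$, which satisfies $C\subseteq G_{N+1}$ since $C_{N+1}\supseteq\hull{\cG}\supseteq G_{N+1}^c$, and then uses the invariance of $\functwo$ under complementation ($P_\phi(G)=P_\phi(G^c)$, $G\Delta E=G^c\Delta E^c$, $\partial E=\partial E^c$) to identify the GMM starting from $C_{N+1}^c$ with the complement of the GMM starting from $C_{N+1}$.
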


\begin{proof}
Recall that anisotropic mean curvature flow with a mobility starting from a bounded convex set $C$ is uniquely defined \cite{BCChN:2005}, coincides with the GMM starting from $C$ and extincts at a finite time $t_C>0.$ By Theorem \ref{teo:weak_comparison}, the $i$-th phase $M_i$ of any  $\cM\in GMM(\func,\cG)$ starting from the $i$-th phase $G_i$ of $\cG$ does not disappear in the time-interval $(0,t_{C_i})$ for any $i\in\{1,\ldots,N\}.$ Analogously, Theorem \ref{teo:weak_comparison} implies that $(N+1)$-th phase of $\cM$ becomes empty, i.e., $\R^n\setminus M_{N+1}(t)=\emptyset$  if $t\ge t_{C_{N+1}}.$ 
\end{proof}

\black

\appendix

\end{document}